\documentclass[reqno,10pt]{amsart}
\usepackage{amscd,amssymb,verbatim,array}
\usepackage{hyperref}
\usepackage{amsmath}
\usepackage[active]{srcltx} 

\setlength{\textwidth}{6.3in}
 \addtolength{\oddsidemargin}{-1.7cm}
\addtolength{\evensidemargin}{-1.7cm}

\numberwithin{equation}{section} \theoremstyle{plain}

\newtheorem{theorem}{Theorem}[section]
\newtheorem{lemma}[theorem]{Lemma}
\newtheorem{proposition}[theorem]{Proposition}

\newtheorem{definition}[theorem]{Definition}

\theoremstyle{definition}

\theoremstyle{remark}
\newtheorem{remark}[theorem]{Remark}

\numberwithin{equation}{section}

\newcommand{\LDet}{\operatorname{LDet}}
\newcommand{\Det}{\operatorname{Det}}
\newcommand{\RS}{\operatorname{RS}}

\newcommand{\an}{\operatorname{an}}
\newcommand{\B}{\mathcal{B}}
\newcommand{\even}{\operatorname{even}}
\newcommand{\trivial}{\operatorname{trivial}}
\newcommand{\odd}{\operatorname{odd}}

\newcommand{\Ker}{\operatorname{Ker}}
\newcommand{\Tr}{\operatorname{Tr}}

\newcommand{\im}{\operatorname{Im}}
\newcommand{\gr}{\operatorname{gr}}
\newcommand{\trs}{T^{\operatorname{RS}}}


\begin{document}

\title{Refined analytic torsion for twisted de Rham complexes}
\author{Rung-Tzung Huang}

\address{Institute of Mathematics, Academia Sinica, Nankang 11529, Taipei, Taiwan}

\email{rthuang@math.sinica.edu.tw}

\subjclass[2010]{Primary: 58J52}
\keywords{determinant, analytic torsion, twisted de Rham complex}

\begin{abstract}
Let $E$ be a flat complex vector bundle over a closed oriented odd dimensional manifold $M$ endowed with a flat connection $\nabla$. The refined analytic torsion for $(M,E)$ was defined and studied by Braverman and Kappeler. Recently Mathai and Wu defined and studied the analytic torsion for the twisted de Rham complex with an odd degree closed differential form $H$, other than one form, as a flux and with coefficients in $E$. In this paper we generalize the construction of the refined analytic torsion to the twisted de Rham complex. We show that the refined analytic torsion of the twisted de Rham complex is independent of the choice of the Riemannian metric on $M$ and the Hermitian metric on $E$. We also show that the twisted refined analytic torsion is invariant (under a natural identification) if $H$ is deformed within its cohomology class. We prove a duality theorem, establishing a relationship between the twisted refined analytic torsion corresponding to a flat connection and its dual. We also define the twisted analogue of the Ray-Singer metric and calculate the twisted Ray-Singer metric of the twisted refined analytic torsion. In particular we show that in case that the Hermtitian connection is flat, the twisted refined analytic torsion is an element with the twisted Ray-Singer norm one.  
\end{abstract}
\maketitle

\section{Introduction}
Let $E$ be a flat complex vector bundle over a closed oriented odd dimensional manifold $M$ endowed with a flat connection $\nabla$. Braverman and Kappeler \cite{BK1,BK3,BK4,BK5,BK6} defined and studied the refined analytic torsion for $(M,E)$, which can be viewed as a refinement of the Ray-Singer torsion \cite{RS} and an analytic analogue of the Farber-Turaev torsion, \cite{FT1,FT2,Tu1,Tu2}. It was shown that the refined analytic torsion is closely related with the Farber-Turaev torsion, \cite{BK1,BK3,BK6,H}.

In \cite{MW,MW1} Mathai and Wu generalize the classical construction of the Ray-Singer torsion to the twisted de Rham complex with an odd degree closed differential form $H$, other than one form, as a flux and with coefficients in $E$. The twisted de Rham complex is the $\mathbb{Z}_2$-graded complex $(\Omega^\bullet(M,E),\nabla^H)$, where $(\Omega^\bullet(M,E)$ is the space of differential forms with coefficients in $E$ and $\nabla^H:=\nabla + H\wedge \cdot$. Its cohomology $H^{\bullet}(M,E,H)$ is called the twisted de Rham cohomology. Mathai and Wu \cite{MW} defined the analytic torsion of the twisted de Rham complex $\tau(M,E,H) \in \Det\big(H^{\bullet}(M,E,H)\big)$ as a ratio of $\zeta$-regularized determinants of partial Laplacians, multiplied by the ratio of volume elements of the cohomology groups. They showed that when $\dim M$ is odd, $\tau(M,E,H)$ is independent of the choice of the Riemannian metric on $M$ and the Hermitian metric on $E$. They also showed that the torsion $\tau(M,E,H)$ is invariant (under a natural identification) if $H$ is deformed within its cohomology class and discussed its connection with the generalized geometry \cite{Gu2}. 

In this paper we define the refined analytic torsion for the twisted de Rham complex $\rho_{\operatorname{an}}(\nabla^H)\in \Det \big( H^{\bullet} (M,E,H)\big)$. We show that the twisted refined analytic torsion $\rho_{\operatorname{an}}(\nabla^H)$ is independent of the choice of the Riemannian metric on $M$ and the Hermitian metric on $E$. We then show that the torsion $\rho_{\operatorname{an}}(\nabla^H)$ is invariant (under a natural identification) if $H$ is deformed within its cohomology class. We also establish a duality theorem, establishing a relationship between the twisted refined analytic torsion corresponding to a flat connection and its dual, which is a twisted analogue of Theorem 10.3 of \cite{BK3}. In the end we define the twisted analogue of the Ray-Singer metric and then calculate the twisted Ray-Singer norm of the twisted refined analytic torsion. In particular we show that in case of flat Hermtitian metric, the twisted refined analytic torsion is a canonical choice of an element with the twisted Ray-Singer norm one. 

The paper is organized as follows. In Section \ref{S:rtrev}, we review some standard materials about determinant lines of a $\mathbb{Z}_2$-graded finite dimensional complex. Then we define and calculate the refined torsion of the $\mathbb{Z}_2$-graded finite dimensional complex with a chirality operator. In Section \ref{S:gdtoso}, we define the graded determinant of the twisted version of the odd signature operator of a flat vector bundle $E$ over a closed oriented odd dimensional manifold $M$. We use this graded determinant to define a canonical element $\rho_H$ of the determinant line of the twisted de Rham cohomology of the vector bundle $E$. We study the relationship between this graded determinant and the $\eta$-invariant of the twisted odd signature operator. In Section \ref{S:madrat}, we first study the metric dependence of the canonical element $\rho_H$ and then use this element to construct the refined analytic torsion twisted by the flux form $H$. We show that the twisted refined analytic torsion is independent of the metric $g^M$ and the representative $H$ in the cohomology class $[H]$. In Section \ref{S:dttrat}, we first review the concept of the dual of a complex and construct a natural isomorphism between the determinant lines of a $\mathbb{Z}_2$-graded complex and its dual. We then establish a relationship between the twisted refined analytic torsion corresponding to a flat connection and that of its dual. In Section \ref{S:cwtat}, we first define the twisted Ray-Singer metric and then calculate the twisted Ray-Singer norm of the twisted refined analytic torsion.

Throughout this paper, the bar over an integer means taking the value modulo $2$.

\subsection*{Acknowledgement}
The author would like to thank Maxim Braverman for suggesting this problem.

\section{The refined torsion of a $\mathbb{Z}_2$-graded finite dimensional complex with a chirality operator}\label{S:rtrev}

In this section we first review some standard materials about determinant lines of a $\mathbb{Z}_2$-graded finite dimensional complex. Then we define and calculate the refined torsion of the $\mathbb{Z}_2$-graded finite dimensional complex with a chirality operator. The contents are $\mathbb{Z}_2$-graded analogues of Section 2, Section 4 and Section 5 of \cite{BK3}.
Throughout this section $\bf{k}$ is a field of characteristic zero.
\subsection{The determinant line of a $\mathbb{Z}_2$-graded finite dimensional complex}
Given a $\bf{k}$-vector space $V$ of dimension $n$, the determinant line of $V$ is the line $\Det(V):=\wedge^nV$, where $\wedge^nV$ denotes the $n$-th exterior power of $V$. By definition, we set $\operatorname{Det}(0):=\bf{k}$. Further, we denote by $\Det(V)^{-1}$ the dual line of $\Det(V)$.  
Let 
\begin{equation}\label{E:utdetline} 
  0 \stackrel{}{\longrightarrow} \ C^0 \ \stackrel{\partial_0}{\longrightarrow}\ C^1\ \stackrel{\partial_1}{\longrightarrow} \ \cdots \stackrel{\partial_{m-1}}{\longrightarrow}\ \ C^m \  \stackrel{\partial_m}{\longrightarrow} 0
\end{equation}
be an odd length, i.e. $m=2r-1$ being a positive odd integer, cochain complex of finite dimensional $\bf{k}$-vector spaces. Set 
$$C^{\bar{0}}=C^{\operatorname{even}}=\bigoplus_{i=0}^{r-1} C^{2i}, \qquad C^{\bar{1}}=C^{\operatorname{odd}}=\bigoplus_{i=0}^{r-1} C^{2i+1}.$$
Let 
\begin{equation}\label{E:detline} 
 (C^\bullet,d) \  : \  \cdots \stackrel{d_{\bar{1}}}{\longrightarrow} \ C^{\bar{0}} \ \stackrel{d_{\bar{0}}}{\longrightarrow}\ C^{\bar{1}}\ \stackrel{d_{\bar{1}}}{\longrightarrow} \ C^{\bar{0}} \ \stackrel{d_{\bar{0}}}{\longrightarrow} \cdots
\end{equation}
be a $\mathbb{Z}_2$-graded cochain complex of finite dimensional $\bf{k}$-vector spaces. For example, we can choose $d_{\bar{k}}=\sum_{i, i=k\, \text{mod}\, 2}\partial_i$. Denote by $H^{\bar{k}}(d_{\bar{k}}), (k=0,1)$ its cohomology. Set
\begin{equation}
\Det(C^\bullet)\,:=\,\Det \big( C^{\bar{0}} \big) \otimes \Det \big( C^{\bar{1}} \big)^{-1}, \qquad \Det(H^\bullet(d))\,:=\, \Det\big( H^{\bar{0}}(d_{\bar{0}})\big) \otimes \Det\big( H^{\bar{1}}(d_{\bar{1}})\big)^{-1}.
\end{equation}

\subsection{The fusion isomorphisms}(cf. \cite[Subsection 2.3]{BK3})
For two finite dimensional $\bf{k}$-vector spaces $V$ and $W$, we denote by $\mu_{V,W}$ the canonical fusion isomorphism,
\begin{equation}\label{E:fusio}
\mu_{V,W} : \Det(V) \otimes \Det(W) \to \Det(V \oplus W). 
\end{equation}
For $v \in \Det(V)$, $w \in \Det(W)$, we have
\begin{equation}\label{E:fusio1}
\mu_{V,W}(v \otimes w)= (-1)^{\dim V \cdot \dim W} \mu_{W,V}(w \otimes v).
\end{equation}
By a slight abuse of notation, denote by $\mu_{V,W}^{-1}$ the transpose of the inverse of $\mu_{V,W}$.

Similarly, if $V_1, \cdots, V_r$ are finite dimensional $\bf{k}$-vector spaces, we define an isomorphism
\begin{equation}\label{E:mulfus}
\mu_{V_1,\cdots, V_r}: \Det(V_1) \otimes \cdots \otimes \Det(V_r) \to \Det(V_1 \oplus \cdots \oplus V_r).
\end{equation}

\subsection{The isomorphism between the determinant lines of a $\mathbb{Z}_2$-graded complex and its cohomology }\label{SS:isomo}
For $k=0,1$, fix a direct sum decomposition
\begin{equation}\label{E:decompbha}
C^{\bar{k}}=B^{\bar{k}} \oplus H^{\bar{k}} \oplus A^{\bar{k}},
\end{equation}
such that $B^{\bar{k}}\oplus H^{\bar{k}}=(\Ker d_{\bar{k}}) \cap C^{\bar{k}}$ and $B^{\bar{k}}= d_{\overline{k+1}}\big(C^{\overline{k+1}}\big)=d_{\overline{k+1}}\big(A^{\overline{k+1}}\big)$. Then $H^{\bar{k}}$ is naturally isomorphic to the cohomology $H^{\bar{k}}(d_{\bar k})$ and $d_{\bar{k}}$ defines an isomorphism $d_{\bar{k}}: A^{\bar{k}} \to B^{\overline{k+1}}$.

Fix $c_{\bar{k}} \in \Det(C^{\bar{k}})$ and $a_{\bar{k}} \in \Det(A^{\bar{k}})$. Let $d_{\bar{k}}(a_{\bar{k}}) \in \Det(B^{\overline{k+1}})$ denote the image of $a_{\bar{k}}$ under the map $\Det(A^{\bar{k}}) \to \Det(B^{\overline{k+1}})$ induced by the isomorphism $d_{\bar{k}}:A^{\bar{k}} \to B^{\overline{k+1}}$. Then there is a unique element $h_{\bar{k}} \in \Det(H^{\bar{k}})$ such that
\begin{equation}\label{E:cbahce}
c_{\bar{k}}= \mu_{B^{\bar{k}},H^{\bar{k}},A^{\bar{k}}}\big(d_{\overline{k+1}}(a_{\overline{k+1}})\otimes h_{\bar{k}} \otimes a_{\bar{k}}\big),
\end{equation}
where $\mu_{B^{\bar{k}},H^{\bar{k}},A^{\bar{k}}}$ is the fusion isomorphism, cf. \eqref{E:mulfus}, see also \cite[Subsection 2.3]{BK3}.

Define the canonical isomorphism
\begin{equation}\label{E:isomorphism}
\phi_{C^\bullet}=\phi_{(C^{\bullet},d)} \,:\,\Det(C^{\bullet} ) \longrightarrow \Det(H^{\bullet}(d)),
\end{equation}
by the formula
\begin{equation}\label{E:phic}
\phi_{C^\bullet}: c_{\bar{0}} \otimes c_{\bar{1}}^{-1} \mapsto (-1)^{\mathcal{N}(C^{\bullet})} h_{\bar{0}} \otimes h_{\bar{1}}^{-1}, 
\end{equation}
where 
\begin{equation}\label{E:ncbullet}
\mathcal{N}(C^\bullet) \, := \, \frac{1}{2} \sum_{k=0,1} \dim A^{\bar k} \cdot \big(\dim A^{\bar k}+(-1)^{k+1}\big).
\end{equation}

\subsection{The fusion isomorphism for $\mathbb{Z}_2$-graded complexes}
Let $C^\bullet=C^{\bar 0} \oplus C^{\bar 1}$ and $\widetilde{C}^\bullet=\widetilde{C}^{\bar 0} \oplus \widetilde{C}^{\bar 1}$ be finite dimensional $\mathbb{Z}_2$-graded $\bf{k}$-vector spaces. The fusion isomorphism 
\[
\mu_{C^\bullet ,\widetilde{C}^{\bullet}} : \Det(C^{\bullet}) \otimes \Det(\widetilde{C}^{\bullet}) \to \Det(C^{\bullet} \oplus \widetilde{C}^{\bullet}),
\]
is defined by the formula
\begin{equation}\label{E:grafus}
\mu_{C^{\bullet},\widetilde{C}^{\bullet}} := (-1)^{\mathcal{M}(C^{\bullet},\widetilde{C}^{\bullet})}  \mu_{C^{\bar 0},\widetilde{C}^{\bar 0}} \otimes \mu_{C^{\bar 1},\widetilde{C}^{\bar 1}}^{-1},
\end{equation}
where 
\begin{equation}\label{E:mvw123}
\mathcal{M}(C^{ \bullet},\widetilde{C}^{ \bullet}) := \dim C^{\bar 1} \cdot \dim \widetilde{C}^{\bar 0}.
\end{equation}

The following lemma is a $\mathbb{Z}_2$-graded analogue of \cite[Lemma 2.7]{BK3} and \cite[Lemma 2.4]{FT2}. The proof is a slight modification of the proof of \cite[Lemma 2.7]{BK3}.

\begin{lemma}\label{L:commu}
Let $(\,C^\bullet, d\,)$ and $(\,\widetilde{C}^\bullet, \widetilde{d}\,)$ be $\mathbb{Z}_2$-graded complexes with finite dimensional $\bf{k}$-vector spaces. Further, assume that the Euler characteristics $\chi(C^\bullet)=\chi(\widetilde{C}^\bullet)=0$. Then the following diagram commutes:
\begin{equation}\label{E:comm}
\begin{CD}
\Det(C^\bullet) \otimes \Det(\widetilde{C}^\bullet)     & @>\phi_{C^\bullet}\otimes \phi_{\widetilde{C}^\bullet}>> & \Det\big(H^\bullet(d)\big) \otimes \Det\big(\, H^\bullet(\widetilde{d})\,\big)     \\
  @V \mu_{C^\bullet,\widetilde{C}^\bullet} VV &        & @VV  \mu_{H^\bullet(d),H^\bullet(\widetilde{d})} V   \\
  \Det(C^\bullet \oplus \widetilde{C}^\bullet)     & @> \phi_{C^\bullet \oplus \widetilde{C}^\bullet} >> & \Det\big(\, H^\bullet(d \oplus \widetilde{d})\, \big) \cong \Det\big( \, H^\bullet(d) \oplus H^\bullet(\widetilde{d})\, \big)
\end{CD}
\end{equation}
\end{lemma}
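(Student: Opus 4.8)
The plan is to verify commutativity directly, by evaluating both composite maps on a single generator of the one-dimensional line $\Det(C^\bullet)\otimes\Det(\widetilde{C}^\bullet)$. Fix direct sum decompositions $C^{\bar k}=B^{\bar k}\oplus H^{\bar k}\oplus A^{\bar k}$ and $\widetilde{C}^{\bar k}=\widetilde{B}^{\bar k}\oplus\widetilde{H}^{\bar k}\oplus\widetilde{A}^{\bar k}$ as in Subsection~\ref{SS:isomo}, and equip $C^\bullet\oplus\widetilde{C}^\bullet$ with the induced decomposition $(C\oplus\widetilde{C})^{\bar k}=(B^{\bar k}\oplus\widetilde{B}^{\bar k})\oplus(H^{\bar k}\oplus\widetilde{H}^{\bar k})\oplus(A^{\bar k}\oplus\widetilde{A}^{\bar k})$ and with the elements $a^{\oplus}_{\bar k}:=\mu_{A^{\bar k},\widetilde{A}^{\bar k}}(a_{\bar k}\otimes\widetilde{a}_{\bar k})$. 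Since the source is one-dimensional it suffices to check the identity on $c_{\bar 0}\otimes c_{\bar 1}^{-1}\otimes\widetilde{c}_{\bar 0}\otimes\widetilde{c}_{\bar 1}^{-1}$, where $c_{\bar k}$ and $\widetilde{c}_{\bar k}$ are the decomposed generators given by \eqref{E:cbahce}. Write $\alpha_k=\dim A^{\bar k}$, $\eta_k=\dim H^{\bar k}$, $\beta_k=\dim B^{\bar k}$, and similarly with tildes; recall the structural relations $\beta_k=\alpha_{k+1}$, $\widetilde{\beta}_k=\widetilde{\alpha}_{k+1}$ (indices mod $2$) coming from the isomorphisms $d_{\bar k}\colon A^{\bar k}\to B^{\overline{k+1}}$, and that $H^\bullet(d\oplus\widetilde{d})=H^\bullet(d)\oplus H^\bullet(\widetilde{d})$ canonically, so that the right-hand column of the diagram makes sense.

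First I would chase the generator along the upper-right path: $\phi_{C^\bullet}\otimes\phi_{\widetilde{C}^\bullet}$ contributes the sign $(-1)^{\mathcal{N}(C^\bullet)+\mathcal{N}(\widetilde{C}^\bullet)}$ by \eqref{E:phic}, and then $\mu_{H^\bullet(d),H^\bullet(\widetilde{d})}$ contributes the further sign $(-1)^{\eta_1\widetilde{\eta}_0}$ from \eqref{E:grafus}--\eqref{E:mvw123} and fuses the cohomology generators into $h^{\oplus}_{\bar k}:=\mu_{H^{\bar k},\widetilde{H}^{\bar k}}(h_{\bar k}\otimes\widetilde{h}_{\bar k})$. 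Next I would chase along the lower-left path: $\mu_{C^\bullet,\widetilde{C}^\bullet}$ contributes $(-1)^{\mathcal{M}(C^\bullet,\widetilde{C}^\bullet)}$ with $\mathcal{M}(C^\bullet,\widetilde{C}^\bullet)=(\alpha_0+\eta_1+\alpha_1)(\widetilde{\alpha}_1+\widetilde{\eta}_0+\widetilde{\alpha}_0)$, and fuses $c_{\bar k}$ with $\widetilde{c}_{\bar k}$; the point is then to re-express the fused element in the shape \eqref{E:cbahce} for the complex $C^\bullet\oplus\widetilde{C}^\bullet$. For this I would use the associativity of the multi-fold fusion isomorphisms \eqref{E:mulfus} together with the compatibility $(d\oplus\widetilde{d})\circ\mu_{A^{\overline{k+1}},\widetilde{A}^{\overline{k+1}}}=\mu_{B^{\bar k},\widetilde{B}^{\bar k}}\circ(d_{\overline{k+1}}\otimes\widetilde{d}_{\overline{k+1}})$ of the connecting maps with fusion; the only discrepancy is the reordering of the six tensor factors $(B^{\bar k},H^{\bar k},A^{\bar k},\widetilde{B}^{\bar k},\widetilde{H}^{\bar k},\widetilde{A}^{\bar k})$ into $(B^{\bar k},\widetilde{B}^{\bar k},H^{\bar k},\widetilde{H}^{\bar k},A^{\bar k},\widetilde{A}^{\bar k})$, which carries the Koszul sign $(-1)^{\widetilde{\beta}_k(\alpha_k+\eta_k)+\widetilde{\eta}_k\alpha_k}$ by \eqref{E:fusio1}. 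Consequently the element $h^{\oplus\oplus}_{\bar k}\in\Det(H^{\bar k}\oplus\widetilde{H}^{\bar k})$ produced by the decomposition of $C^\bullet\oplus\widetilde{C}^\bullet$ satisfies $h^{\oplus\oplus}_{\bar k}=(-1)^{\widetilde{\beta}_k(\alpha_k+\eta_k)+\widetilde{\eta}_k\alpha_k}h^{\oplus}_{\bar k}$, and finally $\phi_{C^\bullet\oplus\widetilde{C}^\bullet}$ contributes $(-1)^{\mathcal{N}(C^\bullet\oplus\widetilde{C}^\bullet)}$.

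Comparing the two paths, commutativity becomes a single congruence mod $2$. From \eqref{E:ncbullet} and $\dim(A^{\bar k}\oplus\widetilde{A}^{\bar k})=\alpha_k+\widetilde{\alpha}_k$ one gets exactly $\mathcal{N}(C^\bullet\oplus\widetilde{C}^\bullet)=\mathcal{N}(C^\bullet)+\mathcal{N}(\widetilde{C}^\bullet)+\alpha_0\widetilde{\alpha}_0+\alpha_1\widetilde{\alpha}_1$, so after cancelling $\mathcal{N}(C^\bullet)+\mathcal{N}(\widetilde{C}^\bullet)$ the claim reduces to
\[
(\alpha_0+\eta_1+\alpha_1)(\widetilde{\alpha}_1+\widetilde{\eta}_0+\widetilde{\alpha}_0)+\alpha_0\widetilde{\alpha}_0+\alpha_1\widetilde{\alpha}_1+\sum_{k=0,1}\big(\widetilde{\beta}_k(\alpha_k+\eta_k)+\widetilde{\eta}_k\alpha_k\big)\equiv\eta_1\widetilde{\eta}_0\pmod 2 .
\]
Substituting $\widetilde{\beta}_0=\widetilde{\alpha}_1$, $\widetilde{\beta}_1=\widetilde{\alpha}_0$, expanding, and cancelling every term that occurs twice, the left-hand side collapses to $\eta_1\widetilde{\eta}_0+\widetilde{\alpha}_1(\eta_0+\eta_1)+\alpha_1(\widetilde{\eta}_0+\widetilde{\eta}_1)$, so the congruence is equivalent to $\widetilde{\alpha}_1(\eta_0+\eta_1)+\alpha_1(\widetilde{\eta}_0+\widetilde{\eta}_1)\equiv 0\pmod 2$. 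This holds because $\chi(C^\bullet)=\eta_0-\eta_1=0$ and $\chi(\widetilde{C}^\bullet)=\widetilde{\eta}_0-\widetilde{\eta}_1=0$, which is precisely where the hypothesis on the Euler characteristics enters. The only genuine difficulty is this sign bookkeeping — organizing the Koszul signs generated by all the fusion rearrangements so that the vanishing of the two Euler characteristics is seen to be exactly what forces the two sides to agree; everything else is formal and parallels the proof of \cite[Lemma 2.7]{BK3}.
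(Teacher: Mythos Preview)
Your proof is correct and follows essentially the same approach as the paper. The paper defers the diagram-chase to \cite[Lemma 2.7]{BK3} and jumps directly to the sign congruence \eqref{E:eukccnmm}, whereas you spell out the chase explicitly; but your Koszul sign $\sum_k(\widetilde{\beta}_k(\alpha_k+\eta_k)+\widetilde{\eta}_k\alpha_k)$ is exactly the right-hand side of \eqref{E:eukccnmm}, your identity $\mathcal{N}(C^\bullet\oplus\widetilde{C}^\bullet)=\mathcal{N}(C^\bullet)+\mathcal{N}(\widetilde{C}^\bullet)+\alpha_0\widetilde{\alpha}_0+\alpha_1\widetilde{\alpha}_1$ is \eqref{E:nnn}, and both arguments reduce to the same residual term $\widetilde{\alpha}_1(\eta_0+\eta_1)+\alpha_1(\widetilde{\eta}_0+\widetilde{\eta}_1)$, killed by the Euler-characteristic hypothesis exactly as in \eqref{E:lonequ}.
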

\begin{proof}
Proceed similar procedures as the proof of Lemma 2.7 of \cite{BK3}, cf. \cite[P. 152-153]{BK3}, we conclude that to prove the commutativity of the diagram \eqref{E:comm} it remains to show that, mod $2$,
\begin{equation}\label{E:eukccnmm}
\mathcal{N}(C^\bullet \oplus \widetilde{C}^\bullet)+\mathcal{N}(C^\bullet)+\mathcal{N}( \widetilde{C}^\bullet)+\mathcal{M}(C^\bullet, \widetilde{C}^\bullet) +\mathcal{M}(H^\bullet, \widetilde{H}^\bullet) 
\end{equation}
\[
\equiv \sum_{k=0,1}\big( \dim A^{\bar k} \cdot \dim \widetilde{A}^{\overline{k+1}}+ \dim H^{\bar k} \cdot \dim \widetilde{A}^{\overline{k+1}}+\dim A^{\bar k} \cdot \dim \widetilde{H}^{\bar k}\big).
\]
Using the identity
\begin{equation}\label{E:usid1}
\frac{(x+y)(x+y+(-1)^j)}{2}-\frac{x(x+(-1)^j)}{2}-\frac{y(y+(-1)^j)}{2}=xy,
\end{equation}
where $x,y \in \mathbb{C}, j \in \mathbb{Z}_{\ge 0}$, we have
\begin{equation}\label{E:nnn}
\mathcal{N}(C^\bullet \oplus \widetilde{C}^\bullet) - \mathcal{N}(C^\bullet)- \mathcal{N}( \widetilde{C}^\bullet) = \sum_{k=0,1} \dim A^{\bar k} \cdot \dim \widetilde{A}^{\bar k}.
\end{equation}
By \eqref{E:decompbha} and the equalities $\dim A^{\overline{k+1}}= \dim B^{\bar k}$, $\dim \widetilde{A}^{\overline{k+1}}= \dim \widetilde{B}^{\bar k}$, we have
\begin{equation}\label{E:cbha1}
\dim C^{\bar k} = \dim A^{\bar k} + \dim A^{\overline{k+1}} + \dim H^{\bar k}, \quad \dim \widetilde{C}^{\bar k} = \dim \widetilde{A}^{\bar k} + \dim \widetilde{A}^{\overline{k+1}} + \dim \widetilde{H}^{\bar k}.
\end{equation}
By \eqref{E:mvw123}, \eqref{E:cbha1} and a straightforward computation, we obtain, modulo $2$,
\begin{equation}\label{E:lonequ}
\begin{array}{l}
\mathcal{M}(C^\bullet, \widetilde{C}^\bullet) +\mathcal{M}(H^\bullet, \widetilde{H}^\bullet) + \sum_{k=0,1}\big( \dim A^{\bar k} \cdot \dim \widetilde{A}^{\overline{k+1}}+ \dim H^{\bar k} \cdot \dim \widetilde{A}^{\overline{k+1}}+\dim A^{\bar k} \cdot \dim \widetilde{H}^{\bar k}\big)\\

\\
= \sum_{k=0,1}\dim A^{\bar k} \cdot \dim \widetilde{A}^{\bar k} + \dim A^{\bar 1}\cdot (\dim \widetilde{H}^{\bar 0}+ \dim \widetilde{H}^{\bar 1}) + (\dim H^{\bar 0}+ \dim H^{\bar 1})\cdot \dim \widetilde{A}^{\bar 1}.
 \end{array}
\end{equation}
By \eqref{E:nnn}, \eqref{E:lonequ} and the assumption that the Euler characteristic of the complex $(C^\bullet,d)$ (resp. $(\widetilde{C}^\bullet,\widetilde{d})$) is zero, i.e. $\sum_{k=0,1} \dim H^{\bar k} \equiv 0 \, (\operatorname{mod}\, 2)$ (resp. $\sum_{k=0,1}\dim \widetilde{H}^{\bar k} \equiv 0 (\operatorname{mod}\, 2) \,$), we obtain the equality \eqref{E:eukccnmm}.
\end{proof}

\subsection{The refined torsion of a finite dimensional $\mathbb{Z}_2$-graded complex with a chirality operator}
 

Let $(C^\bullet,d)$ be a $\mathbb{Z}_2$-graded complex defined as \eqref{E:detline}. A {\em chirality operator} is an involution $\Gamma : C^\bullet \to C^\bullet$ such that $\Gamma(C^{\bar{k}})=C^{\overline{k+1}}, k=0,1$. For $c_{\bar{k}} \in \Det(C^{\bar{k}})$, we denote by $\Gamma c_{\bar{k}} \in \Det(C^{\overline{k+1}})$ the image of $c_{\bar{k}}$ under the isomorphism $\Det(C^{\bar{k}}) \to \Det(C^{\overline{k+1}})$ induced by $\Gamma$.

Fix a nonzero element $c_{\bar{0}} \in \Det(C^{\bar{0}})$ and consider the element 
\begin{equation}\label{E:elec}
c_{\Gamma}:=(-1)^{\mathcal{R}(C^\bullet)} \cdot c_{\bar{0}} \otimes (\Gamma c_{\bar{0}})^{-1} \in \Det(C^\bullet),
\end{equation} 
where
\begin{equation}\label{E:rcbullet}
\mathcal{R}(C^\bullet ) \, := \, \frac{1}{2} \dim C^{\bar 0} \cdot (\dim C^{\bar 0}+1).
\end{equation}

The element defined in \eqref{E:elec} is a $\mathbb{Z}_2$-graded analogue of the $\mathbb{Z}$-graded one as defined in \cite[(4-1)]{BK3}, by Braverman-Kappeler, and is chosen to fit the $\mathbb{Z}_2$-graded setting.

\begin{definition}\label{D:elerho1}
The {\em refined torsion} of the pair $(C^\bullet,\Gamma)$ is the element
\begin{equation}\label{E:elerho}
\rho_{\Gamma}=\rho_{C^{\bullet},\Gamma}:= \Phi_{C^{\bullet}}(c_{\Gamma}),
\end{equation}
where $\Phi_{C^{\bullet}}$ is the canonical map defined by \eqref{E:isomorphism}.
\end{definition}


The following is the $\mathbb{Z}_2$-graded analogue of Lemma 4.7 of \cite{BK3}.
\begin{lemma}
Let $(C^\bullet,d)$ and $(\widetilde{C}^\bullet,\widetilde{d})$ be $\mathbb{Z}_2$-graded complexes defined as \eqref{E:detline} and let $\Gamma : C^\bullet \to C^\bullet$, \, $\widetilde{\Gamma} : \widetilde{C}^\bullet \to \widetilde{C}^\bullet$ be chirality operators. Then $\widehat{\Gamma}:=\Gamma \oplus \widetilde{\Gamma} : C^\bullet \oplus \widetilde{C}^\bullet \to C^\bullet \oplus \widetilde{C}^\bullet$ is a chirality operator on the direct sum complex $(C^\bullet \oplus \widetilde{C}^\bullet ,d \oplus \widetilde{d})$ and
\begin{equation}\label{E:disuto}
\rho_{\widehat{\Gamma}}=\mu_{H^\bullet(d),H^\bullet(\widetilde{d})}\big( \rho_\Gamma \otimes \rho_{\widetilde{\Gamma}} \big).
\end{equation}
\end{lemma}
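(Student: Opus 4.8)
The plan is to reduce the identity to the commutativity diagram of Lemma~\ref{L:commu} together with a bookkeeping of the normalizing signs appearing in \eqref{E:elec}.

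First I would dispatch the easy parts: that $\widehat{\Gamma}=\Gamma\oplus\widetilde{\Gamma}$ is an involution carrying $\widehat{C}^{\bar k}:=C^{\bar k}\oplus\widetilde{C}^{\bar k}$ onto $\widehat{C}^{\overline{k+1}}$ is immediate, and the mere existence of $\Gamma$ forces $\dim C^{\bar 0}=\dim C^{\bar 1}$, and likewise $\dim\widetilde{C}^{\bar 0}=\dim\widetilde{C}^{\bar 1}$, so that $\chi(C^\bullet)=\chi(\widetilde{C}^\bullet)=0$ and Lemma~\ref{L:commu} applies. Chasing the diagram \eqref{E:comm} then gives $\mu_{H^\bullet(d),H^\bullet(\widetilde{d})}\big(\rho_\Gamma\otimes\rho_{\widetilde{\Gamma}}\big)=\mu_{H^\bullet(d),H^\bullet(\widetilde{d})}\big((\phi_{C^\bullet}\otimes\phi_{\widetilde{C}^\bullet})(c_\Gamma\otimes c_{\widetilde{\Gamma}})\big)=\phi_{\widehat{C}^\bullet}\big(\mu_{C^\bullet,\widetilde{C}^\bullet}(c_\Gamma\otimes c_{\widetilde{\Gamma}})\big)$, while $\rho_{\widehat{\Gamma}}=\phi_{\widehat{C}^\bullet}(c_{\widehat{\Gamma}})$ by the definition of the refined torsion. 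Hence the lemma would follow once I establish the identity of elements of $\Det(\widehat{C}^\bullet)$
\[
c_{\widehat{\Gamma}}=\mu_{C^\bullet,\widetilde{C}^\bullet}\big(c_\Gamma\otimes c_{\widetilde{\Gamma}}\big).
\]

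To prove this last identity I would fix nonzero $c_{\bar 0}\in\Det(C^{\bar 0})$ and $\widetilde{c}_{\bar 0}\in\Det(\widetilde{C}^{\bar 0})$ and use $\widehat{c}_{\bar 0}:=\mu_{C^{\bar 0},\widetilde{C}^{\bar 0}}(c_{\bar 0}\otimes\widetilde{c}_{\bar 0})$ as the reference element for the direct sum in \eqref{E:elec}. Since $\widehat{\Gamma}$ is block diagonal, the map it induces on determinant lines intertwines the ungraded fusion isomorphisms, so $\widehat{\Gamma}\,\widehat{c}_{\bar 0}=\mu_{C^{\bar 1},\widetilde{C}^{\bar 1}}(\Gamma c_{\bar 0}\otimes\widetilde{\Gamma}\widetilde{c}_{\bar 0})$; inserting this into \eqref{E:elec} writes $c_{\widehat{\Gamma}}$ as $(-1)^{\mathcal{R}(\widehat{C}^\bullet)}$ times $\mu_{C^{\bar 0},\widetilde{C}^{\bar 0}}(c_{\bar 0}\otimes\widetilde{c}_{\bar 0})\otimes\big(\mu_{C^{\bar 1},\widetilde{C}^{\bar 1}}(\Gamma c_{\bar 0}\otimes\widetilde{\Gamma}\widetilde{c}_{\bar 0})\big)^{-1}$. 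Expanding the right-hand side instead by means of \eqref{E:elec} and the definition \eqref{E:grafus} of the graded fusion isomorphism — so that the only sign produced beyond $(-1)^{\mathcal{R}(C^\bullet)+\mathcal{R}(\widetilde{C}^\bullet)}$ is the $(-1)^{\mathcal{M}(C^\bullet,\widetilde{C}^\bullet)}$ built into \eqref{E:grafus} — yields the same element with prefactor $(-1)^{\mathcal{R}(C^\bullet)+\mathcal{R}(\widetilde{C}^\bullet)+\mathcal{M}(C^\bullet,\widetilde{C}^\bullet)}$. Thus the identity reduces to the congruence
\[
\mathcal{R}(\widehat{C}^\bullet)\equiv\mathcal{R}(C^\bullet)+\mathcal{R}(\widetilde{C}^\bullet)+\mathcal{M}(C^\bullet,\widetilde{C}^\bullet)\pmod 2,
\]
which I would verify — in fact as an exact equality — from the identity \eqref{E:usid1} with $j=0$, $x=\dim C^{\bar 0}$, $y=\dim\widetilde{C}^{\bar 0}$, upon recalling \eqref{E:rcbullet}, \eqref{E:mvw123} and $\dim C^{\bar 1}=\dim C^{\bar 0}$.

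The hard part is the sign bookkeeping in the third step: one must make sure that the normalization $\mathcal{M}$ in \eqref{E:grafus} is precisely what reconciles the orderings of the four line factors $\Det(C^{\bar 0}),\Det(C^{\bar 1})^{-1},\Det(\widetilde{C}^{\bar 0}),\Det(\widetilde{C}^{\bar 1})^{-1}$ and that the block-diagonal operator $\widehat{\Gamma}$ contributes no additional sign. Once those points are pinned down, the rest is a formal manipulation parallel to the proof of Lemma~4.7 of \cite{BK3}.
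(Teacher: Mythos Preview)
Your proposal is correct and follows essentially the same route as the paper's proof: reduce via Lemma~\ref{L:commu} to the identity $c_{\widehat{\Gamma}}=\mu_{C^\bullet,\widetilde{C}^\bullet}(c_\Gamma\otimes c_{\widetilde{\Gamma}})$, fix reference elements $c_{\bar 0},\widetilde{c}_{\bar 0}$, compute $\widehat{\Gamma}\,\widehat{c}_{\bar 0}$ via block-diagonality, and then verify the sign congruence $\mathcal{R}(\widehat{C}^\bullet)\equiv\mathcal{R}(C^\bullet)+\mathcal{R}(\widetilde{C}^\bullet)+\mathcal{M}(C^\bullet,\widetilde{C}^\bullet)$ using \eqref{E:usid1} and $\dim C^{\bar 0}=\dim C^{\bar 1}$. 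Your explicit observation that the chirality operators force $\chi(C^\bullet)=\chi(\widetilde{C}^\bullet)=0$, so that Lemma~\ref{L:commu} applies, is a point the paper leaves implicit.
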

\begin{proof}
Clearly, $\widehat{\Gamma}^2=1$ and $\widehat{\Gamma}(C^{\bar k} \oplus \widetilde{C}^{\bar k})=C^{\overline{k+1}}\oplus \widetilde{C}^{\overline{k+1}}$. Hence, $\widehat{\Gamma}$ is a chirality operator. By Lemma \ref{L:commu}, to prove \eqref{E:disuto} it is enough to show that
\begin{equation}\label{E:maidp}
c_{\widehat{\Gamma}}=\mu_{C^\bullet,\widehat{C}^\bullet}\big( c_\Gamma \otimes c_{\widetilde{\Gamma}} \big).
\end{equation}
Fix nonzero elements $c_{\bar 0} \in \Det(C^{\bar 0}), \widetilde{c}_{\bar 0} \in \Det(\widetilde{C}^{\bar 0})$ and set $\widehat{c}_{\bar 0} = \mu_{C^{\bar 0},\widetilde{C}_{\bar 0}}(c_{\bar 0} \otimes \widetilde{c}_{\bar 0})$. We denote the operators induced by $\Gamma$ and $\widetilde{\Gamma}$ on $\Det(C^\bullet)$ and $\Det(\widetilde{C}^\bullet)$ by the same letters. Thus,
\[
\widehat{\Gamma}\, \widehat{c}_{\bar 0} = (\Gamma \oplus \widetilde{\Gamma}) \circ \mu_{C^{\bar 0}, \widetilde{C}_{\bar 0}}(c_{\bar 0} \otimes \widetilde{c}_{\bar 0}) = \mu_{C^{\bar 1},\widetilde{C}^{\bar 1}}(\Gamma c_{\bar 0} \otimes \widetilde{\Gamma} \widetilde{c}_{\bar 0}).
\]
Hence, it follows from \eqref{E:grafus} and \eqref{E:elec} that
\begin{align}\label{E:watd}
\mu_{C^\bullet,\widetilde{C}^\bullet}(c_\Gamma \otimes c_{\widetilde{\Gamma}}) & = (-1)^{\mathcal{M}(C^\bullet, \widetilde{C}^\bullet)+\mathcal{R}(C^\bullet)+\mathcal{R}(\widetilde{C}^\bullet)} \cdot \widehat{c}_{\bar 0} \otimes (\widehat{\Gamma} \, \widehat{c}_{\bar 0})^{-1} \nonumber \\
& = (-1)^{\mathcal{M}(C^\bullet, \widetilde{C}^\bullet)+\mathcal{R}(C^\bullet)+\mathcal{R}(\widetilde{C}^\bullet)-\mathcal{R}(C^\bullet \oplus \widetilde{C}^\bullet)} \cdot c_{\widehat{\Gamma}}.
\end{align}
Using the identity \eqref{E:usid1}, we obtain from \eqref{E:rcbullet}
\begin{equation}\label{E:rrmr}
\mathcal{R}(C^\bullet)+\mathcal{R}(\widetilde{C}^\bullet)-\mathcal{R}(C^\bullet \oplus \widetilde{C}^\bullet) = \dim C^{\bar 0} \cdot \dim \widetilde{C}^{\bar 0}.
\end{equation}
Using the isomorphism $\Gamma : C^{\bar 0} \to C^{\bar 1}$, one sees that $\dim C^{\bar 0} = \dim C^{\bar 1}$. Combining this fact with \eqref{E:mvw123} and \eqref{E:rrmr}, we conclude that 
\begin{equation}\label{E:id13}
\mathcal{M}(C^\bullet, \widetilde{C}^\bullet)+\mathcal{R}(C^\bullet)+\mathcal{R}(\widetilde{C}^\bullet)-\mathcal{R}(C^\bullet \oplus \widetilde{C}^\bullet) \equiv 0 \quad \text{mod} \ 2.
\end{equation}
The identity \eqref{E:maidp} follows from \eqref{E:watd} and \eqref{E:id13}.
\end{proof}

\subsection{Dependence of the $\mathbb{Z}_2$-graded refined torsion on the chirality operator}

Suppose that $\Gamma_t,t \in \mathbb{R}$, is a smooth family of chirality operators on the $\mathbb{Z}_2$-graded complex $(C^{  \bullet},d)$. Let $\dot{\Gamma}_t:C^{\bar k} \to C^{\overline{k+1}},k=0,1$, denote the derivative of $\Gamma_t$ with respect to $t$. Then, for $k=0,1$, the composition $\dot{\Gamma_t}\circ \Gamma_t$ maps $C^{\bar k}$ into itself. Define the {\em supertrace} $\operatorname{Tr}_s(\dot{\Gamma_t}\circ \Gamma_t)$ of $\dot{\Gamma_t}\circ \Gamma_t$ by the formula
\begin{equation}\label{E:supertrace}
\operatorname{Tr}_s(\dot{\Gamma_t}\circ \Gamma_t):= \operatorname{Tr}(\dot{\Gamma_t}\circ \Gamma_t|_{C^{\bar 0}})- \operatorname{Tr}(\dot{\Gamma_t}\circ \Gamma_t|_{C^{\bar 1}}).
\end{equation}

The following proposition is the $\mathbb{Z}_2$-graded analogue of Proposition 4.9 of \cite{BK3}. We modify the proof of Proposition 4.9 of \cite{BK3} slightly to fit our setting.
\begin{proposition}
Let $(C^{  \bullet},d)$ be a $\mathbb{Z}_2$-graded complex of finite dimensional $\bf{k}$-vector spaces and let $\Gamma_t : C^{\bar{k}} \to C^{\overline{k+1}}, t \in \mathbb{R}$, be a smooth family of chirality operators on $C^{  \bullet}$. Then the following equality holds
\begin{equation}\label{E:varfor}
\frac{d}{dt}\rho_{\Gamma_t}= \frac{1}{2} \operatorname{Tr}_s(\dot{\Gamma_t}\circ \Gamma_t) \cdot \rho_{\Gamma_t}.
\end{equation}
\end{proposition}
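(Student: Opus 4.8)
The plan is to reduce the identity \eqref{E:varfor} to a computation with the explicit formula for $c_\Gamma$ in \eqref{E:elec} and the naturality of the isomorphism $\phi_{C^\bullet}$. First I would observe that by Definition \ref{D:elerho1} we have $\rho_{\Gamma_t}=\phi_{C^\bullet}(c_{\Gamma_t})$, and that $\phi_{C^\bullet}$ is a fixed linear isomorphism of lines, independent of $t$ (the decompositions \eqref{E:decompbha} and the sign \eqref{E:ncbullet} depend only on $d$, not on $\Gamma_t$). Hence it suffices to prove the corresponding identity at the level of the determinant line $\Det(C^\bullet)$, namely
\[
\frac{d}{dt}\,c_{\Gamma_t}=\frac12\,\operatorname{Tr}_s\!\big(\dot\Gamma_t\circ\Gamma_t\big)\cdot c_{\Gamma_t}.
\]

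To prove this, fix a nonzero $c_{\bar 0}\in\Det(C^{\bar 0})$, independent of $t$; then $c_{\Gamma_t}=(-1)^{\mathcal{R}(C^\bullet)}\,c_{\bar 0}\otimes(\Gamma_t c_{\bar 0})^{-1}$, and since $c_{\bar 0}$ and the sign are constant, the whole $t$-dependence sits in $(\Gamma_t c_{\bar 0})^{-1}\in\Det(C^{\bar 1})^{-1}$. So I would compute $\frac{d}{dt}(\Gamma_t c_{\bar 0})$ in the line $\Det(C^{\bar 1})$. Writing $n:=\dim C^{\bar 0}=\dim C^{\bar 1}$ and choosing a basis $e_1,\dots,e_n$ of $C^{\bar 0}$ with $c_{\bar 0}=e_1\wedge\cdots\wedge e_n$, we have $\Gamma_t c_{\bar 0}=\Gamma_t e_1\wedge\cdots\wedge\Gamma_t e_n$, and the Leibniz rule gives
\[
\frac{d}{dt}(\Gamma_t c_{\bar 0})=\sum_{i=1}^n \Gamma_t e_1\wedge\cdots\wedge\dot\Gamma_t e_i\wedge\cdots\wedge\Gamma_t e_n
=\operatorname{Tr}\!\big(\dot\Gamma_t\circ\Gamma_t|_{C^{\bar 0}}\big)\cdot\Gamma_t c_{\bar 0},
\]
where the last equality uses that $\{\Gamma_t e_j\}$ is a basis of $C^{\bar 1}$ and expands $\dot\Gamma_t e_i\in C^{\bar 1}$ in that basis, so the coefficient of $\Gamma_t e_i$ in $\dot\Gamma_t e_i=(\dot\Gamma_t\circ\Gamma_t)(\Gamma_t e_i)$ contributes and the rest die in the wedge; summing over $i$ produces exactly $\operatorname{Tr}(\dot\Gamma_t\circ\Gamma_t|_{C^{\bar 0}})$. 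Consequently, in the dual line,
\[
\frac{d}{dt}(\Gamma_t c_{\bar 0})^{-1}=-\operatorname{Tr}\!\big(\dot\Gamma_t\circ\Gamma_t|_{C^{\bar 0}}\big)\cdot(\Gamma_t c_{\bar 0})^{-1},
\]
and therefore $\frac{d}{dt}c_{\Gamma_t}=-\operatorname{Tr}(\dot\Gamma_t\circ\Gamma_t|_{C^{\bar 0}})\cdot c_{\Gamma_t}$.

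It then remains to match this with the supertrace. Here I would use that $\Gamma_t^2=\operatorname{id}$ together with the fact that $\dot\Gamma_t:C^{\bar k}\to C^{\overline{k+1}}$ intertwines the two pieces: differentiating $\Gamma_t^2=1$ gives $\dot\Gamma_t\circ\Gamma_t+\Gamma_t\circ\dot\Gamma_t=0$, so $\Gamma_t\circ(\dot\Gamma_t\circ\Gamma_t)\circ\Gamma_t^{-1}=-\dot\Gamma_t\circ\Gamma_t$; since $\Gamma_t$ conjugates $(\dot\Gamma_t\circ\Gamma_t)|_{C^{\bar 0}}$ to $(\Gamma_t\dot\Gamma_t\circ\Gamma_t\Gamma_t^{-1})|_{C^{\bar 1}}$, this yields $\operatorname{Tr}(\dot\Gamma_t\circ\Gamma_t|_{C^{\bar 1}})=-\operatorname{Tr}(\dot\Gamma_t\circ\Gamma_t|_{C^{\bar 0}})$. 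Plugging into \eqref{E:supertrace} gives $\operatorname{Tr}_s(\dot\Gamma_t\circ\Gamma_t)=2\operatorname{Tr}(\dot\Gamma_t\circ\Gamma_t|_{C^{\bar 0}})$, whence $-\operatorname{Tr}(\dot\Gamma_t\circ\Gamma_t|_{C^{\bar 0}})=-\tfrac12\operatorname{Tr}_s(\dot\Gamma_t\circ\Gamma_t)$. This seems to produce $\frac{d}{dt}c_{\Gamma_t}=-\tfrac12\operatorname{Tr}_s(\dot\Gamma_t\circ\Gamma_t)\,c_{\Gamma_t}$, i.e.\ the \emph{opposite} sign from \eqref{E:varfor}; so the main point to get right — and the step I expect to be the real obstacle — is a careful bookkeeping of signs, in particular whether the relevant supertrace in \eqref{E:supertrace} should be read on $C^\bullet$ or, after applying $\phi_{C^\bullet}$, transported to a statement about $H^\bullet(d)$, and the resolution is presumably that $c_{\bar 0}$ cannot be taken $t$-independent once one insists on compatibility with the chosen decomposition, or that the sign $(-1)^{\mathcal R(C^\bullet)}$ interacts with an orientation reversal; I would follow the proof of \cite[Proposition 4.9]{BK3} line by line to pin down exactly which convention makes \eqref{E:varfor} come out with the stated sign, and adapt the $\mathbb Z_2$-graded supertrace accordingly.
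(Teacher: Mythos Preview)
Your approach is essentially the same as the paper's, and the computation is almost right; the perceived sign discrepancy is not a subtlety of conventions but a plain bookkeeping slip in your trace computation.

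When you write $\dot\Gamma_t e_i=(\dot\Gamma_t\circ\Gamma_t)(\Gamma_t e_i)$ and expand in the basis $\{\Gamma_t e_j\}$ of $C^{\bar 1}$, the operator $\dot\Gamma_t\circ\Gamma_t$ whose diagonal entries you are summing is the restriction to $C^{\bar 1}$, not to $C^{\bar 0}$: indeed $\Gamma_t$ sends $\Gamma_t e_i\in C^{\bar 1}$ to $e_i\in C^{\bar 0}$ and then $\dot\Gamma_t$ sends it back to $C^{\bar 1}$. Equivalently, for a family $A_t:V\to W$ one has $\frac{d}{dt}\det A_t=\operatorname{Tr}(\dot A_t A_t^{-1})\det A_t$ with $\dot A_t A_t^{-1}:W\to W$; here $A_t=\Gamma_t|_{C^{\bar 0}}$, $A_t^{-1}=\Gamma_t|_{C^{\bar 1}}$, so the trace is $\operatorname{Tr}(\dot\Gamma_t\circ\Gamma_t|_{C^{\bar 1}})$. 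Thus the correct identity is
\[
\frac{d}{dt}c_{\Gamma_t}=-\operatorname{Tr}\big(\dot\Gamma_t\circ\Gamma_t|_{C^{\bar 1}}\big)\cdot c_{\Gamma_t},
\]
which is exactly the paper's equation \eqref{E:varform} (there written as $-\operatorname{Tr}(\dot\Gamma_{t,\bar 0}\circ\Gamma_{t,\bar 1})$). Combined with your own (correct) relation $\operatorname{Tr}(\dot\Gamma_t\circ\Gamma_t|_{C^{\bar 1}})=-\operatorname{Tr}(\dot\Gamma_t\circ\Gamma_t|_{C^{\bar 0}})$, hence $\operatorname{Tr}_s(\dot\Gamma_t\circ\Gamma_t)=-2\operatorname{Tr}(\dot\Gamma_t\circ\Gamma_t|_{C^{\bar 1}})$, this gives $\frac{d}{dt}c_{\Gamma_t}=\tfrac12\operatorname{Tr}_s(\dot\Gamma_t\circ\Gamma_t)\,c_{\Gamma_t}$ with the stated sign. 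No adjustment of $c_{\bar 0}$, of $\mathcal R(C^\bullet)$, or of the meaning of the supertrace is needed.
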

\begin{proof}
Let $\Gamma_{t,\bar{0}}$ denote the restriction of $\Gamma_t$ to $C^{\bar 0}$. We denoted the map $\Det(C^{\bar 0}) \to \Det(C^{\bar{1}})$ induced by $\Gamma_t$ by the same symbol $\Gamma_t$ above. To avoid confusion we denote this map by $\Gamma^{\Det}_{t,\bar{0}}$ in the proof.

For $ t_0 \in \mathbb{R}$, we have $\Gamma_{t,\bar{0}}=\Gamma_{t,\bar{0}} \circ \Gamma_{t_0,\bar{1}}\Gamma_{t_0,\bar{0}}$. 
Hence, 
\[
\frac{d}{dt}\Big|_{t=t_0} \Gamma_{t,\bar{0}}^{\Det} = \frac{d}{dt} \Big|_{t=t_0} \Big[ \Det(\Gamma_{t,\bar{0}} \circ \Gamma_{t_0,\bar{1}}) \Gamma_{t_0,\bar{0}}^{\Det} \Big] = \operatorname{Tr}( \dot{\Gamma}_{t_0, \bar{0} }  \circ \Gamma_{t_0,\bar{1}})\Gamma^{\Det}_{t_0,\bar{0}},
\]
where for the latter equality we used the fact that for any smooth family of operators $A_t:C^{\bar{1}} \to C^{\bar{1}}$, one has $\frac{d}{dt} \Det(A_t)=\operatorname{Tr}(\dot{A_t}A_t^{-1})\cdot \Det(A_t)$ and that $\Gamma_{t_0,\bar{0}}^{-1}=\Gamma_{t_0,\bar{1}}$. Hence, for any nonzero element $c_{\bar 0} \in \Det(C^{\bar 0})$, we have
\begin{equation}\label{E:varif}
\frac{d}{dt}(\Gamma_{t,\bar{0}}^{\Det}(c_{\bar 0}))^{\pm}= \pm \operatorname{Tr}(\dot{\Gamma}_{t,\bar{0}} \circ \Gamma_{t,\bar{1}})\cdot (\Gamma^{\Det}_{t,\bar{0}})^{\pm}.
\end{equation}
By \eqref{E:varif} and the definition \eqref{E:elec} of $c_\Gamma$, we obtain
\begin{equation}\label{E:varform}
\frac{d}{dt}c_{\Gamma_t}=- \operatorname{Tr}(\dot{\Gamma}_{t,\bar{0}}\circ \Gamma_{t,\bar{1}}) \cdot c_{\Gamma_t}.
\end{equation} 
Since $\Gamma_{t,\bar{0}} \circ \Gamma_{t,\bar{1}}=1$, we have
\[
0=\frac{d}{dt} \operatorname{Tr}(\Gamma_{t,\bar{0}} \circ \Gamma_{t,\bar{1}})= \operatorname{Tr}(\dot{\Gamma}_{t,\bar{0}} \circ \Gamma_{t,\bar{1}})+\operatorname{Tr}(\Gamma_{t,\bar{0}} \circ \dot{\Gamma}_{t,\bar{1}}).
\]
Hence,
\begin{equation}\label{E:trpm}
\operatorname{Tr}(\dot{\Gamma}_{t,\bar{0}} \circ \Gamma_{t,\bar{1}}) = - \operatorname{Tr}(\dot{\Gamma}_{t,\bar{1}}  \circ \Gamma_{t,\bar{0}}).
\end{equation}
Combining \eqref{E:varform} with \eqref{E:trpm}, we obtain \eqref{E:varfor}.
\end{proof}


\subsection{The signature operator}
We now introduce the $\mathbb{Z}_2$-graded analogue of the $\mathbb{Z}$-graded finite dimensional odd signature operator of \cite[Section 5]{BK3}.
The {\em signature operators} $\B_{\overline{k}},k=0,1$ are defined by the formula
\begin{equation}\label{E:sigop}
\B_{\bar{k}} := \Gamma d_{\bar{k}} + d_{\overline{k+1}} \Gamma. 
\end{equation}
Define
\begin{equation}\label{E:cjpm}
C^{\bar{k}}_+ := \Ker(d_{\overline{k+1}} \circ \Gamma) \cap C^{\bar{k}} = \Gamma(\Ker d_{\overline{k+1}} \cap C^{\overline{k+1}}), \qquad
C^{\bar{k}}_- := \Ker d_{\bar{k}} \cap C^{\bar{k}}. 
\end{equation}
Let $\B^{\pm}_{\bar{k}}$ denote the restriction of $\B_{\bar{k}}$ to $C^{\bar{k}}_{\pm}$. Then one has 
\begin{equation}\label{E:imbp}
\im \B^+_{\bar{k}} \subseteq \im (\Gamma \circ d_{\bar{k}}|_{C^{\bar{k}}}) \subseteq \Gamma(\Ker d_{\overline{k+1}}|_{C^{\overline{k+1}}}) \subseteq C^{\bar{k}}_+ ;
\end{equation}
\begin{equation}\label{E:imbm}
\im \B^-_{\bar{k}} \subseteq \im ( d_{\overline{k+1}} \circ\Gamma|_{C^{\bar{k}}}) \subseteq \im (d_{\overline{k+1}}|_{C^{\overline{k+1}}}) \subseteq C^{\bar{k}}_-.
\end{equation}
Hence,
$$
\B^+_{\bar{k}}=\Gamma \circ d_{\bar{k}} : C^{\bar{k}}_+ \to C^{\bar{k}}_+, \quad \B^-_{\bar{k}}=d_{\overline{k+1}} \circ \Gamma : C^{\bar{k}}_- \to C^{\bar{k}}_-.
$$
Note thar $\B_{\bar{k}}=\Gamma  \circ \B_{\overline{k+1}} \circ \Gamma$.

The following lemma is the $\mathbb{Z}_2$-graded analogue of \cite[Lemma 5.2]{BK3}. The proof is a verbatim repetition of the proof of \cite[Lemma 5.2]{BK3}, we skip the proof.
\begin{lemma}\label{L:ac}
Suppose that the signature operators $\B_{\bar{k}},k=0,1$ are bijective. Then the complex $(C^{\bullet},d)$ is acyclic and
\begin{equation}\label{E:cbij}
C^{\bar{k}}=C^{\bar{k}}_+ \oplus C^{\bar{k}}_-.
\end{equation}
\end{lemma}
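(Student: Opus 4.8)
The plan is to prove the two assertions — acyclicity of $(C^\bullet,d)$ and the direct sum decomposition \eqref{E:cbij} — simultaneously, exploiting the observation already recorded before the statement that $\B^+_{\bar k} = \Gamma\circ d_{\bar k}$ acts on $C^{\bar k}_+$ and $\B^-_{\bar k} = d_{\overline{k+1}}\circ\Gamma$ acts on $C^{\bar k}_-$. First I would note that since $\B_{\bar k} = \B^+_{\bar k}\oplus\B^-_{\bar k}$ on the (not yet known to be direct) sum $C^{\bar k}_+ + C^{\bar k}_-$, bijectivity of $\B_{\bar k}$ forces both restrictions $\B^{\pm}_{\bar k}$ to be injective. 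From injectivity of $\B^+_{\bar k} = \Gamma\circ d_{\bar k}$ on $C^{\bar k}_+ = \Gamma(\Ker d_{\overline{k+1}}\cap C^{\overline{k+1}})$, and the fact that $\Gamma$ is an involution, I would deduce that $d_{\bar k}$ is injective on $C^{\bar k}_+$; likewise injectivity of $\B^-_{\bar k} = d_{\overline{k+1}}\circ\Gamma$ on $C^{\bar k}_- = \Ker d_{\bar k}\cap C^{\bar k}$ gives that $\Gamma$ maps $C^{\bar k}_-$ injectively into a space on which $d_{\overline{k+1}}$ is injective.

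Next I would establish $C^{\bar k}_+\cap C^{\bar k}_- = 0$: an element in the intersection lies in $\Ker d_{\bar k}$ and is killed by $\B^+_{\bar k}=\Gamma d_{\bar k}$ anyway, but more to the point it is a common element on which both $\B^+$ and $\B^-$ vanish, hence by the injectivity just derived it is zero; concretely, $v\in C^{\bar k}_+\cap C^{\bar k}_-$ satisfies $d_{\bar k}v=0$ and $v = \Gamma w$ with $w\in\Ker d_{\overline{k+1}}$, so $\B_{\bar k}v = \Gamma d_{\bar k}v + d_{\overline{k+1}}\Gamma v = 0 + d_{\overline{k+1}}w = 0$, forcing $v=0$ by bijectivity of $\B_{\bar k}$. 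Then a dimension count finishes \eqref{E:cbij}: by the rank-nullity theorem applied to $d_{\bar k}$, $\dim C^{\bar k} = \dim\Ker d_{\bar k} + \dim\im d_{\bar k} = \dim C^{\bar k}_- + \dim\im d_{\bar k}$, and since $\Gamma$ is an isomorphism $C^{\overline{k+1}}\to C^{\bar k}$, $\dim C^{\bar k}_+ = \dim(\Ker d_{\bar k}\cap C^{\bar k}) $ computed on the other degree, i.e. $\dim C^{\bar k}_+ = \dim\Ker d_{\bar k}|_{C^{\overline{k+1}}}\! $; combining with $\im d_{\overline{k+1}}\subseteq C^{\bar k}$ and the analogous rank-nullity identity in degree $\overline{k+1}$, together with the already-known inclusions \eqref{E:imbp}, \eqref{E:imbm} which force these images to exhaust $C^{\bar k}_\pm$, yields $\dim C^{\bar k}_+ + \dim C^{\bar k}_- = \dim C^{\bar k}$. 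With trivial intersection and matching dimensions, \eqref{E:cbij} follows.

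For acyclicity, I would argue that $H^{\bar k}(d_{\bar k}) = (\Ker d_{\bar k}\cap C^{\bar k})/\im d_{\overline{k+1}}$, that $\Ker d_{\bar k}\cap C^{\bar k} = C^{\bar k}_-$ by definition, and that the surjectivity half of the bijectivity of $\B^-_{\bar k} = d_{\overline{k+1}}\circ\Gamma : C^{\bar k}_-\to C^{\bar k}_-$ shows $C^{\bar k}_- \subseteq \im d_{\overline{k+1}}$; hence the quotient is zero and $(C^\bullet,d)$ is acyclic. I expect the main obstacle to be purely bookkeeping: carefully tracking which degree each kernel/image lives in when $\Gamma$ swaps $C^{\bar 0}$ and $C^{\bar 1}$, and making sure the inclusions \eqref{E:imbp} and \eqref{E:imbm} are upgraded to equalities using the surjectivity of $\B_{\bar k}$ before the dimension count is invoked. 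Since the statement explicitly says the proof is a verbatim repetition of \cite[Lemma 5.2]{BK3}, in the write-up I would simply cite that reference and omit the details, exactly as the excerpt does.
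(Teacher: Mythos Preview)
Your proposal is correct in outline, and as you already observe, the paper itself gives no proof but simply cites \cite[Lemma~5.2]{BK3}; your intersection argument ($v\in C^{\bar k}_+\cap C^{\bar k}_-$ implies $\B_{\bar k}v=0$, hence $v=0$) and your acyclicity argument via surjectivity of $\B^-_{\bar k}$ are both the standard ones.

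The one paragraph that is over-engineered and reads as potentially circular is your dimension count. A cleaner route to $C^{\bar k}=C^{\bar k}_+ + C^{\bar k}_-$ is to note that for \emph{every} $v\in C^{\bar k}$ one has $\B_{\bar k}v=\Gamma d_{\bar k}v + d_{\overline{k+1}}\Gamma v$, with the first summand in $C^{\bar k}_+$ and the second in $C^{\bar k}_-$ by the inclusions \eqref{E:imbp}--\eqref{E:imbm}; surjectivity of $\B_{\bar k}$ then gives $C^{\bar k}=\im\B_{\bar k}\subseteq C^{\bar k}_+ + C^{\bar k}_-$ immediately, with no bookkeeping. Alternatively your dimension idea can be made non-circular in one line: once the intersection is trivial, $\dim C^{\bar k}_+ + \dim C^{\bar k}_- = \dim\Ker d_{\overline{k+1}} + \dim\Ker d_{\bar k} \ge \dim\im d_{\bar k} + \dim\Ker d_{\bar k} = \dim C^{\bar k}$, and the reverse inequality is automatic; the forced equality $\dim\Ker d_{\overline{k+1}}=\dim\im d_{\bar k}$ then yields acyclicity as a bonus, so the separate $\B^-_{\bar k}$-surjectivity step becomes unnecessary. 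Either way the argument is short, which is presumably why the paper (and \cite{BK3}) does not spell it out.
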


\subsection{Calculation of the refined torsion in case $\B$ is bijective}
In this subsection we compute the $\mathbb{Z}_2$-graded refined torsion in the case that $\B_{\bar{k}},k=0,1$ are bijective. Assume that the signature operators $\B_{\bar{k}},k=0,1$ are bijective. Then, by Lemma~\ref{L:ac}, the complex $(C^{\bullet},d)$ is acyclic. Note that $\Gamma \B^-_{\bar 0}\Gamma=\B^+_{\bar 1}$. Hence $\Det(\B^-_{\bar 0})=\Det(\B^+_{\bar 1})$. Then we have the following definition.

\begin{definition}
The {\em graded determinant} of the signature operator $\B_{\bar{0}}$ is defined by the formula
\begin{equation}\label{E:figraddet}
\Det_{\operatorname{gr}}(\B_{\bar{0}}):=\Det(\B^+_{\bar{0}})/\Det(-\B^-_{\bar{0}})=\Det(\B^+_{\bar{0}})/\Det(-\B^+_{\bar{1}}).
\end{equation}
\end{definition}

The following proposition is the $\mathbb{Z}_2$-graded analogue of \cite[Proposition 5.6]{BK3}. We modify the proof of \cite[Proposition 5.6]{BK3} slightly to fit our setting.
\begin{proposition}\label{P:rhogddet1}
Suppose that the signature operators $\B_{\bar k},k=0,1$ are invertible and, hence, the complex $(C^{\bullet},d)$ is acyclic. Then 
\begin{equation}\label{E:rhogddet}
\rho_{\Gamma}=\Det_{\operatorname{gr}}(\B_{\bar{0}}).
\end{equation}
\end{proposition}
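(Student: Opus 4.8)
The plan is to reduce the identity $\rho_\Gamma = \Det_{\gr}(\B_{\bar 0})$ to an explicit computation in the determinant lines, using the decomposition $C^{\bar k} = C^{\bar k}_+ \oplus C^{\bar k}_-$ from Lemma \ref{L:ac}. Since the complex is acyclic, $\Det(H^\bullet(d)) = \mathbf{k}$ canonically, so $\rho_\Gamma = \phi_{C^\bullet}(c_\Gamma)$ is simply a scalar, and we must show this scalar equals $\Det(\B^+_{\bar 0})/\Det(-\B^+_{\bar 1})$. First I would choose a convenient basis-type element: pick a nonzero $c^+_{\bar 0} \in \Det(C^{\bar 0}_+)$ and a nonzero $c^-_{\bar 0} \in \Det(C^{\bar 0}_-)$, and set $c_{\bar 0} := \mu_{C^{\bar 0}_+, C^{\bar 0}_-}(c^+_{\bar 0} \otimes c^-_{\bar 0}) \in \Det(C^{\bar 0})$. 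Then $\Gamma c_{\bar 0} \in \Det(C^{\bar 1})$ decomposes via $\Gamma(C^{\bar 0}_\pm) \subseteq C^{\bar 1}_\mp$ — indeed, from \eqref{E:cjpm}, $\Gamma(C^{\bar 0}_-) = C^{\bar 1}_+$ and $\Gamma(C^{\bar 0}_+) = C^{\bar 1}_-$ — so one can write $\Gamma c_{\bar 0}$ explicitly in terms of $\mu_{C^{\bar 1}_+, C^{\bar 1}_-}$ applied to the images of $c^\mp_{\bar 0}$.

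Next I would identify, within the chosen decomposition, the specific $A^{\bar k}$, $B^{\bar k}$ appearing in Subsection \ref{SS:isomo}. Because $\B^+_{\bar k} = \Gamma \circ d_{\bar k}$ is bijective on $C^{\bar k}_+$ and $\B^-_{\bar k} = d_{\overline{k+1}}\circ \Gamma$ is bijective on $C^{\bar k}_-$, one sees $d_{\bar k}$ is injective on $C^{\bar k}_+$ (a complement of $\Ker d_{\bar k} = C^{\bar k}_-$), so the natural choice is $A^{\bar k} = C^{\bar k}_+$, $B^{\bar k} = d_{\overline{k+1}}(C^{\overline{k+1}}_+) = C^{\bar k}_-$ (using acyclicity and $\im\B^-_{\bar k} \subseteq C^{\bar k}_-$ together with bijectivity), and $H^{\bar k} = 0$. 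Then the defining relation \eqref{E:cbahce} expresses $c_{\bar k}$ through $d_{\overline{k+1}}(a_{\overline{k+1}})$ and $a_{\bar k}$, and the combinatorial sign $\mathcal{N}(C^\bullet)$ from \eqref{E:ncbullet} becomes computable from $\dim A^{\bar k} = \dim C^{\bar k}_+ = \tfrac12 \dim C^{\bar k}$ (the last equality using that $\Gamma$ exchanges $C^{\bar k}_\pm$ with $C^{\overline{k+1}}_\mp$, hence $\dim C^{\bar 0}_+ = \dim C^{\bar 1}_-$ etc., combined with \eqref{E:cbij}). I would then track how $\phi_{C^\bullet}$ transforms $c_\Gamma = (-1)^{\mathcal{R}(C^\bullet)} c_{\bar 0} \otimes (\Gamma c_{\bar 0})^{-1}$: feeding in $c_{\bar 0}$ gives an $a_{\bar 0}$ via the $d_{\bar 1}$-image structure, feeding in $\Gamma c_{\bar 0}$ gives the corresponding data on the $\bar 1$ side, and the ratio of $d$-images to $a$'s produces exactly $\Det(\B^+_{\bar 0})$ in the numerator and $\Det(\B^+_{\bar 1})$ (with a sign, accounting for the $-\B^-_{\bar 0} = -\B^+_{\bar 1}$) in the denominator.

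The last step is the sign bookkeeping: collect the contributions of $\mathcal{R}(C^\bullet)$ from \eqref{E:rcbullet}, of $\mathcal{N}(C^\bullet)$ from \eqref{E:phic}, of the fusion signs \eqref{E:fusio1} incurred each time one reorders $B$, $H(=0)$, $A$ blocks or moves $\Gamma$ past a graded factor, and the sign $(-1)^{\dim C^{\bar 0}_-}$ from replacing $\Det(\B^-_{\bar 0})$ by $\Det(-\B^-_{\bar 0})$; one must check they all cancel modulo $2$, using $\dim C^{\bar 0} = \dim C^{\bar 1}$ and the relation $\dim C^{\bar k}_+ = \dim C^{\overline{k+1}}_-$. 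I expect this sign reconciliation to be the main obstacle — it is precisely the place where the $\mathbb{Z}_2$-graded normalizations $\mathcal{R}$, $\mathcal{N}$, $\mathcal{M}$ were designed to make things work, so the computation should close, but it requires care. The structural (non-sign) part is a verbatim adaptation of the proof of \cite[Proposition 5.6]{BK3}, and I would organize the write-up so that the sign identity is isolated into one displayed congruence at the end, verified via the identity \eqref{E:usid1} just as in the proofs of the preceding lemmas.
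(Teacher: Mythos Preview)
Your approach is essentially the same as the paper's: use the decomposition $C^{\bar k}=C^{\bar k}_-\oplus C^{\bar k}_+$ with $A^{\bar k}=C^{\bar k}_+$, $B^{\bar k}=C^{\bar k}_-$, $H^{\bar k}=0$, compute $\phi_{C^\bullet}(c_\Gamma)$ as a ratio of determinants of $\Gamma d_{\bar k}$, and isolate a single sign congruence at the end. The paper streamlines one step by choosing $c_{\bar k}=\mu_{C^{\bar k}_-,C^{\bar k}_+}(\Gamma a_{\overline{k+1}}\otimes a_{\bar k})$ directly (rather than picking $c^+_{\bar 0},c^-_{\bar 0}$ independently), which makes the scalar $h_{\bar k}=[\Gamma a_{\overline{k+1}}:d_{\overline{k+1}}a_{\overline{k+1}}]=\Det(\Gamma d_{\overline{k+1}})^{-1}$ fall out in one line; you would reach the same place after expressing your $c^-_{\bar 0}$ in terms of $d_{\bar 1}(a_{\bar 1})$.

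One point to correct: your claim $\dim C^{\bar k}_+=\tfrac12\dim C^{\bar k}$ is false in general. The relations $\Gamma(C^{\bar k}_\pm)=C^{\overline{k+1}}_\mp$ and $d_{\bar k}:C^{\bar k}_+\xrightarrow{\sim}C^{\overline{k+1}}_-$ both give $\dim C^{\bar 0}_+=\dim C^{\bar 1}_-$ and $\dim C^{\bar 0}_-=\dim C^{\bar 1}_+$, but nothing forces $\dim C^{\bar 0}_+=\dim C^{\bar 0}_-$. If you feed the incorrect equality into your sign computation it will not close. The paper's final congruence $\mathcal{F}(C^\bullet)\equiv 0\pmod 2$ uses only $\dim C^{\bar 0}_-=\dim C^{\bar 1}_+$ (to rewrite $\mathcal{R}(C^\bullet)$ in terms of $\dim C^{\bar 0}_+,\dim C^{\bar 1}_+$) and then reduces to $x^2+y^2+2xy+x+y\equiv 0\pmod 2$ with $x=\dim C^{\bar 0}_+$, $y=\dim C^{\bar 1}_+$; you should organize your sign check with these two independent dimensions rather than assuming they coincide.
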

\begin{proof}
We choose the decomposition \eqref{E:decompbha} to be $C^{\bar{k}}=C^{\bar{k}}_- \oplus C^{\bar{k}}_+$ and define elements $c_{\bar{k}}$ as follows. Fix a nonzero element $a_{\bar{k}} \in \Det(C^{\bar{k}}_+)$ and set
\[
c_{\bar{k}}= \mu_{C^{\bar{k}}_-,C^{\bar{k}}_+}(\Gamma a_{\overline{k+1}} \otimes a_{\bar{k}}),
\]
where $\mu_{C^{\bar{k}}_-,C^{\bar{k}}_+}$ is the fusion isomorphism, cf. \eqref{E:fusio}, see also \cite[(2-5)]{BK3}.
Note that, by \eqref{E:fusio1},
\begin{align}
\Gamma c_{\bar{k}} & =  \mu_{C^{\overline{k+1}}_+,C^{\overline{k+1}}_-}(a_{\overline{k+1}} \otimes \Gamma a_{\bar{k}})\nonumber \\
&= (-1)^{\dim C^{\overline{k+1}}_+ \cdot \dim C^{\overline{k+1}}_-} \cdot \mu_{C^{\overline{k+1}}_-,C^{\overline{k+1}}_+}(\Gamma a_{\bar{k}} \otimes a_{\overline{k+1}}) \nonumber \\
&= (-1)^{\dim C^{\overline{k+1}}_+ \cdot \dim C^{\overline{k+1}}_-} \cdot c_{\overline{k+1}}.
\end{align}
Thus, from \eqref{E:elec}, we obtain
\begin{align}\label{E:cgs}
c_{\Gamma} & =  (-1)^{\mathcal{R}(C^{\bullet})} \cdot c_{\bar{0}} \otimes (\Gamma c_{\bar{0}})^{-1} \nonumber \\
 & =  (-1)^{\mathcal{R}(C^{\bullet})+ \dim C^{\bar{1}}_+ \cdot \dim C^{\bar{1}}_-} \cdot c_{\bar{0}} \otimes c_{\bar{1}}^{-1}.
\end{align}
Hence, by \eqref{E:elerho} and \eqref{E:phic}, to compute $\rho_{\Gamma}$ we need to compute the elements $h_{\bar{k}} \in \Det(H^{\bar{k}}) \cong \bf{k}$.

If $L$ is a complex line and $x,y \in L$ with $y \not=0$, we denote by $[x:y] \in \bf{k}$ the unique number such that $x=[x:y]y$. Then
\begin{align}\label{E:hrel}
h_{\bar{k}} & =  [c_{\bar{k}}: \mu_{C^{\bar{k}}_-,C^{\bar{k}}_+}(d_{\overline{k+1}}a_{\overline{k+1}} \otimes a_{\bar{k}})] \nonumber \\
&= [\mu_{C^{\bar{k}}_-,C^{\bar{k}}_+}(\Gamma a_{\overline{k+1}} \otimes a_{\bar{k}}):\mu_{C^{\bar{k}}_-,C^{\bar{k}}_+}(d_{\overline{k+1}}a_{\overline{k+1}} \otimes a_{\bar{k}})] \nonumber \\
& =  [\Gamma a_{\overline{k+1}} : d_{\overline{k+1}}a_{\overline{k+1}}] \nonumber \\
&= [a_{\overline{k+1}}: \Gamma d_{\overline{k+1}} a_{\overline{k+1}}]\nonumber \\
& = \Det (\Gamma d_{\overline{k+1}})^{-1}.
\end{align}
Combining \eqref{E:phic} and \eqref{E:hrel}, we obtain
\begin{align}\label{E:phccc}
\Phi_{C^{\bullet}} ( c_{\bar{0}} \otimes c_{\bar{1}}^{-1} ) & = (-1)^{\mathcal{N}(C^{\bullet})} \cdot \Det(\Gamma d_{\overline{0}})\cdot \Det (\Gamma d_{\overline{1}})^{-1} \nonumber \\
& = (-1)^{\mathcal{N}(C^{\bullet}) + \dim C^{\bar{1}}_+} \cdot \Det (\Gamma d_{\overline{0}})\cdot \Det (- \Gamma d_{\overline{1}})^{-1}. 
\end{align}
Combining \eqref{E:elerho}, \eqref{E:figraddet}, \eqref{E:cgs} and \eqref{E:phccc}, we have
\[
\rho_{\Gamma}= \Phi_{C^{\bullet}}(c_{\Gamma}) = (-1)^{\mathcal{R}(C^{\bullet})+ \dim C^{\bar{1}}_+ \cdot \dim C^{\bar{1}}_- + \mathcal{N}(C^{\bullet}) + \dim C^{\bar{1}}_+} \cdot \Det_{\operatorname{gr}}(\B_{\bar{0}}).
\]
Hence, we are remaining to show that
\begin{equation}\label{E:fcbullet}
\mathcal{F}(C^{  \bullet}):=\mathcal{R}(C^{\bullet})+ \dim C^{\bar{1}}_+ \cdot \dim C^{\bar{0}}_+ + \mathcal{N}(C^{ {\bullet}}) + \dim C^{\bar{1}}_+ \equiv 0 \quad \text{mod} \ 2,
\end{equation}
here we use the fact that $\Gamma \B^+_{\bar{0}} \Gamma = \B^-_{\bar{1}}$.
By the fact that $\Gamma \B^+_{\bar{1}} \Gamma = \B^-_{\bar{0}}$ and \eqref{E:rcbullet}, we have 
\begin{align}\label{E:rcbullet1}
\mathcal{R}(C^{\bullet}) &=  \frac{1}{2}(\dim C^{\bar 0}_+ + \dim C^{\bar 0}_-)\cdot(\dim C^{\bar 0}_+ + \dim C^{\bar 0}_- +1)\nonumber \\
 & =  \frac{1}{2}(\dim C^{\bar 0}_+ + \dim C^{\bar 1}_+)\cdot (\dim C^{\bar 0}_+ + \dim C^{\bar 1}_+ +1) \nonumber \\
 & =  \frac{1}{2}(\dim C^{\bar 0}_+)^2 + \frac{1}{2}(\dim C^{\bar 1}_+)^2 + \dim C^{\bar 0}_+ \cdot \dim C^{\bar 1}_+ + \frac{1}{2} \dim C^{\bar 0}_+ + \frac{1}{2} \dim C^{\bar 1}_+.
\end{align}
Recall that, \eqref{E:ncbullet},
\begin{align}\label{E:ncbullet1}
\mathcal{N}(C^{\bullet}) & =  \frac{1}{2}\dim C^{\bar 0}_+(C^{\bar 0}_+ +1) + \frac{1}{2}\dim C^{\bar 1}_+(C^{\bar 1}_+ -1) \nonumber \\ & =  \frac{1}{2}(\dim C^{\bar 0}_+)^2 + \frac{1}{2} \dim C^{\bar 0}_+ +  \frac{1}{2}(\dim C^{\bar 1}_+)^2 - \frac{1}{2} \dim C^{\bar 1}_+.
\end{align}
Combining \eqref{E:fcbullet}, \eqref{E:rcbullet1} and \eqref{E:ncbullet1}, we have
\begin{equation}\label{E:fcbullet1}
\mathcal{F}(C^{  \bullet})=(\dim C^{\bar 0}_+)^2 + (\dim C^{\bar 1}_+)^2 + 2\dim C^{\bar 0}_+ \cdot \dim C^{\bar 1}_+ +  \dim C^{\bar 0}_+ + \dim C^{\bar 1}_+. 
\end{equation}
By \eqref{E:fcbullet1} and the fact that for any $x \in \mathbb{Z}, x(x+1) \equiv 0\, (\text{mod}\, 2)$, we obtain
\[
\mathcal{F}(C^{  \bullet})=0.
\]
\end{proof}

\subsection{Calculation of the refined torsion in case $\B$ is not bijective}
In this subsection we compute the $\mathbb{Z}_2$-graded refined torsion in the case that $\B_{\bar{k}},k=0,1$ are not bijective. Note that the operator $\B^2_{\bar{k}}$ maps $C^{\bar{k}}$ into itself. For an arbitrary interval $\mathcal{I}$, denote by $C^{\bar{k}}_{\mathcal{I}} \subset C^{\bar{k}}$ the linear span of the generalized eigenvectors of the restriction of $\B^2_{\bar{k}}$ to $C^{\bar{k}}$, corresponding to eigenvalue $\lambda$ with $\lambda \in \mathcal{I}$. Since both $\Gamma$ and $d_{\bar{k}}$ commute with $\B_{\bar{k}}$ (and, hence, with $\B^2_{\bar{k}}$), $\Gamma (C^{\bar{k}}_{\mathcal{I}}) \subset C^{\overline{k+1}}_{\mathcal{I}}$ and $d_{\bar{k}}(C^{\bar{k}}_{\mathcal{I}}) \subset C^{\overline{k+1}}_{\mathcal{I}}$. Hence, we obtain a subcomplex $C^{\bullet}_{\mathcal{I}}$ of $C^{\bullet}$ and the restriction $\Gamma_{\mathcal{I}}$ of $\Gamma$ to $C^{\bullet}_{\mathcal{I}}$ is a chirality operator for $C^{\bullet}_{\mathcal{I}}$. We denote by $H^{\bullet}_{\mathcal{I}}(d)$ the cohomology of the complex $(C^\bullet_{\mathcal{I}},d_{\mathcal{I}})$. Denote by $d_{\bar{k},\mathcal{I}}$ and $\B_{\bar{k},\mathcal{I}}$ the restrictions of $d_{\bar{k}}$ and $\B_{\bar{k}}$ to $C^{\bar{k}}_{\mathcal{I}}$. Then $\B_{\bar{k},\mathcal{I}} = \Gamma_{\mathcal{I}} d_{\bar{k},\mathcal{I}}+d_{\overline{k+1},\mathcal{I}}\Gamma_{\mathcal{I}}$.

\begin{lemma}
If $0 \notin \mathcal{I}$, then the complex $(C^{\bullet}_{\mathcal{I}},d_{\mathcal{I}})$ is acyclic.
\end{lemma}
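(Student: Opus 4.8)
The plan is to exploit the decomposition of $C^\bullet_{\mathcal I}$ into the $\pm$ pieces introduced in \eqref{E:cjpm}, applied to the \emph{subcomplex} rather than to $C^\bullet$ itself. Concretely, since $0\notin\mathcal I$, the restriction $\B_{\bar k,\mathcal I}^2$ has no kernel on $C^{\bar k}_{\mathcal I}$, so $\B_{\bar k,\mathcal I}$ is bijective; hence Lemma~\ref{L:ac}, applied to the complex $(C^\bullet_{\mathcal I},d_{\mathcal I})$ equipped with the chirality operator $\Gamma_{\mathcal I}$ and the signature operators $\B_{\bar k,\mathcal I}$, immediately gives that $(C^\bullet_{\mathcal I},d_{\mathcal I})$ is acyclic. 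So the real content is to verify that $\B_{\bar k,\mathcal I}$ is indeed invertible when $0\notin\mathcal I$.

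First I would observe that, by construction, $C^{\bar k}_{\mathcal I}$ is the span of generalized eigenvectors of $\B^2_{\bar k}$ for eigenvalues in $\mathcal I$, and that $\B^2_{\bar k}$ preserves this span and restricts there to an operator all of whose (generalized) eigenvalues lie in $\mathcal I$; since $0\notin\mathcal I$, this restriction $\B^2_{\bar k,\mathcal I}$ is invertible. Next I would note the elementary algebraic fact that $\B_{\bar k,\mathcal I}^2 = (\B_{\bar k}^2)|_{C^{\bar k}_{\mathcal I}}$ — this uses that $\B_{\bar k}$ commutes with $\B^2_{\bar k}$ (obvious) and with $\Gamma$ and $d_{\bar k}$, which is exactly what guarantees $\B_{\bar k}$ preserves $C^{\bar k}_{\mathcal I}$ in the first place (as already recorded in the paragraph preceding the lemma). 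Since $\B_{\bar k,\mathcal I}^2$ is invertible, $\B_{\bar k,\mathcal I}$ is invertible. Then Lemma~\ref{L:ac} applies verbatim to the pair $(C^\bullet_{\mathcal I},\Gamma_{\mathcal I})$ and yields both the acyclicity of $(C^\bullet_{\mathcal I},d_{\mathcal I})$ and the splitting $C^{\bar k}_{\mathcal I}=(C^{\bar k}_{\mathcal I})_+\oplus(C^{\bar k}_{\mathcal I})_-$.

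An equivalent, slightly more hands-on route that avoids even invoking Lemma~\ref{L:ac}: one can check directly that on $C^\bullet_{\mathcal I}$ the Laplacian-type operator $\B^2_{\bar k,\mathcal I}=(\Gamma d_{\bar k}+d_{\overline{k+1}}\Gamma)^2$ computes, up to the commuting factors, $(d_{\bar k}+\text{adjoint-like term})$, so that a class killed by $d_{\bar k,\mathcal I}$ and represented modulo $\operatorname{im} d_{\overline{k+1},\mathcal I}$ must be annihilated by $\B^2_{\bar k,\mathcal I}$, forcing it to vanish since $\B^2_{\bar k,\mathcal I}$ is injective. Either way the conclusion is the same. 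I expect the only mild subtlety to be the justification that passing to the generalized eigenspaces of $\B^2_{\bar k}$ is compatible with the complex structure and the chirality operator — but this has already been carried out in the text immediately before the lemma (the statements $\Gamma(C^{\bar k}_{\mathcal I})\subset C^{\overline{k+1}}_{\mathcal I}$ and $d_{\bar k}(C^{\bar k}_{\mathcal I})\subset C^{\overline{k+1}}_{\mathcal I}$), so the proof reduces to the one-line invertibility observation followed by a citation of Lemma~\ref{L:ac}.
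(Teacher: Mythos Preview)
Your proposal is correct. The paper, however, takes the direct route you sketch as your ``more hands-on'' alternative rather than invoking Lemma~\ref{L:ac}: it simply observes that for $x\in\Ker d_{\bar k,\mathcal I}$ one has $\B_{\bar k,\mathcal I}^2 x=(d_{\overline{k+1}}\Gamma)^2 x\in\operatorname{im} d_{\overline{k+1},\mathcal I}$, and since $\B_{\bar k,\mathcal I}^2$ is an automorphism of the (finite-dimensional) subspace $\Ker d_{\bar k,\mathcal I}$ this forces $\Ker d_{\bar k,\mathcal I}=\operatorname{im} d_{\overline{k+1},\mathcal I}$. Your primary approach---checking that $\B_{\bar k,\mathcal I}$ itself is invertible and then citing Lemma~\ref{L:ac}---is equally valid and slightly more modular (it also delivers the splitting \eqref{E:cbij} for free), while the paper's argument is marginally more self-contained and avoids the extra half-step of passing from invertibility of $\B^2$ to invertibility of $\B$. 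The two arguments have identical content.
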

\begin{proof}
If, for $k=0,1$, $x \in \Ker d_{\bar{k},\mathcal{I}}$, then $\B^2_{\bar{k},\mathcal{I}}x = (d_{\overline{k+1}}\Gamma)^2x \in \im d_{\overline{k+1},\mathcal{I}} \subset \Ker d_{\overline{k},\mathcal{I}}$. Since the operators $\B^2_{\bar{k},\mathcal{I}}:C^{\bar{k}}_{\mathcal{I}} \to C^{\bar{k}}_{\mathcal{I}},k=0,1$ are invertible, we conclude that $\Ker d_{\overline{k},\mathcal{I}} = \im d_{\overline{k+1},\mathcal{I}}$.
\end{proof}

For each $\lambda \ge 0$, $C^{\bullet}=C^{\bullet}_{[0,\lambda]} \oplus C^{\bullet}_{(\lambda,\infty)}$ and $H^{\bullet}(d)=0$ whereas $H^{\bullet}_{[0,\lambda]}(d) \cong H^{\bullet}(d)$. Hence there are canonical isomorphisms
\[
\Phi_\lambda : \Det(H^{\bullet}_{(\lambda,\infty)}(d)) \to \mathbb{C}, \qquad \Psi_{\lambda}: \Det(H^{\bullet}_{[0,\lambda]}(d)) \to \Det(H^{\bullet}(d)).
\]
In the sequel, we will write $t$ for $\Phi_\lambda(t) \in \mathbb{C}$. 

The following proposition is $\mathbb{Z}_2$-graded analogue of \cite[Proposition 5.10]{BK3}.
\begin{proposition}\label{P:indedec}
Let $(C^{\bullet},d)$ be a $\mathbb{Z}_2$-graded complex of finite dimensional $\bf{k}$-vector spaces and let $\Gamma$ be a chirality operator on $C^{\bullet}$. Then, for each $\lambda \ge 0$,
\[
\rho_{\Gamma}= \Det_{\operatorname{gr}}(\B_{(\lambda, \infty)}^{\bar{0}}) \cdot \rho_{\Gamma_{[0,\lambda]}},
\]
where we view $\rho_{\Gamma_{[0,\lambda]}}$ as an element of $\Det(H^{\bullet}(d))$ via the canonical isomorphism $\Psi_{\lambda}: \Det(H^{\bullet}_{[0,\lambda]}(d)) \to \Det(H^{\bullet}(d))$.
\end{proposition}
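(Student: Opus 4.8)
The plan is to decompose the complex $(C^\bullet, d)$ along the spectral splitting $C^\bullet = C^\bullet_{[0,\lambda]} \oplus C^\bullet_{(\lambda,\infty)}$ and apply the multiplicativity of the refined torsion under direct sums together with the explicit evaluation in the acyclic case. First I would observe, as recorded just before the statement, that $\Gamma$ and $d$ both commute with $\B^2_{\bar k}$, so the splitting is a splitting of $\mathbb{Z}_2$-graded complexes with chirality operator: $\widehat\Gamma = \Gamma_{[0,\lambda]} \oplus \Gamma_{(\lambda,\infty)}$ is a chirality operator on the direct sum, and by Lemma~\ref{L:disuto} (the multiplicativity lemma, \eqref{E:disuto}) we have
\[
\rho_\Gamma \;=\; \mu_{H^\bullet(d_{[0,\lambda]}),\, H^\bullet(d_{(\lambda,\infty)})}\big(\rho_{\Gamma_{[0,\lambda]}} \otimes \rho_{\Gamma_{(\lambda,\infty)}}\big).
\]
(To invoke that lemma I must check the hypothesis $\chi(C^\bullet_{[0,\lambda]}) = \chi(C^\bullet_{(\lambda,\infty)}) = 0$; since the whole complex is not assumed acyclic this needs the fact that the Euler characteristic of any finite-dimensional complex equals that of its cohomology, and here the statement is about $\Det(H^\bullet(d))$ being nonzero, i.e. we may as well assume $\chi(C^\bullet) = 0$, and the high-eigenvalue piece is acyclic hence has $\chi = 0$, forcing $\chi$ of the low piece to vanish too.)

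Next I would handle the high-eigenvalue factor. Since $0 \notin (\lambda,\infty)$, the preceding lemma shows $(C^\bullet_{(\lambda,\infty)}, d_{(\lambda,\infty)})$ is acyclic; moreover $\B_{\bar k, (\lambda,\infty)}$ is invertible there because $\B^2_{\bar k}$ acts invertibly on the generalized eigenspaces for nonzero eigenvalues. Hence Proposition~\ref{P:rhogddet1} applies to this subcomplex and gives $\rho_{\Gamma_{(\lambda,\infty)}} = \Det_{\operatorname{gr}}(\B^{\bar 0}_{(\lambda,\infty)})$, a nonzero scalar in $\bf{k}$ (identified with $\mathbb{C}$ via $\Phi_\lambda$ as in the paragraph before the statement). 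Since one of the two tensor factors is now a scalar, the graded fusion isomorphism $\mu_{H^\bullet(d_{[0,\lambda]}), H^\bullet(d_{(\lambda,\infty)})}$ reduces to scalar multiplication composed with $\Psi_\lambda$, giving
\[
\rho_\Gamma \;=\; \Det_{\operatorname{gr}}(\B^{\bar 0}_{(\lambda,\infty)}) \cdot \Psi_\lambda\big(\rho_{\Gamma_{[0,\lambda]}}\big),
\]
which is exactly the claimed formula once we identify $\rho_{\Gamma_{[0,\lambda]}}$ with its image under $\Psi_\lambda$.

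The main obstacle, and the step requiring genuine care, is the sign bookkeeping: the multiplicativity lemma \eqref{E:disuto} is stated with the \emph{graded} fusion isomorphism $\mu_{C^\bullet,\widetilde C^\bullet}$ carrying the sign $(-1)^{\mathcal{M}(C^\bullet,\widetilde C^\bullet)}$, and $\mu_{H^\bullet(d),H^\bullet(\widetilde d)}$ likewise carries a sign $(-1)^{\mathcal{M}(H^\bullet,\widetilde H^\bullet)}$. When $\widetilde C^\bullet = C^\bullet_{(\lambda,\infty)}$ is acyclic, $\widetilde H^\bullet = 0$, so $\mathcal{M}(H^\bullet(d_{[0,\lambda]}), H^\bullet(d_{(\lambda,\infty)})) = \dim H^{\bar 1}_{[0,\lambda]} \cdot \dim \widetilde H^{\bar 0} = 0$, and the fusion of a determinant line with $\Det(0) = \bf{k}$ is canonically the identity after multiplication by the scalar. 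So the sign actually vanishes in this application, but I would state this explicitly rather than leave it implicit. Once the acyclic factor is isolated and Proposition~\ref{P:rhogddet1} is applied, the remaining identity is essentially formal, so the proof is short; I would also remark (for later use in the paper) that $\Det_{\operatorname{gr}}(\B^{\bar 0}_{(\lambda,\infty)})$ is precisely what the $\zeta$-regularized graded determinant of the full operator reduces to on the finite-dimensional high-eigenvalue part, which is why this decomposition is the right bridge to the analytic setting.
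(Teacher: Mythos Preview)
Your approach is correct and matches the paper's own (very terse) proof, which simply cites Definition~\ref{D:elerho1}, Proposition~\ref{P:rhogddet1}, and the fusion isomorphism \eqref{E:isofu}; you have unpacked exactly what is implicit there, namely the multiplicativity formula \eqref{E:disuto} applied to the spectral splitting, followed by the acyclic calculation on $C^\bullet_{(\lambda,\infty)}$. One small simplification: your worry about the Euler-characteristic hypothesis is unnecessary, since the chirality operator $\Gamma$ gives an isomorphism $C^{\bar 0}_{\mathcal I} \cong C^{\bar 1}_{\mathcal I}$ for each interval $\mathcal I$, so $\chi(C^\bullet_{[0,\lambda]}) = \chi(C^\bullet_{(\lambda,\infty)}) = 0$ automatically.
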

\begin{proof}
Recall the natural isomorphism 
\begin{equation}\label{E:isofu}
\Det(H^{\bar{k}}_{[0,\lambda]}(d) \otimes H^{\bar{k}}_{(\lambda,\infty)}(d)) \cong \Det(H^{\bar{k}}_{[0,\lambda]}(d) \oplus H^{\bar{k}}_{(\lambda,\infty)}(d))=\Det(H^{\bar{k}}(d))
\end{equation}
From Definition~\ref{D:elerho1}, Proposition~\ref{P:rhogddet1} and \eqref{E:isofu}, we obtain the result.
\end{proof}

\section{Graded determinant of the twisted odd signature operator}\label{S:gdtoso}
In this section we define the graded determinant of the odd signature operator, cf. \cite{APS2,Gi}, twisted by a flux form $H$, of a flat vector bundle $E$ over a closed oriented odd dimensional manifold $M$. We use this graded determinant to define an element of the determinant line of the twisted de Rham cohomology of the vector bundle $E$. We also study the relationship between this graded determinant and the $\eta$-invariant of the twisted odd signature operator.
 
\subsection{The twisted odd signature operator}
Let $M$ be a closed oriented smooth manifold of odd dimension $m=2r-1$ and let $E$ be a complex vector bundle over $M$ endowed with a flat connection $\nabla$. We denote by $\Omega^p(M,E)$ the space of $p$-forms with values in the flat bundle $E$, i.e., $\Omega^p(M,E)=\Gamma(\wedge^p(T^*M)_{\mathbb{R}}\otimes E)$ and by
\[
\nabla : \Omega^\bullet(M,E) \to \Omega^{\bullet+1}(M,E)
\]
the covariant differential induced by the flat connection on $E$. Fix a Riemannian metric $g^M$ on $M$ and let $\star : \Omega^\bullet(M,E) \to \Omega^{m-\bullet}(M,E)$ denote the Hodge $\star$-operator. We choose a Hermitian metric $h^E$ so that together with the Riemannian metric $g^M$ we can define a scalar product $<\cdot,\cdot>_M$ on $\Omega^{ \bullet}(M,E)$. Define the chirality operator $\Gamma = \Gamma(g^M):\Omega^\bullet(M,E) \to \Omega^\bullet(M,E)$ by the formula, cf. \cite[(7-1)]{BK3},
\begin{equation}\label{E:chirality}
\Gamma \omega := i^r(-1)^{\frac{q(q+1)}{2}} \star \omega, \quad \omega \in \Omega^q(M,E), 
\end{equation}
where $r$ given as above by $r=\frac{m+1}{2}$. The numerical factor in \eqref{E:chirality} has been chosen so that $\Gamma^2=\operatorname{Id}$, cf. Proposition 3.58 of \cite{BGV}.

Assume that $H$ is an odd degree closed differential form on $M$. Let $\Omega^{\bar{0}}(M,E):=\Omega^{\even}(M,E)$, $\Omega^{\bar{1}}(M,E):=\Omega^{\odd}(M,E)$ and $\nabla^H:=\nabla+H \wedge \cdot$. We assume that $H$ does not contain a 1-form component, which can be absorbed in the flat connection $\nabla$. 

\begin{definition}
The twisted odd signature operator is the operator
\begin{equation}\label{E:toso}
\B^H=\B(\nabla^H,g^M) := \Gamma \nabla^H + \nabla^H \Gamma : \Omega^{  \bullet}(M,E) \to \Omega^{  \bullet}(M,E).
\end{equation}
We denote by $\B^H_{\bar k}$ the restriction of $\B^H$ to the space $\Omega^{\bar k}(M,E),k=0,1$.
\end{definition}

\subsection{$\zeta$-function and $\zeta$-regularized determinant}
In this subsection we briefly recall some definitions of $\zeta$-regularized determinants of non self-adjoint elliptic operators. See \cite[Section 6]{BK3} for more details. Let $D:C^\infty(M,E) \to C^\infty(M,E)$ be an elliptic differential operator of order $n \ge 1$. Assume that $\theta$ is an Agmon angle, cf. for example, Definition 6.3 of \cite{BK3}. Let $\Pi: L^2(M,E) \to L^2(M,E)$ denote the spectral projection of $D$ corresponding to all nonzero eigenvalues of $D$. The $\zeta$-function $\zeta_\theta(s,D)$ of $D$ is defined as follows
\begin{equation}\label{E:zetdef}
\zeta_\theta(s,D)= \operatorname{Tr} \Pi D_\theta^{-s}, \quad \operatorname{Re} s > \frac{\dim M}{n}.
\end{equation} It was shown by Seeley \cite{Se} (See also \cite{Sh}) that $\zeta_\theta(s,D)$ has a meromorphic extension to the whole complex plane and that $0$ is a regular value of $\zeta_\theta(s,D)$.

\begin{definition}
The $\zeta$-regularized determinant of $D$ is defined by the formula
\[
\Det'_\theta(D):= \exp \Big(\, -\frac{d}{ds}\Big|_{s=0}\zeta_\theta(s,D)\,\Big). 
\]
\end{definition}
We denote by 
\[
\LDet'_\theta(D)= - \frac{d}{ds}\Big|_{s=0}\zeta_\theta(s,D).
\]

Let $Q$ be a $0$-th order pseudo-differential projection, ie. a $0$-th order pseudo-differential operator satisfying $Q^2=Q$. We set
\begin{equation}\label{E:zetadefpro}
\zeta_\theta(s,Q,D)= \operatorname{Tr} Q \Pi D_\theta^{-s}, \quad \operatorname{Re} s > \frac{\dim M}{n}.
\end{equation}
The function $\zeta_\theta(s,Q,D)$ also has a meromorphic extension to the whole complex plane and, by Wodzicki, \cite[Section 7]{Wo1}, it is regular at $0$.

\begin{definition}
Suppose that $Q$ is a $0$-th order pseudo-differential projection commuting with $D$. Then $V:=\operatorname{Im} Q$ is $D$ invariant subspace of $C^\infty(M,E)$. The $\zeta$-regularized determinant of the restriction $D|_V$ of $D$ to $V$ is defined by the formula
\[
\Det'_\theta(D|_V):= e^{\LDet'_\theta(D|_V)},
\]
where 
\begin{equation}\label{E:ldetv}
\LDet'_\theta(D|_V)=-\frac{d}{ds}\Big|_{s=0} \zeta_\theta(s,Q,D).
\end{equation}
\end{definition}

\begin{remark}
The prime in $\Det'_\theta$ and $\LDet'_\theta$ indicates that we ignore the zero eigenvalues of the operator in the definition of the regularized determinant. If the operator is invertible we usually omit the prime and write $\Det_\theta$ and $\LDet_\theta$ instead.
\end{remark}

\subsection{The graded determinant of the twisted odd signature operator}\label{SS:gdto}
Note that for each $k=0,1$, the operator $(\B^H)^2$ maps $\Omega^{\bar k}(M,E)$ to itself. Suppose that $\mathcal{I}$ is an interval of the form $[0,\lambda],(\lambda, \mu],$ or $(\lambda,\infty)$ $(\mu > \lambda \ge 0)$. Denote by $\Pi_{(\B^H)^2,\mathcal{I}}$ the spectral projection of $(\B^H)^2$ corresponding to the set of eigenvalues, whose absolute values lie in $\mathcal{I}$. set
\[
 \Omega^{  \bullet}_{\mathcal{I}}(M,E) := \Pi_{(\B^H)^2,\mathcal{I}}(\Omega^{  \bullet}(M,E)) \subset \Omega^{  \bullet}(M,E).
\]
If the interval $\mathcal{I}$ is bounded, then, cf. Section 6.10 of \cite{BK3}, the space $ \Omega^{  \bullet}_{\mathcal{I}}(M,E)$ is finite dimensional.

For each $k=0,1$, set
\begin{equation}\label{E:split}
\begin{array}{l}
\Omega^{\bar k}_{+,\mathcal{I}}(M,E) := \Ker(\nabla^H \Gamma) \cap  \Omega^{\bar k}_{\mathcal{I}}(M,E) = \big(\Gamma(\Ker \nabla^H) \big) \cap \Omega^{\bar k}_{\mathcal{I}}(M,E);\\

\\
\Omega^{\bar k}_{-,\mathcal{I}}(M,E) := \Ker(\Gamma \nabla^H ) \cap  \Omega^{\bar k}_{\mathcal{I}}(M,E) = \Ker \nabla^H \cap \Omega^{\bar k}_{\mathcal{I}}(M,E).
\end{array}
\end{equation}
Then
\begin{equation}\label{E:splitt1}
\Omega^{\bar k}_{\mathcal{I}}(M,E) = \Omega^{\bar k}_{+,\mathcal{I}}(M,E) \oplus \Omega^{\bar k}_{-,\mathcal{I}}(M,E) \quad \text{if} \quad 0 \notin \mathcal{I}.
\end{equation}
We consider the decomposition \eqref{E:splitt1} as a grading of the space $\Omega^{\bar k}_{\mathcal{I}}(M,E)$, and refer to $\Omega^{\bar k}_{+,\mathcal{I}}(M,E)$ and $\Omega^{\bar k}_{-,\mathcal{I}}(M,E)$ as the positive and negative subspaces of $\Omega^{\bar k}_{\mathcal{I}}(M,E)$. Denote by $\B^{H}_\mathcal{I}$ and $\B^{H}_{\bar{k},\mathcal{I}}$ the restrictions of $\B^H$ to the subspaces $\Omega^{  \bullet}_{\mathcal{I}}(M,E)$ and $\Omega^{\bar k}_{\mathcal{I}}(M,E)$ respectively. Then $\B^{H}_{\bar{k},\mathcal{I}}$ maps $\Omega^{\bar k}_{\pm,\mathcal{I}}(M,E)$ to itself. Let $\B^{H,\pm}_{\bar{k},\mathcal{I}}$ denote the restriction of $\B^{H}_{\bar{k},\mathcal{I}}$ to the subspace $\Omega^{\bar k}_{\pm,\mathcal{I}}(M,E)$. Clearly, the operator $\B^{H,\pm}_{\bar{k},\mathcal{I}}$ are bijective whenever $0 \notin \mathcal{I}$. Note that $\Gamma \B^{H,-}_{\bar{0},\mathcal{I}}\Gamma=\B^{H,+}_{\bar{1},\mathcal{I}}$. Hence $\Det_\theta(\B^{H,-}_{\bar{0},\mathcal{I}})=\Det_\theta(\B^{H,+}_{\bar{1},\mathcal{I}})$. Then we have the following definition.

\begin{definition}\label{D:graddet01}
Suppose that $0 \notin \mathcal{I}$. The graded determinant of the operator $\B^{H,\mathcal{I}}_{\bar 0}$ is defined by 
\begin{equation}\label{E:graddet}
\Det_{\gr,\theta}(\B^{H}_{\bar{0},\mathcal{I}}) \, := \, \frac{\Det_\theta(\B^{H,+}_{\bar{0},\mathcal{I}})}{\Det_\theta(-\B^{H,-}_{\bar{0},\mathcal{I}})} \, = \, \frac{\Det_\theta(\B^{H,+}_{\bar{0},\mathcal{I}})}{\Det_\theta(-\B^{H,+}_{\bar{1},\mathcal{I}})} \in \mathbb{C} \backslash \{ 0 \},
\end{equation}
where $\Det_\theta$ denotes the $\zeta$-regularized determinant associated to the Agmon angle $\theta \in (-\pi,0)$, cf. for example, Section 6 of \cite{BK3}.
\end{definition}
We define, cf. \eqref{E:ldetv},
\begin{equation}\label{E:ldetgr}
\LDet_{\gr,\theta}(\B^{H}_{\bar{0},\mathcal{I}})=\LDet_\theta(\B^{H,+}_{\bar{0},\mathcal{I}})-\LDet_\theta(-\B^{H,-}_{\bar{0},\mathcal{I}})=\LDet_\theta(\B^{H,+}_{\bar{0},\mathcal{I}})-\LDet_\theta(-\B^{H,+}_{\bar{1},\mathcal{I}}).
\end{equation}
It follows from formula (6-17) of \cite{BK3} that \eqref{E:graddet} is independent of the choice of $\theta \in (-\pi, 0)$.

\subsection{The canonical element of the determinant line} It is not difficult to check that $(\nabla^H)^2=0$.
Clearly, $\nabla^H : \Omega^{\bar k}(M,E) \to \Omega^{\overline{k+1}}(M,E)$ and $\Gamma : \Omega^{\bar k}(M,E) \to \Omega^{\overline{k+1}}(M,E)$. Hence we can consider the following twisted de Rham complex with chirality operator $\Gamma$:
\begin{equation}\label{E:ztcomplex}
\big(\, \Omega^{  \bullet}(M,E), \nabla^H \, \big): \cdots \stackrel{\nabla^H}{\longrightarrow} \Omega^{\bar 0}(M,E ) \stackrel{\nabla^H}{\longrightarrow} \Omega^{\bar 1}(M,E ) \stackrel{\nabla^H}{\longrightarrow} \Omega^{\bar 0}(M,E )\stackrel{\nabla^H}{\longrightarrow} \cdots.
\end{equation}
We define the {\em twisted de Rham cohomology groups of $(\Omega^{\bullet}(M,E),\nabla^H)$} as
\[
H^{\bar{k}}(M,E,H) \, \equiv  \, H^{\bar{k}}(\nabla^H) \, := \, \frac{\Ker(\nabla^H : \Omega^{\bar{k}}(M,E) \to \Omega^{\overline{k+1}}(M,E))}{\im (\nabla^H : \Omega^{\overline{k+1}}(M,E) \to \Omega^{\bar{k}}(M,E))}, \quad k=0,1.
\] 
The groups $H^{\bar{k}}(M,E,H),k=0,1$ are independent of the choice of the Riemannian metric on $M$ or the Hermitian metric on $E$. Suppose that $H$ is repalced by $H'=H-dB$ for some $B \in \Omega^{\bar{0}}(M)$, there is an isomorphism $\varepsilon_B:=e^B \wedge \cdot : \Omega^{  \bullet}(M,E) \to \Omega^{  \bullet}(M,E)$ satisfying 
\[
\varepsilon_B \circ \nabla^H = \nabla^{H'} \circ \varepsilon_B.
\]
Therefore $\varepsilon_B$ induces an isomorphism on the twisted de Rham cohomology, also denote by $\varepsilon_B$,
\begin{equation}\label{E:epsiso}
\varepsilon_B: H^\bullet(M,E,H) \to H^\bullet(M,E,H').
\end{equation}

Denote by $(\nabla^{H}_{\bar k})^*$ the adjoint of $\nabla^H_{\bar k}$ with respect to the scalar product $<\cdot,\cdot>_M$. Then the Laplacians
\[
\Delta_{\bar k}=\Delta^H_{\bar k}:=(\nabla_{\bar k}^{H})^*\nabla_{\bar k}^H + \nabla_{\overline{k+1}}^H(\nabla_{\overline{k+1}}^{H})^*, \quad k=0,1
\]
are elliptic operators and therefore the complex \eqref{E:ztcomplex} is elliptic. By Hodge theory, we have the isomorphism $\Ker \Delta_{\bar k} \cong H^{\bar{k}}(M,E,H),k=0,1$. For more details of the twisted de Rham cohomology, cf. for example \cite{MW}.

Since $\nabla^H$ commutes with $\B^H$, the subspace $\Omega^{  \bullet}_{\mathcal{I}}(M,E)$ is a subcomplex of the twisted de Rham complex $(\Omega^{\bullet}(M,E), \nabla^H)$. Clearly, for each $\lambda \ge 0$, the complex $\Omega^{  \bullet}_{(\lambda, \infty)}(M,E)$ is acyclic. Since
\begin{equation}\label{E:omespl}
\Omega^{  \bullet}(M,E) = \Omega^{ \bullet}_{[0,\lambda]}(M,E) \oplus \Omega^{ \bullet}_{(\lambda,\infty)}(M,E),
\end{equation}
the cohomology $H^{  \bullet}_{[0,\lambda]}(\nabla^H) \equiv H^{  \bullet}_{[0,\lambda]}(M,E,H)$ of the complex $\big( \Omega^{  \bullet}(M,E), \nabla^H \big)$ is naturally isomorphic to the cohomology $H^{\bullet}(M,E,H)$. Let $\Gamma_{\mathcal{I}}$ denote the restriction of $\Gamma$ to $\Omega^{  \bullet}_{\mathcal{I}}(M,E)$. For each $\lambda \ge 0$, let
\begin{equation}\label{E:canonele}
\rho_{\Gamma_{[0,\lambda]}} = \rho_{\Gamma_{[0,\lambda]}}(\nabla^H,g^M) \in \Det\big(H^{  \bullet}_{[0,\lambda]}(M,E,H)\big)
\end{equation}
denote the refined torsion of the twisted finite dimensional complex $\big( \Omega^{  \bullet}_{[0,\lambda]}(M,E), \nabla^H \big)$ corresponding to the chirality operator $\Gamma_{[0,\lambda]}$, cf. Definition~\ref{D:elerho1}.
We view $\rho_{\Gamma_{[0,\lambda]}}$ as an element of $\Det\big(H^\bullet(M,E,H)\big)$ via the canonical isomorphism between $H^\bullet(M,E,H)$ and $H^{\bullet}_{[0,\lambda]}(M,E,H)$.

\begin{proposition}
Assume that $\theta \in (-\pi,0)$ is an Agmon angle for the operator $\B^H_{\bar 0}$. Then the element 
\begin{equation}\label{E:rhoh}
\rho_{H}=\rho(\nabla^H,g^M):= \Det_{\gr,\theta}(\B^{H}_{\bar{0},(\lambda,\infty)}) \cdot \rho_{\Gamma_{[0,\lambda]}} \in \Det\big(H^\bullet(M,E,H)\big)
\end{equation}
is independent of the choice $\lambda \ge 0$. Further, $\rho_H$ is independent of the choice of the Agmon angle $\theta \in (-\pi,0)$ of $\B^H_{\bar 0}$.
\end{proposition}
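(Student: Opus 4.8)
The plan is to establish the two independence assertions in turn: I treat the $\lambda$-dependence first, and then reduce the $\theta$-dependence to the already available $\theta$-independence of the graded $\zeta$-determinant. Note throughout that an Agmon angle for $\B^H_{\bar 0}$ is automatically an Agmon angle for each of its restrictions to an invariant subspace, so all the graded determinants occurring below are defined.

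\emph{Independence of $\lambda$.} Taking $\lambda$ and $\mu$ to be the smaller and the larger of two given nonnegative numbers, it suffices to fix $0\le\lambda\le\mu$ and show that the element \eqref{E:rhoh} formed with $\lambda$ agrees with the one formed with $\mu$. The input is the direct sum decomposition
\[
\Omega^{\bullet}_{(\lambda,\infty)}(M,E)=\Omega^{\bullet}_{(\lambda,\mu]}(M,E)\oplus\Omega^{\bullet}_{(\mu,\infty)}(M,E)
\]
into $\B^H$-invariant subcomplexes, which is moreover compatible with the splitting of each $\Omega^{\bar k}_{\mathcal I}(M,E)$ into its positive and negative subspaces \eqref{E:splitt1}. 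Since the $\zeta$-function of an operator is additive under such a decomposition of the ambient space into invariant subspaces, $\LDet_{\gr,\theta}$ is additive and hence
\[
\Det_{\gr,\theta}(\B^{H}_{\bar 0,(\lambda,\infty)})=\Det_{\gr,\theta}(\B^{H}_{\bar 0,(\lambda,\mu]})\cdot\Det_{\gr,\theta}(\B^{H}_{\bar 0,(\mu,\infty)}).
\]

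Next I would observe that $\Omega^{\bullet}_{(\lambda,\mu]}(M,E)$ is finite dimensional and $\B^H$ is invertible on it, so the $\zeta$-regularized determinants of $\B^{H,+}_{\bar 0,(\lambda,\mu]}$ and of $-\B^{H,-}_{\bar 0,(\lambda,\mu]}$ reduce to ordinary determinants of finite-dimensional operators; consequently $\Det_{\gr,\theta}(\B^{H}_{\bar 0,(\lambda,\mu]})$ equals the algebraic graded determinant $\Det_{\gr}(\B^{H}_{\bar 0,(\lambda,\mu]})$ of \eqref{E:figraddet}. Now apply Proposition~\ref{P:indedec} to the finite-dimensional $\mathbb{Z}_2$-graded complex $(\Omega^{\bullet}_{[0,\mu]}(M,E),\nabla^H)$ with chirality operator $\Gamma_{[0,\mu]}$, splitting at the level $\lambda\le\mu$: since the part of $\Omega^{\bullet}_{[0,\mu]}(M,E)$ on which $(\B^H)^2$ has eigenvalues of absolute value $>\lambda$ is precisely $\Omega^{\bullet}_{(\lambda,\mu]}(M,E)$, the proposition gives
\[
\rho_{\Gamma_{[0,\mu]}}=\Det_{\gr}(\B^{H}_{\bar 0,(\lambda,\mu]})\cdot\rho_{\Gamma_{[0,\lambda]}},
\]
both sides being read in $\Det\big(H^{\bullet}(M,E,H)\big)$ via the canonical identifications $H^{\bullet}_{[0,\lambda]}(M,E,H)\cong H^{\bullet}_{[0,\mu]}(M,E,H)\cong H^{\bullet}(M,E,H)$. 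Substituting the last two displays into \eqref{E:rhoh} yields $\Det_{\gr,\theta}(\B^{H}_{\bar 0,(\lambda,\infty)})\cdot\rho_{\Gamma_{[0,\lambda]}}=\Det_{\gr,\theta}(\B^{H}_{\bar 0,(\mu,\infty)})\cdot\rho_{\Gamma_{[0,\mu]}}$, which is the desired equality.

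\emph{Independence of $\theta$, and the main obstacle.} Fix $\lambda\ge 0$. The element $\rho_{\Gamma_{[0,\lambda]}}$ is built solely from the chirality operator $\Gamma_{[0,\lambda]}$ and does not refer to an Agmon angle, so the only possible $\theta$-dependence of \eqref{E:rhoh} lies in the factor $\Det_{\gr,\theta}(\B^{H}_{\bar 0,(\lambda,\infty)})$; by formula (6-17) of \cite{BK3}, recalled just after Definition~\ref{D:graddet01}, this factor does not depend on the choice of $\theta\in(-\pi,0)$, which finishes the proof. There is no genuine analytic difficulty here once Proposition~\ref{P:indedec} and the $\theta$-independence of the graded $\zeta$-determinant are granted; the one point requiring care is the matching of the $\zeta$-regularized graded determinant on the finite-dimensional block $\Omega^{\bullet}_{(\lambda,\mu]}(M,E)$ with the purely algebraic graded determinant of Section~\ref{S:rtrev}, together with the identification of that block with the ``large eigenvalue'' part appearing in Proposition~\ref{P:indedec}, so that all cohomological identifications used are the canonical ones.
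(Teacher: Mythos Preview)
Your argument is correct and follows essentially the same route as the paper: the multiplicativity $\Det_{\gr,\theta}(\B^{H}_{\bar 0,(\lambda,\infty)})=\Det_{\gr,\theta}(\B^{H}_{\bar 0,(\lambda,\mu]})\cdot\Det_{\gr,\theta}(\B^{H}_{\bar 0,(\mu,\infty)})$, the finite-dimensional Proposition~\ref{P:indedec}, and formula~(6-17) of \cite{BK3} for the Agmon-angle independence. Your write-up is simply more explicit than the paper's, in particular in spelling out why the $\zeta$-regularized graded determinant on the finite-dimensional block $\Omega^{\bullet}_{(\lambda,\mu]}(M,E)$ agrees with the algebraic one of \eqref{E:figraddet} and in tracking the canonical cohomological identifications.
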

\begin{proof}
Clearly, for $0 \le \lambda \le \mu$, we have
\begin{equation}\label{E:indlam}
\Det_{\operatorname{gr}}(\B_{\bar{0},(\lambda, \infty)}^{H})=\Det_{\operatorname{gr}}(\B_{\bar{0},(\lambda, \mu]}^{H}) \cdot \Det_{\operatorname{gr}}(\B_{\bar{0},(\mu, \infty)}^{H}).
\end{equation}
From Proposition~\ref{P:indedec}, \eqref{E:indlam} and (6-17) of \cite{BK3}, we obtain the result.
\end{proof}

\subsection{The $\eta$-invariant}
In this subsection we recall the definition of the $\eta$-invariant of a non-self-adjoint elliptic operator $D$, cf. \cite{Gi}, \cite[Subsection 6.15]{BK3}. 

\begin{definition}
Let $D:C^\infty(M,E) \to C^\infty(M,E)$ be an elliptic differential operator of order $n \ge 1$ whose leading symbol is self-adjoint with respect to some given Hermitian metric on $E$. Assume that $\theta$ is an Agmon angle for $D$, cf. Definition 6.3 of \cite{BK3}. Let $\Pi_>$ (resp. $\Pi<$) be a pseudo-differential projection whose image contains the span of all generalized eigenvectors of $D$ corresponding to eigenvalues $\lambda$ with $\operatorname{Re} >0$ (resp. with $\operatorname{Re}  <0$) and whose kernel contains the span of all generalized eigenvectors of $D$ corresponding to eigenvalues $\lambda$ with $\operatorname{Re} \le 0$ (resp. with $\operatorname{Re} \ge 0$). We define the $\eta$-function of $D$ by the formula
\[
\eta_\theta(s,D)=\zeta_\theta(s, \Pi_>,D)- \zeta_\theta(s, \Pi_<,-D).
\]
\end{definition} Note that, by the above definition, the purely imaginary eigenvalues of $D$ do not contribute to $\eta_\theta(s,D)$.

It was shown by Gilkey, \cite{Gi}, that $\eta_\theta(s,D)$ has a meromorphic extension to the whole complex plane $\mathbb{C}$ with isolated simple poles, and that it is regular at $0$. Moreover, the number $\eta_\theta(0,D)$ is independent of the Agmon angle $\theta$.

Since the leading symbol of $D$ is self-adjoint, the angles $\pm \pi/2$ are principal angles for $D$ cf. \cite[Definition 6.2]{BK3}. Hence, there are at most finitely many eigenvalues of $D$ on the imaginary axis. Let $m_+(D)$ (resp. $m_-(D)$) denote the number of eigenvalues of $D$, counted with their algebraic multiplicities, on the positive (resp. negative) part of the imaginary axis. Let $m_0(D)$ denote the algebraic multiplicity of $0$ as an eigenvalue of $D$.

\begin{definition}\label{D:etainv}
The $\eta$-invariant $\eta(D)$ of $D$ is defined by the formula
\begin{equation}\label{E:etainv11}
\eta(D)=\frac{\eta_\theta(0,D)+m_+(D)-m_-(D)+m_0(D)}{2}.
\end{equation}
\end{definition}
Since $\eta_\theta(0,D)$ is indenpendent of the choice of the Agmon angle $\theta$ for $D$, cf. \cite{Gi}, so is $\eta(D)$. Note that the defintion of $\eta(D)$ is slightly different from the one proposed by Gilkey in \cite{Gi}. See \cite[Remark 2.5]{BK3}. 

Denote by $\eta(\nabla^H)=\eta(\B^H_{\bar 0})$ the $\eta$-invariant of the restriction $\B^H_{\bar 0}$ of the twisted odd signature operator $\B^H$ to $\Omega^{\bar 0}(M,E)$.

\subsection{Relationship with the $\eta$-invariant}
In this subsection we study the relationship between \eqref{E:graddet} and the $\eta$-invariant of $\B_{\bar{0},(\lambda, \infty)}^{H}$.

To simplify the notation set
\begin{equation}\label{E:etano}
\eta_\lambda(\nabla^H):=\eta(\B_{\bar{0},(\lambda, \infty)}^{H})
\end{equation}
and
\begin{equation}\label{E:xino}
\xi_\lambda = \xi_\lambda(\nabla^H,g^M,\theta) = \frac{1}{2}\Big(\, \LDet_{2\theta}\big((\B_{\bar{0},(\lambda, \infty)}^{H,+})^2\big) - \LDet_{2\theta}\big((\B_{\bar{1},(\lambda, \infty)}^{H,+})^2\big) \, \Big).
\end{equation}

Let $P^\pm_{\bar{k},\mathcal{I}}, k=0,1$ be the orthogonal projection onto the closure of the subspace $\Omega^{\bar k}_{\pm,\mathcal{I}}(M,E)$. Set
\begin{equation}\label{E:dno}
d_{\bar{k},\lambda}^{\mp}:= \operatorname{rank}(\operatorname{Id}-P^{\pm}_{\bar{k},[0,\lambda]}) = \dim \Omega^{\bar k}_{\mp,\mathcal{I}}(M,E), \quad k=0,1.
\end{equation}

If $\mathcal{I} \subset \mathbb{R}$ we denote by $L_{\mathcal{I}}$ the solid angle
\[
L_{\mathcal{I}}=\{\rho e^{i \theta}: 0 < \rho < \infty, \theta \in \mathcal{I} \}.
\]

\begin{proposition}
Let $\nabla$ be a flat connection on a vector bundle $E$ over a closed Riemannian manifold $(M,g^M)$ of odd dimension $m=2r-1$ and $H$ is an odd-degree closed differential form, other than one form, on $M$. Assume $\theta \in (-\pi/2,0)$ is an Agmon angle for the twisted odd signature operator $\B_{\bar{0},(\lambda, \infty)}^{H}$ such that there are no eigenvalues of $\B^H$ in the solid angles $L_{(-\pi/2,\theta]}$ and $L_{(\pi/2,\theta + \pi]}$. Then, for every $\lambda \ge 0$, cf. \eqref{E:ldetgr},
\begin{equation}\label{E:ldetetarell}
\LDet_{\gr,\theta}(\B^{H}_{\bar{0},(\lambda,\infty)})=\xi_\lambda - i \pi \eta_\lambda(\nabla^H) -\frac{i \pi}{2}\sum_{k=0,1}(-1)^k d^-_{\bar{k},\lambda}.
\end{equation} 
\end{proposition}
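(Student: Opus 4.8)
The plan is to follow the proof of the corresponding $\mathbb{Z}$-graded statement, Proposition 6.20 of \cite{BK3}, adapted to the $\mathbb{Z}_2$-graded twisted setting. The starting point is the definition \eqref{E:ldetgr}, which expresses $\LDet_{\gr,\theta}(\B^H_{\bar 0,(\lambda,\infty)})$ as $\LDet_\theta(\B^{H,+}_{\bar 0,(\lambda,\infty)})-\LDet_\theta(-\B^{H,+}_{\bar 1,(\lambda,\infty)})$, since $\Gamma\B^{H,-}_{\bar 0,\mathcal I}\Gamma=\B^{H,+}_{\bar 1,\mathcal I}$. So it suffices to analyze each of the two terms $\LDet_\theta(\pm\B^{H,+}_{\bar k,(\lambda,\infty)})$ individually. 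On each space $\Omega^{\bar k}_{+,(\lambda,\infty)}(M,E)$ the operator $\B^{H,+}_{\bar k}$ is bijective with self-adjoint leading symbol, so the general machinery of \cite[Section 6]{BK3} applies verbatim.

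The key identity to invoke is the relation, for a bijective elliptic operator $D$ of even order with self-adjoint leading symbol, between $\LDet_\theta(D)$, $\LDet_{2\theta}(D^2)$ and the $\eta$-function: splitting off the eigenvalues with positive versus negative real part one obtains (cf.\ (6-21)--(6-25) of \cite{BK3})
\begin{equation}\label{E:ldetsplit}
\LDet_\theta(D)=\frac{1}{2}\LDet_{2\theta}(D^2)-\frac{i\pi}{2}\bigl(\eta_\theta(0,D)+\dim\Omega_-\bigr),
\end{equation}
where $\Omega_-$ is the span of generalized eigenvectors with negative real part, and a similar formula holds for $-D$ with the roles of positive and negative reversed. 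Applying \eqref{E:ldetsplit} to $D=\B^{H,+}_{\bar 0,(\lambda,\infty)}$ and to $D=-\B^{H,+}_{\bar 1,(\lambda,\infty)}$, and taking the difference, the $\LDet_{2\theta}(D^2)$ terms assemble into $\xi_\lambda$ as defined in \eqref{E:xino} (using $(-\B^{H,+}_{\bar 1})^2=(\B^{H,+}_{\bar 1})^2$), while the $\eta_\theta(0,\cdot)$ terms must be reassembled. Here one uses that $\B^{H}_{\bar 1,(\lambda,\infty)}=\Gamma\B^H_{\bar 0,(\lambda,\infty)}\Gamma$ together with the decomposition $\Omega^{\bar k}_{(\lambda,\infty)}=\Omega^{\bar k}_{+,(\lambda,\infty)}\oplus\Omega^{\bar k}_{-,(\lambda,\infty)}$ to relate $\eta_\theta(0,\B^{H,+}_{\bar 1})$, $\eta_\theta(0,\B^{H,-}_{\bar 0})$ and $\eta_\theta(0,\B^H_{\bar 0,(\lambda,\infty)})$, ultimately producing the term $-i\pi\eta_\lambda(\nabla^H)$ via Definition~\ref{D:etainv} of the $\eta$-invariant. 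The remaining boundary contributions from the $\dim\Omega_\pm$ terms, combined with the $m_\pm$, $m_0$ corrections in \eqref{E:etainv11}, collapse to $-\frac{i\pi}{2}\sum_{k=0,1}(-1)^k d^-_{\bar k,\lambda}$ after a bookkeeping of dimensions using \eqref{E:dno} and the hypothesis that $\B^H$ has no eigenvalues in the solid angles $L_{(-\pi/2,\theta]}$ and $L_{(\pi/2,\theta+\pi]}$ (so that the Agmon angle $\theta\in(-\pi/2,0)$ genuinely separates the spectrum and the counts $m_\pm$ behave as in the untwisted case).

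The main obstacle I expect is precisely this last piece of sign-and-dimension bookkeeping: one must carefully track how the algebraic multiplicities $m_+(\B^{H,+}_{\bar 0})$, $m_-(\B^{H,+}_{\bar 0})$, $m_0(\B^{H,+}_{\bar 0})$ and their analogues for $-\B^{H,+}_{\bar 1}$ recombine, using the symmetry $\Gamma\B^{H,-}_{\bar 0}\Gamma=\B^{H,+}_{\bar 1}$ which forces the spectrum of $\B^{H,+}_{\bar 1}$ to mirror that of $\B^{H,-}_{\bar 0}$, and then verify that all the stray $i\pi/2$ factors telescope into the single sum $\sum_{k=0,1}(-1)^k d^-_{\bar k,\lambda}$. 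The presence of the flux $H$ does not change any analytic input — $\B^H$ is still an elliptic first-order operator with the same leading symbol as the untwisted odd signature operator, so $\zeta$- and $\eta$-regularity, the Seeley and Gilkey theorems, and formula (6-17) of \cite{BK3} for independence of the Agmon angle all apply without modification — so the argument is essentially a transcription of \cite[Proposition 6.20]{BK3} with $\nabla$ replaced by $\nabla^H$ and the $\mathbb{Z}$-grading replaced by the $\mathbb{Z}_2$-grading, and the only real content is checking that the combinatorial identity underlying the final term survives this replacement.
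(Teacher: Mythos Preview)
Your approach is essentially the same as the paper's: both start from \eqref{E:ldetgr}, apply a $\LDet_\theta(D)\leftrightarrow\LDet_{2\theta}(D^2)$ identity to each summand, and use the conjugation $\Gamma\B^{H,+}_{\bar 1}\Gamma=\B^{H,-}_{\bar 0}$ to recombine the $\eta$-pieces into $\eta_\lambda(\nabla^H)=\eta(\B^{H,+}_{\bar 0,(\lambda,\infty)})+\eta(\B^{H,-}_{\bar 0,(\lambda,\infty)})$.

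One point worth noting: the formula you wrote as \eqref{E:ldetsplit} cannot be correct as stated, since $\dim\Omega_-$ is infinite for $D=\B^{H,+}_{\bar k,(\lambda,\infty)}$. The paper sidesteps the whole ``bookkeeping of $m_\pm$, $m_0$ corrections'' you anticipate by quoting instead the packaged identity \cite[(4.34)]{BK1},
\[
\LDet_\theta(\pm\B^{H,+}_{\bar k,(\lambda,\infty)})=\tfrac12\LDet_{2\theta}\big((\B^{H,+}_{\bar k,(\lambda,\infty)})^2\big)-i\pi\Big(\eta(\pm\B^{H,+}_{\bar k,(\lambda,\infty)})-\tfrac12\zeta_{2\theta}\big(0,(\B^{H,+}_{\bar k,(\lambda,\infty)})^2\big)\Big),
\]
which already contains the \emph{full} $\eta$-invariant of Definition~\ref{D:etainv} rather than the raw $\eta_\theta(0,\cdot)$, and the $\zeta$-regularized quantity $\zeta_{2\theta}(0,\cdot)$ in place of any dimension. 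The residual term then drops out in one line from \cite[(6-6)]{BK3}:
\[
\zeta_{2\theta}\big(0,(\B^{H,+}_{\bar 0,(\lambda,\infty)})^2\big)-\zeta_{2\theta}\big(0,(\B^{H,+}_{\bar 1,(\lambda,\infty)})^2\big)=-\sum_{k=0,1}(-1)^k d^-_{\bar k,\lambda},
\]
with no further sign chasing. So the obstacle you flagged largely evaporates if you invoke the right form of the identity.
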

\begin{proof}
From Definition \ref{D:etainv} of the $\eta$-invariant it follows that 
\begin{equation}\label{E:eteq1}
\eta(-\B_{\bar{1},(\lambda, \infty)}^{H,+}) = - \eta(\B_{\bar{1},(\lambda, \infty)}^{H,+}).
\end{equation}
By the fact that $\Gamma \B_{\bar{1},(\lambda, \infty)}^{H,+} \Gamma = \B_{\bar{0},(\lambda, \infty)}^{H,-}$,
we have
\begin{equation}\label{E:eteq2}
\eta(\B_{\bar{1},(\lambda, \infty)}^{H,+}) = \eta(\B_{\bar{0},(\lambda, \infty)}^{H,-}).
\end{equation}
Combining \eqref{E:etano}, \eqref{E:eteq1} with \eqref{E:eteq2}, we have
\begin{equation}\label{E:etarelat}
\eta(\B_{\bar{0},(\lambda, \infty)}^{H,+})-\eta(-\B_{\bar{1},(\lambda, \infty)}^{H,+})=\eta(\B_{\bar{0},(\lambda, \infty)}^{H,+})+\eta(\B_{\bar{0},(\lambda, \infty)}^{H,-})=\eta_\lambda(\nabla^H).
\end{equation}
By \cite[(4.34)]{BK1}, for $k=0,1$, we have
\begin{equation}\label{E:ldetpmrel}
\LDet_\theta(\pm \B^{H,+}_{\bar{k},\mathcal{I}}) \, = \, \frac{1}{2}\LDet_{2\theta}\big((\B_{\bar{k},(\lambda, \infty)}^{H,+})^2\big)- i \pi \Big(\, \eta(\pm \B^{H,+}_{\bar{k},\mathcal{I}})-\frac{\zeta_{2\theta}(0, \big(\B_{\bar{k},(\lambda, \infty)}^{H,+}\big)^2)}{2} \,\Big)
\end{equation}
and, by \cite[(6-6)]{BK3} and \eqref{E:dno}, we have
\begin{equation}\label{E:zettw}
\zeta_{2\theta}(0, \big(\B_{\bar{0},(\lambda, \infty)}^{H,+}\big)^2)-\zeta_{2\theta}(0, \big(\B_{\bar{1},(\lambda, \infty)}^{H,+}\big)^2)= -\sum_{k=0,1}(-1)^k d^-_{\bar{k},\lambda}.
\end{equation}
Combining \eqref{E:xino}, \eqref{E:etarelat}, \eqref{E:ldetpmrel} with \eqref{E:zettw}, we obtain the result.
\end{proof}

\section{Metric anomaly and the definition of the refined analytic torsion twisted by a flux form}\label{S:madrat}

In this section we study the metric dependence of the element $\rho_H=\rho(\nabla^H,g^M)$ defined in \eqref{E:rhoh}. We then use this element to construct the {\em twisted refined analytic torsion}, which is a canonical element of the determinant line $\Det(H^\bullet(M,E,H))$. We also show that the twisted refined analytic torsion is independent of the metric $g^M$ and the representative $H$ in the cohomology class $[H]$.
  
\subsection{Relationship between $\rho_H(t)$ and the $\eta$-invariant}
Suppose that $g^M_t, t \in \mathbb{R}$, is a smooth family of Riemannian metrics on $M$. Let
\[
\rho_H(t)=\rho(\nabla^H,g^M_t) \in \Det\big( H^\bullet(M,E,H) \big)
\]
be the canonical element defined in \eqref{E:rhoh}. 

Let $\Gamma_t$ denote the chirality operator corresponding to the metric $g^M_t$, cf. \eqref{E:chirality}, and let $\B^H(t)=\B(\nabla^H,g_t^M)$ denote the twisted odd signature operator corresponding to the Riemannian metric $g^M_t$ and let $\B^{H,+}(t)$ denote the restriction of $\B^H(t)=\B(\nabla^H,g_t^M)$ to $\Omega^{\bullet}_+(M,E)$.

Fix $t_0 \in \mathbb{R}$ and choose $\lambda \ge 0$ such that there are no eigenvalues of $(\B^{H}(t_0))^2$ of absolute value $\lambda$. Further, assume that $\lambda$ is big enough so that the real parts of eigenvalues of $(\B^{H}_{(\lambda,\infty)}(t_0))^2$ are all greater than $0$. Then there exists $\delta > 0$ small enough such that the same holds for the spectrum of $(\B^{H}(t))^2$ for $t \in (t_0-\delta,t_0+\delta)$. In particular, $d^-_{\bar{k},\lambda}$, cf. \eqref{E:dno}, is independent of $t \in (t_0-\delta,t_0+\delta)$. Set
\[
\eta_\lambda(\nabla^H,t):=\eta(\B_{\bar{0},(\lambda, \infty)}^{H}(t)), \quad
\xi_\lambda(t,\theta) = \xi_\lambda(\nabla^H,g_t^M,\theta).
\]
By definition \eqref{E:rhoh},
\[
\rho_H(t)= \Det_{\gr,\theta}(\B^{H}_{\bar{0},(\lambda,\infty)}(t)) \cdot \rho_{\Gamma_{t,[0,\lambda]}}.
\]
Assume that $\theta_0 \in (\pi/2,0)$ is an Agmon angle for $\B^{H}(t_0)$ such that there are no eigenvalues of $\B^{H}(t_0)$ in $L_{(-\pi/2,\theta_0]}$ and $L_{(-\pi/2,\theta_0 + \pi)}$. Choose $\delta > 0$ so that for every $t \in (t_0-\delta,t_0+\delta)$ both $\theta_0$ and $\theta_0+\pi$ are Agmon angles of $\B^{H}(t_0)$. For $t \not= t_0$, it might happen that there are eigenvalues of $\B_{\bar{k},(\lambda, \infty)}^{H}(t)$ in $L_{(-\pi/2,\theta_0]}$ and $L_{(-\pi/2,\theta_0 + \pi)}$. Hence, \eqref{E:ldetetarell} is not necessarily true. However, from the independence of the Agmon angle of the $\zeta$-function, cf. \cite[(6-16)]{BK3}, and \eqref{E:xino}, we conclude that for every angle $\theta \in (-\pi/2,0)$, so that $\theta$ and $\theta + \pi$ are Agmon angles for $\B_{\bar{k},(\lambda, \infty)}^{H}(t)$,
\[
\xi_{\lambda}(t,\theta) \equiv \xi_{\lambda}(t,\theta_0) \quad \text{mod} \, \pi i. 
\] 
Hence, from \eqref{E:ldetetarell}, we obtain
\begin{equation}\label{E:pmfor}
\rho_H(t)= \pm e^{\xi_{\lambda}(t,\theta_0)} \cdot e^{-i \pi \eta_\lambda(\nabla^H,t)} \cdot e^{\frac{i \pi}{2}\sum_{k=0,1}(-1)^k d^-_{\bar{k},\lambda}} \cdot  \rho_{\Gamma_{t,[0,\lambda]}}.
\end{equation}

\begin{lemma}\label{L:meind}
Under the above assumptions, the product
\[
e^{\xi_{\lambda}(t,\theta_0)} \cdot \rho_{\Gamma_{t,[0,\lambda]}} \in \Det\big( H^{\bullet}\big(M,E,H\big) \big)
\]
is independent of $t \in (t_0-\delta, t_0+\delta)$.
\end{lemma}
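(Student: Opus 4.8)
The plan is to differentiate the product $e^{\xi_\lambda(t,\theta_0)}\cdot\rho_{\Gamma_{t,[0,\lambda]}}$ with respect to $t$ and show the derivative vanishes. The two factors live, respectively, in $\mathbb{C}$ and in the fixed line $\Det\big(H^\bullet(M,E,H)\big)$ (after the canonical identification with $\Det\big(H^\bullet_{[0,\lambda]}(M,E,H)\big)$, which is $t$-independent for $t$ in the small interval since $d^-_{\bar k,\lambda}$ and hence the spaces $\Omega^\bullet_{[0,\lambda]}(M,E)$ vary smoothly but their cohomology stays canonically identified with $H^\bullet(M,E,H)$). So by the Leibniz rule it suffices to prove
\begin{equation}\label{E:plan1}
\frac{d}{dt}\,\xi_\lambda(t,\theta_0)\;=\;-\,\frac{1}{2}\operatorname{Tr}_s\big(\dot\Gamma_t\circ\Gamma_t\big|_{\Omega^\bullet_{[0,\lambda]}(M,E)}\big),
\end{equation}
because then the variation formula for the finite-dimensional refined torsion, namely $\frac{d}{dt}\rho_{\Gamma_{t,[0,\lambda]}}=\frac12\operatorname{Tr}_s(\dot\Gamma_t\circ\Gamma_t|_{[0,\lambda]})\cdot\rho_{\Gamma_{t,[0,\lambda]}}$ from Proposition \eqref{E:varfor}, cancels the contribution of $e^{\xi_\lambda(t,\theta_0)}$ exactly.

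First I would reduce \eqref{E:plan1} to a statement about the full supertrace. Since $\dot\Gamma_t\circ\Gamma_t$ commutes with $(\B^H(t))^2$ (differentiate $\Gamma_t^2=\mathrm{Id}$ and use that $\Gamma_t$ intertwines the relevant operators), it preserves each spectral subspace, so
\[
\operatorname{Tr}_s\big(\dot\Gamma_t\circ\Gamma_t\big)\;=\;\operatorname{Tr}_s\big(\dot\Gamma_t\circ\Gamma_t\big|_{\Omega^\bullet_{[0,\lambda]}}\big)\;+\;\operatorname{Tr}_s\big(\dot\Gamma_t\circ\Gamma_t\big|_{\Omega^\bullet_{(\lambda,\infty)}}\big),
\]
where the left-hand side is interpreted as a regularized (e.g. heat-kernel) supertrace. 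The hoped-for identity \eqref{E:plan1} will follow from two ingredients: (i) the derivative of $\xi_\lambda$, which by definition \eqref{E:xino} is $\tfrac12\big(\LDet_{2\theta}((\B^{H,+}_{\bar0,(\lambda,\infty)})^2)-\LDet_{2\theta}((\B^{H,+}_{\bar1,(\lambda,\infty)})^2)\big)$, can be computed by the standard formula for the variation of a $\zeta$-regularized determinant, giving a local (Wodzicki-residue / finite-part) expression in terms of $\operatorname{Tr}_s$ of $\dot\Gamma_t\Gamma_t$ against the heat kernel on the high-energy part; and (ii) on a closed odd-dimensional manifold the full regularized supertrace $\operatorname{Tr}_s(\dot\Gamma_t\circ\Gamma_t\, e^{-u(\B^H)^2})$ has vanishing constant term as $u\to0$ — this is the usual ``odd-dimensional, zero-index'' vanishing of the relevant local density, exactly as in Section 9 of \cite{BK3}. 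Granting (ii), the $(\lambda,\infty)$-part contributes the negative of the $[0,\lambda]$-part, which is finite-dimensional, and \eqref{E:plan1} drops out.

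The main obstacle is step (i)–(ii): carefully justifying that the metric variation of the high-energy graded $\zeta$-determinant equals $-\tfrac12\operatorname{Tr}_s(\dot\Gamma_t\Gamma_t|_{[0,\lambda]})$ up to the vanishing local term. Concretely one writes $\frac{d}{dt}\LDet_{2\theta}((\B^{H,+}_{\bar k,(\lambda,\infty)})^2)=-\operatorname{Tr}\big(\Pi_{(\lambda,\infty)}\,\dot{(\B^{H,+}_{\bar k})^2}\,((\B^{H,+}_{\bar k})^2)^{-1}\big)$ plus a finite-part correction coming from $\frac{d}{dt}\zeta(0,\cdot)$ — this is precisely where one must track that $\dot{(\B^H)^2}=\dot\Gamma_t\nabla^H\Gamma_t\nabla^H+\cdots$ and reorganize the traces using $\Gamma_t\B^H\Gamma_t=\B^H$-type identities so that, after taking the supertrace over $\bar0$ minus $\bar1$, everything collapses to $\tfrac12\operatorname{Tr}_s(\dot\Gamma_t\Gamma_t)$. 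This is the twisted analogue of the computation in \cite[Sections 9 and 11]{BK3}; since $H$ is a zeroth-order perturbation of $\nabla$, the symbol of $\B^H$ agrees with that of the untwisted odd signature operator, so all the local heat-coefficient arguments (in particular the vanishing in odd dimensions) go through verbatim, and the nonlocal pieces are handled by the finite-dimensional Proposition \eqref{E:varfor}. I would therefore structure the proof as: differentiate the product; invoke \eqref{E:varfor} for the cohomological factor; compute $\frac{d}{dt}\xi_\lambda$ via the determinant-variation formula; split into $[0,\lambda]$ and $(\lambda,\infty)$; quote the odd-dimensional vanishing of the local supertrace density to kill the high-energy local term; and conclude cancellation.
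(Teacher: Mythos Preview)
Your overall plan is the same as the paper's: differentiate the product, use the finite-dimensional variation formula \eqref{E:varfor} for $\rho_{\Gamma_{t,[0,\lambda]}}$, compute $\frac{d}{dt}\xi_\lambda(t,\theta_0)$ via the heat-kernel/zeta representation, and invoke the vanishing of the constant term in the small-time expansion of $\operatorname{Tr}\big(\dot\Gamma_t\Gamma_t\,e^{-u(\B^H)^2}\big)$ in odd dimensions to reduce everything to the finite-dimensional $[0,\lambda]$ trace, yielding \eqref{E:plan1} and hence the cancellation.

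There is, however, one genuine slip you should fix. You assert that $\dot\Gamma_t\Gamma_t$ commutes with $(\B^H(t))^2$, justified by ``differentiate $\Gamma_t^2=\mathrm{Id}$.'' Differentiating $\Gamma_t^2=\mathrm{Id}$ only gives $\dot\Gamma_t\Gamma_t=-\Gamma_t\dot\Gamma_t$; it says nothing about commutation with $(\B^H)^2$, and in fact $\dot\Gamma_t\Gamma_t$ need not preserve the spectral subspaces $\Omega^\bullet_{(\lambda,\infty)}(M,E)$. The paper does not need this: it differentiates $f(s,t)$ directly, where $f$ is built from the heat kernel of $(\Gamma_t\nabla^H)^2$ restricted to $\Omega^{\bar k}_{+,(\lambda,\infty)}$, and then by explicit trace manipulations (using $\dot\Gamma_t\Gamma_t=-\Gamma_t\dot\Gamma_t$, the semigroup property, and the relation $\Gamma_t(\Gamma_t\nabla^H)^2\Gamma_t=(\nabla^H\Gamma_t)^2$) arrives at the expression $-s\sum_k(-1)^k\int_0^\infty u^{s-1}\operatorname{Tr}\big[\dot\Gamma_t\Gamma_t\,e^{-u(\B^H_{\bar k,(\lambda,\infty)})^2}\big]du$, after which the split into $[0,\lambda]$ and full-operator pieces and the odd-dimensional vanishing proceed as you describe. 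So drop the commutation claim and instead track the $t$-derivative through the heat kernel explicitly; the rest of your outline is on target.
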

\begin{proof}
Let $\Gamma_t$ denote the chirality operator corresponding to the metric $g_t^M$. Following the $\mathbb{Z}$-graded case, cf. \cite{RS}, we set  
\begin{align}\label{E:plva3}
f(s,t)& =  \sum_{k=0,1}(-1)^k \int_0^\infty u^{s-1} \Tr \Big[\exp \big(-u(\Gamma_t \nabla^H)^2 \big|_{\Omega^{\bar k}_{+,(\lambda,\infty)}(M,E)}\big) \Big]du \nonumber \\
& = \Gamma(s) \sum_{k=0,1}(-1)^k  \zeta \Big(s, (\Gamma_t \nabla^H)^2 \big|_{\Omega^{\bar k}_{+,(\lambda,\infty)}(M,E)} \Big)
\end{align}
Using the fact that 
$$
\Gamma_t (\Gamma_t \nabla^H)^2 \big|_{\Omega^{\bar k}_{+,(\lambda,\infty)}(M,E)} \Gamma_t = (\nabla^H \Gamma_t )^2 \big|_{\Omega^{\overline{k+1}}_{-,(\lambda,\infty)}(M,E)},
$$ we also have
\begin{align}\label{E:plva4}
f(s,t)& =  -\sum_{k=0,1}(-1)^k \int_0^\infty u^{s-1} \Tr \Big[\exp \big(-u(\nabla^H\Gamma_t )^2 \big|_{\Omega^{\bar k}_{-,(\lambda,\infty)}(M,E)}\big) \Big]du \nonumber \\
& =  -\Gamma(s) \sum_{k=0,1}(-1)^k  \zeta \Big(s, (\nabla^H\Gamma_t )^2 \big|_{\Omega^{\bar k}_{-,(\lambda,\infty)}(M,E)} \Big)
\end{align}

We denote by $\dot{\Gamma}_t$ with respect to the parameter $t$. Then 
\begin{align}\label{E:plva1}
\frac{d}{dt} (\Gamma_t \nabla^H)^2 \big|_{\Omega^{\bar k}_{+,(\lambda,\infty)}(M,E)} & =  \dot{\Gamma}_t \Gamma_t (\Gamma_t \nabla^H)^2 \big|_{\Omega^{\bar k}_{+,(\lambda,\infty)}(M,E)} \nonumber \\
 & + (\Gamma_t \nabla^H) \big|_{\Omega^{\bar k}_{+,(\lambda,\infty)}(M,E)} \dot{\Gamma}_t \Gamma_t (\Gamma_t \nabla^H) \big|_{\Omega^{\bar k}_{+,(\lambda,\infty)}(M,E)}
\end{align}
where we used that $\Gamma^2_t=1$. 
Similarly, we have
\begin{equation}\label{E:plva2}
\frac{d}{dt} ( \nabla^H \Gamma_t)^2 \big|_{\Omega^{\bar k}_{-,(\lambda,\infty)}(M,E)} =   ( \nabla^H \Gamma_t)^2\Gamma_t \dot{\Gamma}_t \big|_{\Omega^{\bar k}_{-,(\lambda,\infty)}(M,E)}+ ( \nabla^H \Gamma_t)\Gamma_t \dot{\Gamma}_t( \nabla^H \Gamma_t)\big|_{\Omega^{\bar k}_{-,(\lambda,\infty)}(M,E)}.
\end{equation}
If $A$ is of trace class and $B$ is a bounded operator, it is well known that $\operatorname{Tr}(AB)=\operatorname{Tr}(BA)$. Hence, by this fact and the semi-group property of the heat operator, we have
\begin{align}\label{E:trequality1}
  & \Tr \Big[ (\Gamma_t \nabla^H) \big|_{\Omega^{\bar k}_{+,(\lambda,\infty)}(M,E)} \dot{\Gamma}_t \Gamma_t (\Gamma_t \nabla^H) \big|_{\Omega^{\bar k}_{+,(\lambda,\infty)}(M,E)} \exp \big(-u(\Gamma_t \nabla^H)^2 \big|_{\Omega^{\bar k}_{+,(\lambda,\infty)}(M,E)}\big) \Big] \nonumber \\
 = & \Tr \Big[ \exp \big(-\frac{u}{2}(\Gamma_t \nabla^H)^2 \big|_{\Omega^{\bar k}_{+,(\lambda,\infty)}(M,E)}\big) (\Gamma_t \nabla^H) \big|_{\Omega^{\bar k}_{+,(\lambda,\infty)}(M,E)} \nonumber \\
 & \cdot   \dot{\Gamma}_t \Gamma_t (\Gamma_t \nabla^H) \big|_{\Omega^{\bar k}_{+,(\lambda,\infty)}(M,E)} \exp \big(-\frac{u}{2}(\Gamma_t \nabla^H)^2 \big|_{\Omega^{\bar k}_{+,(\lambda,\infty)}(M,E)}\big) \Big]\nonumber \\
= & \Tr \Big[\dot{\Gamma}_t \Gamma_t (\Gamma_t \nabla^H) \big|_{\Omega^{\bar k}_{+,(\lambda,\infty)}(M,E)} \exp \big(-\frac{u}{2}(\Gamma_t \nabla^H)^2 \big|_{\Omega^{\bar k}_{+,(\lambda,\infty)}(M,E)}\big) \nonumber \\
 & \cdot \exp \big(-\frac{u}{2}(\Gamma_t \nabla^H)^2 \big|_{\Omega^{\bar k}_{+,(\lambda,\infty)}(M,E)}\big) (\Gamma_t \nabla^H) \big|_{\Omega^{\bar k}_{+,(\lambda,\infty)}(M,E)}   \Big] \nonumber \\
 = & \Tr \Big[ \dot{\Gamma}_t \Gamma_t (\Gamma_t \nabla^H) \big|_{\Omega^{\bar k}_{+,(\lambda,\infty)}(M,E)} \exp \big(-u(\Gamma_t \nabla^H)^2 \big|_{\Omega^{\bar k}_{+,(\lambda,\infty)}(M,E)}\big) (\Gamma_t \nabla^H) \big|_{\Omega^{\bar k}_{+,(\lambda,\infty)}(M,E)}   \Big] \nonumber \\
 = & \Tr \Big[ \dot{\Gamma}_t \Gamma_t (\Gamma_t \nabla^H)^2 \big|_{\Omega^{\bar k}_{+,(\lambda,\infty)}(M,E)}\exp \big(-u(\Gamma_t \nabla^H)^2 \big|_{\Omega^{\bar k}_{+,(\lambda,\infty)}(M,E)}\big) \Big],
\end{align}
here in the last equality we used the fact that
\begin{align}
& (\Gamma_t \nabla^H) \big|_{\Omega^{\bar k}_{+,(\lambda,\infty)}(M,E)} \exp \big(-u(\Gamma_t \nabla^H)^2 \big|_{\Omega^{\bar k}_{+,(\lambda,\infty)}(M,E)}\big) (\Gamma_t \nabla^H) \big|_{\Omega^{\bar k}_{+,(\lambda,\infty)}(M,E)} \nonumber \\
= &(\Gamma_t \nabla^H)^2 \big|_{\Omega^{\bar k}_{+,(\lambda,\infty)}(M,E)}\exp \big(-u(\Gamma_t \nabla^H)^2 \big|_{\Omega^{\bar k}_{+,(\lambda,\infty)}(M,E)}\big). \nonumber
\end{align}
By \eqref{E:plva1}, \eqref{E:plva3} and \eqref{E:trequality1}, we have
\begin{equation}\label{E:plva5}
\frac{d}{dt}f(s,t) = \sum_{k=0,1}(-1)^k \int_0^\infty u^{s-1} \Tr \Big[- 2u \dot{\Gamma}_t \Gamma_t (\Gamma_t \nabla^H)^2 \big|_{\Omega^{\bar k}_{+,(\lambda,\infty)}(M,E)}\exp \big(-u(\Gamma_t \nabla^H)^2 \big|_{\Omega^{\bar k}_{+,(\lambda,\infty)}(M,E)}\big) \Big]du.
\end{equation}
Simlarly, we have
\begin{equation}\label{E:plva6}
\frac{d}{dt}f(s,t) = -\sum_{k=0,1}(-1)^k \int_0^\infty u^{s-1} \Tr \Big[- 2u \Gamma_t \dot{\Gamma}_t  (\nabla^H \Gamma_t )^2 \big|_{\Omega^{\bar k}_{-,(\lambda,\infty)}(M,E)}\exp \big(-u(\nabla^H \Gamma_t )^2 \big|_{\Omega^{\bar k}_{-,(\lambda,\infty)}(M,E)}\big) \Big]du.
\end{equation}By \eqref{E:plva5}, \eqref{E:plva6} and the fact that $\dot{\Gamma}_t \Gamma_t = - \Gamma_t \dot{\Gamma}_t$, we conclude that
\begin{align}\label{E:plva7}
\frac{d}{dt}f(s,t) & =  - \sum_{k=0,1}(-1)^k \int_0^\infty u^{s} \Tr \Big[ \dot{\Gamma}_t \Gamma_t (\B_{\bar{k},(\lambda,\infty)}^{H}(t))^2 \exp \big(-u(\B_{\bar{k},(\lambda,\infty)}^{H}(t))^2 \big)  \Big]du \nonumber \\
& =  \sum_{k=0,1}(-1)^k \int_0^\infty u^{s} \frac{d}{du} \Tr \Big[ \dot{\Gamma}_t \Gamma_t \exp \big(-u(\B_{\bar{k},(\lambda,\infty)}^{H}(t))^2 \big)  \Big]du \nonumber  \\
& = - s\sum_{k=0,1}(-1)^k \int_0^\infty u^{s-1}  \Tr \Big[ \dot{\Gamma}_t \Gamma_t \exp \big(-u(\B_{\bar{k},(\lambda,\infty)}^{H}(t))^2 \big)  \Big]du,
\end{align}
where we used the integration by parts for the last equality.
Since $(\B^H(t))^2$ is an elliptic differential operator, the dimension of $\Omega^\bullet_{[0,\lambda]}(M,E)$ is finite. Let $\varepsilon \not=0$ be a small enough real number so that $(\B^H(t))^2 + \varepsilon$ is bijective and $2\theta_0$ is an Agmon angle for $(\B^H(t))^2 + \varepsilon$. Then we can rewrite \eqref{E:plva7} as
\begin{align}\label{E:plva8}
\frac{d}{dt}f(s,t) & =  - s\sum_{k=0,1}(-1)^k \int_0^1 u^{s-1}  \Tr \Big[ \dot{\Gamma}_t \Gamma_t \big(\exp \big(-u(\B_{\bar{k}}^{H}(t))^2+\varepsilon  \big) \Big]du \nonumber \\ 
&   - s\sum_{k=0,1}(-1)^k \int_1^\infty u^{s-1}  \Tr \Big[ \dot{\Gamma}_t \Gamma_t \big(\exp \big(-u(\B_{\bar k}^{H}(t))^2+\varepsilon  \big) \Big]du \nonumber \\ 
&   + s\sum_{k=0,1}(-1)^k \int_0^1 u^{s-1}  \Tr \Big[ \dot{\Gamma}_t \Gamma_t \big(\exp \big(- u (\B_{\bar{k},[0,\lambda]}^{H}(t))^2 \big) \Big]du \nonumber \\
&   + s \sum_{k=0,1}(-1)^k \int_1^\infty u^{s-1}  \Tr \Big[ \dot{\Gamma}_t \Gamma_t \big(\exp \big(- u (\B_{\bar{k},[0,\lambda]}^{H}(t))^2 \big) \Big]du.
\end{align}
Since $\dot{\Gamma}_t \Gamma_t$ is a local quantitity and the dimension of the manifold $M$ is odd, the asymptotic expansion as $u \downarrow 0$ for $\Tr \Big[ \dot{\Gamma}_t \Gamma_t \big(\exp \big(-u(\B^{H}(t))^2+\varepsilon  \big) \Big]$ does not contain a constant term. Therefore the integrals of the first term on the right hand side of \eqref{E:plva8} do not have poles at $s=0$. On the other hand, because of exponential decay of $\Tr \Big[ \dot{\Gamma}_t \Gamma_t \big(\exp \big(-u(\B^{H}(t))^2+\varepsilon  \big) \Big]$ and $\Tr \Big[ \dot{\Gamma}_t \Gamma_t \big(\exp \big(- u (\B_{[0,\lambda]}^{H}(t))^2 \big) \Big]$ for large $u$, the integrals of the second term and the fourth term on the right hand side of \eqref{E:plva8} are entire functions in $s$. Hence we have
\begin{align}\label{E:plva9}
\frac{d}{dt}\Big|_{s=0}f(s,t) & =  \Big(s \sum_{k=0,1}(-1)^k \int_0^1 u^{s-1} \Tr \Big[ \dot{\Gamma}_t \Gamma_t \Big|_{\Omega^{\bar k}_{[0,\lambda]}(M,E)} \Big] du \Big) \Big|_{s=0} \nonumber \\
& =  \sum_{k=0,1}(-1)^k \Tr \Big[ \dot{\Gamma}_t \Gamma_t \Big|_{\Omega^{\bar k}_{[0,\lambda]}(M,E)} \Big]
\end{align}
and, by \eqref{E:plva3},
\begin{equation}\label{E:plva10}
\frac{d}{dt}\Big|_{s=0}sf(s,t) = \frac{d}{dt}\Big|_{s=0} \sum_{k=0,1}(-1)^k \zeta \Big(s, (\Gamma_t \nabla^H)^2 \big|_{\Omega^{\bar k}_{+,(\lambda,\infty)}(M,E)} \Big) =0.
\end{equation}
By \eqref{E:xino} and \eqref{E:plva3}, we know that 
\begin{equation}\label{E:plva11}
\xi_{\lambda}(t,\theta_0)= -\frac{1}{2}\operatorname{lim}_{s \to 0}\Big[ f(s,t)-\frac{1}{s} \sum_{k=0,1}(-1) \zeta(0, (\Gamma_t \nabla^H)^2 \big|_{\Omega^{\bar k}_{+,(\lambda,\infty)}(M,E)})  \Big].
\end{equation}
Combining \eqref{E:plva9}, \eqref{E:plva10} with \eqref{E:plva11}, we obtain
\begin{equation}\label{E:xivarfor}
\frac{d}{dt}\Big|_{t=0}\xi_{\lambda}(t,\theta_0)= -\frac{1}{2}\sum_{k=0,1}(-1)^k \Tr \Big[ \dot{\Gamma}_t \Gamma_t \Big|_{\Omega^{\bar k}_{[0,\lambda]}(M,E)} \Big]
\end{equation}
Combining \eqref{E:varfor} with \eqref{E:xivarfor}, we obtain
\[
\frac{d}{dt}(e^{\xi_{\lambda}(t,\theta_0)} \cdot \rho_{\Gamma_{t,[0,\lambda]}})=0.
\]
\end{proof}

We need the following lemma, which is a slight modification of the result in Subsection 9.3 of \cite{BK3}.
\begin{lemma}\label{L:etaf}
For any $t_1,t_2 \in (t_0-\delta, t_0 + \delta)$, we have
\[
\eta_\lambda(\nabla^H,t_1)-\eta_\lambda(\nabla^H,t_2) \equiv \eta(\B^{H}_{\bar{0}}(t_1)) - \eta(\B^{H}_{\bar{0}}(t_2)), \quad \operatorname{mod} \ \mathbb{Z}. 
\]
\end{lemma}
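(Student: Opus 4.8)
The plan is to follow the strategy of Subsection 9.3 of \cite{BK3}, adapted to the twisted de Rham complex. The point is that the difference $\eta_\lambda(\nabla^H,t) = \eta(\B^H_{\bar 0,(\lambda,\infty)}(t))$ and the full $\eta$-invariant $\eta(\B^H_{\bar 0}(t))$ differ only by the contribution of the eigenvalues of $\B^H_{\bar 0}(t)$ whose absolute value lies in $[0,\lambda]$, i.e. by the $\eta$-invariant of the finite-dimensional operator $\B^H_{\bar 0,[0,\lambda]}(t)$. Concretely, writing $\B^H_{\bar 0}(t) = \B^H_{\bar 0,[0,\lambda]}(t) \oplus \B^H_{\bar 0,(\lambda,\infty)}(t)$ with respect to the $\B^H(t)^2$-invariant decomposition \eqref{E:omespl}, the additivity of the $\eta$-function under direct sums gives
\[
\eta(\B^H_{\bar 0}(t)) \equiv \eta(\B^H_{\bar 0,[0,\lambda]}(t)) + \eta_\lambda(\nabla^H,t) \quad \operatorname{mod} \ \mathbb{Z},
\]
where the mod $\mathbb{Z}$ ambiguity absorbs the half-integer jumps coming from $m_\pm$ and $m_0$ in Definition \ref{D:etainv}; here $\eta(\B^H_{\bar 0,[0,\lambda]}(t))$ is understood as one half the difference of the numbers of eigenvalues with positive and negative real part, plus the usual correction from purely imaginary and zero eigenvalues.

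Then I would show that $\eta(\B^H_{\bar 0,[0,\lambda]}(t))$ is independent of $t$ modulo $\mathbb{Z}$ for $t$ in the interval $(t_0-\delta,t_0+\delta)$. Recall that $\delta$ was chosen (in the discussion preceding Lemma \ref{L:meind}) so that $\lambda$ is not an eigenvalue of $\B^H(t)^2$ and so that all eigenvalues of $\B^H_{(\lambda,\infty)}(t)^2$ have positive real part for all $t$ in this interval; hence the finite-dimensional space $\Omega^\bullet_{[0,\lambda]}(M,E)$ has constant dimension and $\B^H_{\bar 0,[0,\lambda]}(t)$ is a continuous family of operators on a fixed finite-dimensional space. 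The quantity $\eta$ of a finite-dimensional operator can change only when an eigenvalue crosses the imaginary axis, and each such crossing changes $m_+ - m_-$ (and possibly $m_0$) by an integer, hence changes $\eta$ by an element of $\mathbb{Z}$. Therefore $t \mapsto \eta(\B^H_{\bar 0,[0,\lambda]}(t))$ is constant mod $\mathbb{Z}$, and subtracting the displayed identity at $t_1$ from that at $t_2$ yields the claim.

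Putting these together, for $t_1,t_2 \in (t_0-\delta,t_0+\delta)$,
\[
\eta_\lambda(\nabla^H,t_1) - \eta_\lambda(\nabla^H,t_2) \equiv \eta(\B^H_{\bar 0}(t_1)) - \eta(\B^H_{\bar 0}(t_2)) \quad \operatorname{mod} \ \mathbb{Z}.
\]
The main obstacle, and the only place requiring genuine care, is justifying the additivity $\eta(\B^H_{\bar 0}(t)) \equiv \eta(\B^H_{\bar 0,[0,\lambda]}(t)) + \eta_\lambda(\nabla^H,t)$ at the level of $\eta$-invariants rather than $\eta$-functions: one must check that the finite-rank correction terms $m_\pm$, $m_0$ split compatibly under the decomposition \eqref{E:omespl} and that the $\zeta$-function values at $s=0$ contribute only integers to the difference. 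This is precisely the content of the argument in Subsection 9.3 of \cite{BK3}, and since the twisted odd signature operator $\B^H$ has the same leading symbol as the untwisted one (the flux $H$ enters only through lower-order terms), that argument carries over essentially verbatim; the only modification is bookkeeping the $\mathbb{Z}_2$-grading in place of the $\mathbb{Z}$-grading.
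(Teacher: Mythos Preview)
Your proposal is correct and follows the same approach as the paper, which gives no proof but simply remarks that the lemma is ``a slight modification of the result in Subsection 9.3 of \cite{BK3}.'' One small point of imprecision: you write that an eigenvalue crossing the imaginary axis changes $m_+-m_-$ by an integer and ``hence changes $\eta$ by an element of $\mathbb{Z}$,'' but by Definition~\ref{D:etainv} an integer change in $m_+-m_-$ alone would shift $\eta$ by a half-integer; the correct reason is the parity identity $2\eta(\B^H_{\bar 0,[0,\lambda]}(t))\equiv \dim\Omega^{\bar 0}_{[0,\lambda]}(M,E)\pmod{2}$ (cf.\ \eqref{E:ddequ4}), which forces all jumps of $\eta(\B^H_{\bar 0,[0,\lambda]}(t))$ to be integers since the dimension is constant on $(t_0-\delta,t_0+\delta)$.
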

Let $\B^H_{\trivial}=\B_{\bar 0}(\nabla^H_{\trivial},g^M):\Omega^{\bar 0}(M,E) \to \Omega^{\bar 0}(M,E)$ denote the even part of twisted odd signature operator corresponding to the metric $g^M$ and the trivial line bundle over $M$ endowed with the trivial connection $\nabla_{\trivial}$. Put
\[
\eta_{\trivial}:=\frac{1}{2} \eta(0,\B_{\trivial}^{H}).
\]

We now need to study the dependence of $\eta(\B^{H}_{\bar{0}})$ on the Riemannian metric $g^M$. This was essentially done in \cite{APS2} and \cite{Gi}. 
\begin{lemma}\label{L:rhopm}
The function $\eta(\B^{H}_{\bar{0}}(t))-\operatorname{rank}(E)\eta_{\trivial}(t)$ is, modulo $\mathbb{Z}$, independent of $t \in (t_0-\delta, t_0+\delta)$.
\end{lemma}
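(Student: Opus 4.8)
The plan is to adapt the argument of \cite[Section 9]{BK3} (which treats the untwisted odd signature operator) to the twisted operator $\B^H_{\bar 0}$, the only new features being that $H\wedge\cdot$ has order zero and does not involve $E$. Write $\Gamma_t$ for the chirality operator of $g^M_t$ and $\B^H_{\bar 0}(t)=\big(\Gamma_t\nabla^H+\nabla^H\Gamma_t\big)\big|_{\Omega^{\bar 0}(M,E)}$. For every $t$ this is a first order elliptic operator whose leading symbol coincides with that of the untwisted odd signature operator $\B_{\bar 0}(\nabla,g^M_t)$, since $H\wedge\cdot$ and all of its commutators with $\Gamma_t$ have order zero; in particular the leading symbol is self-adjoint, so Definition~\ref{D:etainv} applies, and, as in \cite[Section 9]{BK3}, $t\mapsto\eta(\B^H_{\bar 0}(t))$ is continuous modulo $\mathbb{Z}$: the integer jumps of $\eta(0,\B^H_{\bar 0}(t))$ at eigenvalue crossings of the imaginary axis are compensated by the changes of $m_{\pm}$ in \eqref{E:etainv11}, while $m_0(\B^H_{\bar 0}(t))=\dim H^{\bar 0}(M,E,H)$ (as $(\B^H_{\bar 0})^2$ is, up to sign, the twisted Laplacian $\Delta^H_{\bar 0}$) is independent of $t$ because the twisted cohomology does not depend on $g^M$.

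The heart of the argument is that the derivative of this $\mathbb{C}/\mathbb{Z}$-valued function is \emph{local and proportional to $\operatorname{rank}(E)$}. By \cite{APS2,Gi} (cf.\ \cite[Section 9]{BK3}), $\tfrac{d}{dt}\eta(\B^H_{\bar 0}(t))$ is the integral over $M$ of a density $\alpha_t(x)$ that is a universal local expression in a finite jet at $x$ of $g^M_t$, $\dot g^M_t$, $\nabla$ and $H$; this step is unaffected by the zeroth order perturbation $H\wedge\cdot$ and by the fact that $\nabla$ need not be Hermitian. Fixing $x_0\in M$ and a $\nabla$-parallel frame of $E$ near $x_0$, the operator $\B^H_{\bar 0}(t)$ becomes, in this frame, $\operatorname{rank}(E)$ copies of the even part $\B^H_{\trivial}(t)=\B_{\bar 0}(\nabla^H_{\trivial},g^M_t)$ of the twisted odd signature operator of the trivial line bundle with the same flux $H$; since $\alpha_t(x_0)$ depends only on this local model, $\alpha_t(x_0)=\operatorname{rank}(E)\cdot\alpha^{\trivial}_t(x_0)$. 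Integrating over $M$ gives, modulo $\mathbb{Z}$,
\[
\frac{d}{dt}\,\eta(\B^H_{\bar 0}(t))=\operatorname{rank}(E)\cdot\frac{d}{dt}\,\eta(\B^H_{\trivial}(t)),
\]
and hence $\eta(\B^H_{\bar 0}(t))-\operatorname{rank}(E)\,\eta(\B^H_{\trivial}(t))$ is independent of $t$ modulo $\mathbb{Z}$.

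Finally I would pass from $\eta(\B^H_{\trivial}(t))$ to $\eta_{\trivial}(t)=\tfrac12\,\eta(0,\B^H_{\trivial}(t))$. Because $H$ has odd degree and the trivial line bundle is Hermitian-flat, $\B^H_{\trivial}(t)$ is self-adjoint, so $m_{\pm}(\B^H_{\trivial}(t))=0$ and, by \eqref{E:etainv11}, $\eta(\B^H_{\trivial}(t))=\eta_{\trivial}(t)+\tfrac12\,m_0(\B^H_{\trivial}(t))$ with $m_0(\B^H_{\trivial}(t))=\dim H^{\bar 0}(M,H)$ again $t$-independent. Substituting into the identity of the previous paragraph shows that $\eta(\B^H_{\bar 0}(t))-\operatorname{rank}(E)\,\eta_{\trivial}(t)$ is independent of $t\in(t_0-\delta,t_0+\delta)$ modulo $\mathbb{Z}$, as claimed. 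The step I expect to require the most care is the locality assertion for $\tfrac{d}{dt}\eta(\B^H_{\bar 0}(t))$ and its multiplicativity in $\operatorname{rank}(E)$ on a parallel frame — essentially the content of \cite{APS2,Gi}, which must be seen to survive the twist by $H$ and the possibly non-Hermitian $\nabla$ — together with the modulo-$\mathbb{Z}$ bookkeeping of the integer corrections $m_{\pm},m_0$ needed to make $\eta(\cdot)$ a well-defined continuous $\mathbb{C}/\mathbb{Z}$-valued function along the family.
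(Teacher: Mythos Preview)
Your proposal is correct and follows the same approach the paper indicates: the paper gives no detailed proof of this lemma, merely pointing to \cite{APS2} and \cite{Gi} (and, implicitly, to the untwisted treatment in \cite[Section~9]{BK3}), and your sketch is precisely the standard expansion of that citation --- locality of the variation of the $\eta$-invariant, reduction on a $\nabla$-parallel frame to $\operatorname{rank}(E)$ copies of the trivial-bundle operator, and bookkeeping of the integer corrections $m_\pm,m_0$ to obtain a well-defined $\mathbb{C}/\mathbb{Z}$-valued function. Your observation that $H\wedge\cdot$ is order zero, so the leading symbol and hence the locality argument of \cite{APS2,Gi} are unaffected by the twist, is exactly the point.

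One inaccuracy worth correcting: your parenthetical that ``$(\B^H_{\bar 0})^2$ is, up to sign, the twisted Laplacian $\Delta^H_{\bar 0}$'' is false when $\nabla$ is not Hermitian. In general $(\nabla^H)^*=\Gamma\nabla'^H\Gamma$ (cf.\ \eqref{E:nahdu1}), so $(\B^H)^2=(\Gamma\nabla^H)^2+(\nabla^H\Gamma)^2$ differs from $\Delta^H=(\nabla^H)^*\nabla^H+\nabla^H(\nabla^H)^*$, and $m_0(\B^H_{\bar 0}(t))$ need not equal $\dim H^{\bar 0}(M,E,H)$. This does not damage your argument, since the mod~$\mathbb{Z}$ continuity of $\eta(\B^H_{\bar 0}(t))$ follows from Gilkey's general theory for families with self-adjoint leading symbol and does not require identifying $m_0$; your use of $(\B^H_{\trivial})^2=\Delta^H_{\trivial}$ in the final paragraph \emph{is} valid because the trivial line bundle is Hermitian-flat.
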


\subsection{Removing the metric anomaly and the definition of the twisted refined analytic torsion}
The following theorem is the main theorem of this subsection.
\begin{theorem}
Let $M$ be an odd dimensional oriented closed Riemannian manifold. Let $(E,\nabla,h^E)$ be a flat complex vector bundle over $M$ and $H$ is a closed differential form on $M$ of odd degree, other than one form. Then the element 
\begin{equation}\label{E:reffor}
\rho(\nabla^H,g^M) \cdot e^{i \pi (\operatorname{rank} (E))\eta_{\trivial}} \in \Det\big(H^\bullet(M,E,H)\big),
\end{equation}
where $\nabla^H:=\nabla + H \wedge \cdot$ and $\rho(\nabla^H,g^M) \in \Det\big(H^\bullet(M,E,H)\big)$ is defined in \eqref{E:rhoh}, is independent of $g^M$.
\end{theorem}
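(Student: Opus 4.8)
The plan is to combine equation \eqref{E:pmfor} with Lemmas \ref{L:meind}, \ref{L:etaf} and \ref{L:rhopm} to see that the element \eqref{E:reffor} is locally constant along any smooth family of metrics, and then to promote this to honest $g^M$-independence using the connectedness of the space of Riemannian metrics together with a continuity argument.

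Fix $t_0\in\mathbb{R}$ and a smooth family $g^M_t$ of Riemannian metrics, and choose $\lambda\ge 0$, $\delta>0$ and the Agmon angle $\theta_0$ exactly as in the paragraph preceding \eqref{E:pmfor}; in particular $\lambda$ avoids the spectrum of $(\B^H(t))^2$ and the integers $d^-_{\bar k,\lambda}$ are $t$-independent for $t\in(t_0-\delta,t_0+\delta)$. For such $t$, equation \eqref{E:pmfor} gives
\[
\rho_H(t)=\pm\, e^{\xi_\lambda(t,\theta_0)}\cdot e^{-i\pi\eta_\lambda(\nabla^H,t)}\cdot e^{\frac{i\pi}{2}\sum_{k=0,1}(-1)^k d^-_{\bar k,\lambda}}\cdot\rho_{\Gamma_{t,[0,\lambda]}}.
\]
By Lemma \ref{L:meind} the product $e^{\xi_\lambda(t,\theta_0)}\cdot\rho_{\Gamma_{t,[0,\lambda]}}$ is independent of $t$ on this interval, and the exponential factor involving the $d^-_{\bar k,\lambda}$ is constant there by the choice of $\delta$. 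Hence $\rho_H(t)=\pm\, c_0\cdot e^{-i\pi\eta_\lambda(\nabla^H,t)}$ for a fixed element $c_0\in\Det\big(H^\bullet(M,E,H)\big)$. On the other hand, Lemma \ref{L:etaf} together with Lemma \ref{L:rhopm} yields, modulo $\mathbb{Z}$ and for $t\in(t_0-\delta,t_0+\delta)$,
\[
\eta_\lambda(\nabla^H,t)-\eta_\lambda(\nabla^H,t_0)\equiv\eta(\B^H_{\bar 0}(t))-\eta(\B^H_{\bar 0}(t_0))\equiv\operatorname{rank}(E)\big(\eta_{\trivial}(t)-\eta_{\trivial}(t_0)\big),
\]
so $e^{-i\pi\eta_\lambda(\nabla^H,t)}=\pm\, e^{-i\pi\eta_\lambda(\nabla^H,t_0)}\cdot e^{-i\pi\operatorname{rank}(E)(\eta_{\trivial}(t)-\eta_{\trivial}(t_0))}$. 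Multiplying, the $\eta_{\trivial}$-dependence cancels against the exponential in \eqref{E:reffor} and we obtain
\[
\rho_H(t)\cdot e^{i\pi\operatorname{rank}(E)\eta_{\trivial}(t)}=\pm\,\rho_H(t_0)\cdot e^{i\pi\operatorname{rank}(E)\eta_{\trivial}(t_0)},\qquad t\in(t_0-\delta,t_0+\delta).
\]

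Write $\sigma(g^M):=\rho(\nabla^H,g^M)\cdot e^{i\pi\operatorname{rank}(E)\eta_{\trivial}}$. Since the space of Riemannian metrics on $M$ is convex, any two metrics $g^M_0,g^M_1$ can be joined by a smooth path; covering $[0,1]$ by finitely many intervals of the above type shows $\sigma(g^M_1)=\pm\,\sigma(g^M_0)$, so in particular $\sigma(g^M)^2$ is independent of $g^M$. To remove the sign one uses that $t\mapsto\sigma(g^M_t)$ is continuous along any smooth path: on each local interval $\rho_H(t)=\Det_{\gr,\theta}(\B^H_{\bar 0,(\lambda,\infty)}(t))\cdot\rho_{\Gamma_{t,[0,\lambda]}}$ with $\lambda$ fixed, and both factors depend smoothly on $t$ because the operator family $\B^H(t)$ and the chirality operator $\Gamma_t$ do, while $e^{i\pi\operatorname{rank}(E)\eta_{\trivial}(t)}$ depends continuously on $t$ as well (as in \cite[Section 9]{BK3}). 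A continuous path in the line $\Det\big(H^\bullet(M,E,H)\big)$ whose values lie in the two-point set $\{\pm\,\sigma(g^M_0)\}$ is necessarily constant, so $\sigma(g^M_1)=\sigma(g^M_0)$, which is the assertion.

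The bookkeeping in the first two paragraphs is routine once Lemmas \ref{L:meind}, \ref{L:etaf} and \ref{L:rhopm} are in hand; the one genuinely delicate point is the final sign removal, which is needed because \eqref{E:pmfor} and the mod-$\mathbb{Z}$ statements of the two $\eta$-lemmas only pin $\rho_H(t)\cdot e^{i\pi\operatorname{rank}(E)\eta_{\trivial}(t)}$ down up to sign. Resolving it requires checking that $e^{i\pi\operatorname{rank}(E)\eta_{\trivial}(t)}$ varies continuously under the smooth deformation of the metric, i.e.\ that the relevant odd-dimensional $\eta$-invariant of the trivial-coefficient twisted signature operator has no spurious jumps, and then invoking connectedness of the space of metrics exactly as in Braverman--Kappeler's treatment of the untwisted case in \cite[Section 9]{BK3}.
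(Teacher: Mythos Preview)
Your proof is correct and follows essentially the same route as the paper: combine \eqref{E:pmfor} with Lemma~\ref{L:meind} and the $\eta$-invariant Lemmas~\ref{L:etaf} and~\ref{L:rhopm} to obtain local constancy up to sign, then remove the sign by continuity and connectedness of the space of metrics. The paper's own proof is somewhat terser (it cites \eqref{E:xivarfor}, \eqref{E:meana} and Lemma~\ref{L:rhopm} without explicitly naming Lemma~\ref{L:etaf}, and asserts continuity directly), but the structure is the same.
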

\begin{proof}
Consider a smooth family $g^M_t, t \in \mathbb{R}$ of Riemannian metrics on $M$. From \eqref{E:pmfor}, we obtain for $t \in (t_0-\delta, t_0+\delta)$
\begin{equation}\label{E:meana}
\rho^H(t) \cdot e^{i \pi (\operatorname{rank} (E))\eta_{\trivial}}=\pm e^{\xi_{\lambda}(t,\theta_0)} \cdot e^{-i \pi \eta_\lambda(\nabla^H,t)} \cdot e^{\frac{i \pi}{2}\sum_{k=0,1}(-1)^kd_{\bar{k},\lambda}} \cdot  \rho_{\Gamma_{t,[0,\lambda]}} \cdot e^{i \pi (\operatorname{rank} (E))\eta_{\trivial}}.
\end{equation}
Combining \eqref{E:xivarfor}, \eqref{E:meana} with Lemma \ref{L:rhopm} we conclude that for any $t_1,t_2 \in (t_0-\delta,t_0+\delta)$
\[
\rho^H(t_1) \cdot e^{i \pi (\operatorname{rank} (E))\eta_{\trivial}}= \pm \rho^H(t_2) \cdot e^{i \pi (\operatorname{rank} (E))\eta_{\trivial}}.
\] 
Since the function $\rho^H(t) \cdot e^{i \pi (\operatorname{rank} (E))\eta_{\trivial}}$ is continuous and nonzero, the sign in the right hand side of the equality must be positive. This proves the statement.
\end{proof}

\begin{definition}\label{D:maindef}
Let $M$ be an odd dimensional oriented closed Riemannian manifold. Let $(E,\nabla,h^E)$ be a flat complex vector bundle over $M$ and $H$ is a closed differential form on $M$ of odd degree, other than one form. The {\em twisted refined analytic torsion} $\rho_{\operatorname{an}}(\nabla^H)$ is the element of $\Det\big(H^\bullet(M,E,H)\big)$ defined by \eqref{E:reffor}.
\end{definition}

\subsection{Variation of refined analytic torsion with respect to the flux in a cohomology class}
Suppose that the (real) flux form $H$ is deformed smoothly along a one-parameter family with parameter $v \in \mathbb{R}$ in such a way that the cohomology class $[H] \in H^{\bar 1}(M, \mathbb{R})$ is fixed. Then $\frac{d}{dv}H=-dB$ for some form $B \in \Omega^{\bar 0}(M)$ that depends smoothly on $v$. Let $\beta=B \wedge \cdot$.
Fix $v_0 \in \mathbb{R}$ and choose $\lambda > 0$ such that there are no eigenvalues of $(\B^H)^2(v_0)$ of absolute value $\lambda$. Further, assume that $\lambda$ is big enough so that the real parts of eigenvalues of $(\B^{H}_{(\lambda,\infty)}(v_0))^2$ are all greater than $0$. Then there exists $\delta > 0$ small enough such that the same holds for the spectrum of $(\B^{H}(v))^2$ for $v \in (v_0-\delta,v_0+\delta)$. For simplicity, we often omit the parameter $v$ in the notations of operators in the following discussion.

We have the following two lemmas, see also Lemma 3.5 and lemma 3.7 of \cite{MW}.
\begin{lemma}\label{L:flva1}
Under the above assumptions, we have
\[
\frac{d}{dv} \xi_\lambda = \sum_{k=0,1}(-1)^k \Tr (\beta \big|_{\Omega^{\bar k}_{[0,\lambda]}(M,E)}).
\]
\end{lemma}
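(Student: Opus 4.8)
The plan is to compute $\frac{d}{dv}\xi_\lambda$ directly from the definition \eqref{E:xino} of $\xi_\lambda$ as a combination of $\LDet_{2\theta}$ of the squares $(\B^{H,+}_{\bar k,(\lambda,\infty)})^2$, following the same heat-kernel strategy used in the proof of Lemma \ref{L:meind}. The key point is that here the metric $g^M$ (and hence $\Gamma$) is held fixed, but the flux form $H$ varies, so that $\nabla^H$ changes by $\frac{d}{dv}\nabla^H = \frac{d}{dv}(H\wedge\cdot) = -dB\wedge\cdot = -[\nabla,\beta]$ on forms (using $dB\wedge\omega = \nabla(B\wedge\omega) - B\wedge\nabla\omega$ together with $\nabla H = 0$; more conceptually $\varepsilon_B$ conjugates $\nabla^H$ to $\nabla^{H'}$, so the infinitesimal generator of the variation is $\operatorname{ad}(\beta)$). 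Thus $\B^H = \Gamma\nabla^H + \nabla^H\Gamma$ varies by $\dot{\B}^H = -\Gamma[\nabla,\beta] - [\nabla,\beta]\Gamma$, and one checks that up to conjugation by $\varepsilon_B$ the whole family $(\B^H(v), \Gamma)$ is equivalent to a fixed operator, so that the eigenvalues of $(\B^H)^2$ are constant; what varies is only the identification of the generalized eigenspaces inside $\Omega^\bullet(M,E)$.

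First I would set up the zeta-function
\[
g(s,v) = \sum_{k=0,1}(-1)^k\,\Gamma(s)\,\zeta\!\Big(s,\,(\Gamma\nabla^H)^2\big|_{\Omega^{\bar k}_{+,(\lambda,\infty)}(M,E)}\Big),
\]
so that, exactly as in \eqref{E:plva11}, $\xi_\lambda = -\tfrac12\lim_{s\to 0}\big[g(s,v) - \tfrac1s\sum_{k}(-1)^k\zeta(0,\cdots)\big]$. Next I would differentiate in $v$: since $\frac{d}{dv}(\Gamma\nabla^H) = \Gamma\frac{d}{dv}\nabla^H$ and $\frac{d}{dv}\nabla^H$ acts within the de Rham complex, one gets $\frac{d}{dv}(\Gamma\nabla^H)^2 = (\Gamma\nabla^H)\cdot X + X\cdot(\Gamma\nabla^H)$ for an appropriate local operator $X$ involving $\beta$. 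Using the trace identity $\operatorname{Tr}(AB)=\operatorname{Tr}(BA)$ and the semigroup property of the heat operator precisely as in \eqref{E:trequality1}, the cross terms combine and one finds
\[
\frac{d}{dv}g(s,v) = -s\sum_{k=0,1}(-1)^k\int_0^\infty u^{s-1}\,\Tr\Big[\beta\,\exp\big(-u(\B^H_{\bar k,(\lambda,\infty)})^2\big)\Big]\,du + (\text{term that is }O(s)\text{ and regular}),
\]
after an integration by parts, the extra term coming from the symmetrization of $X$. The heat-trace $\Tr[\beta\,\exp(-u(\B^H)^2)]$ has, for small $u$, no constant term in its asymptotic expansion because $\beta$ is a local operator of order $0$ (multiplication by the even form $B$) and $\dim M$ is odd — this is the same parity argument as in Lemma \ref{L:meind}. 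Splitting the integral at $u=1$ and separating the $[0,\lambda]$ and $(\lambda,\infty)$ parts of the spectrum (using the finite-dimensionality of $\Omega^\bullet_{[0,\lambda]}$ and exponential decay for large $u$), the $(\lambda,\infty)$ contribution is regular at $s=0$ and the only pole comes from the finite-dimensional piece, giving at $s=0$ the value $\sum_{k}(-1)^k\Tr(\beta|_{\Omega^{\bar k}_{[0,\lambda]}(M,E)})$. Tracking the factor of $-\tfrac12$ in \eqref{E:plva11} and the cancellation of the $\zeta(0,\cdots)$-counterterm under $\frac{d}{dv}$ (as in \eqref{E:plva10}) yields the claimed formula.

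The main obstacle I anticipate is the bookkeeping in the differentiation step: unlike in Lemma \ref{L:meind}, where $\dot\Gamma_t\Gamma_t$ is skew ($\dot\Gamma_t\Gamma_t = -\Gamma_t\dot\Gamma_t$) and this skew-symmetry is what produced the clean result, here the variation operator $\beta$ is not related to $\Gamma$ by any such identity, so one must be careful that the two expressions for $\frac{d}{dv}g$ (coming from writing $(\B^H)^2$ as $(\Gamma\nabla^H)^2$ on the $+$ spaces versus $(\nabla^H\Gamma)^2$ on the $-$ spaces, cf. \eqref{E:plva3}--\eqref{E:plva4}) combine correctly, and that the spurious $O(s)$ terms genuinely vanish at $s=0$. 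The cleanest route is to first reduce to the case $B$ fixed by conjugating with $\varepsilon_B$: since $\varepsilon_{vB}$ is invertible and intertwines $\nabla^{H(v)}$ with the fixed $\nabla^{H(v_0)}$ up to the similarity $\varepsilon_{vB}^{-1}$, one can transport everything to a $v$-independent complex where the variation is purely the explicit finite-dimensional change of the $\Omega^\bullet_{[0,\lambda]}$-piece; but carrying this out while keeping track of the $\zeta$-regularization on the infinite-dimensional $(\lambda,\infty)$-part is where the technical care is needed. With that in place, the formula reduces, just as in \eqref{E:plva9}, to the residue computation above.
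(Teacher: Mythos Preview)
Your overall strategy is the same as the paper's: set up the Mellin--heat function $f(s,v)$ as in \eqref{E:plva3}, differentiate in $v$ using $\frac{d}{dv}\nabla^H=[\beta,\nabla^H]$, integrate by parts, and extract the finite $[0,\lambda]$ contribution from the small-time asymptotics in odd dimension. So the plan is correct in outline.

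The genuine gap is precisely the step you flag as ``bookkeeping''. When you differentiate $(\Gamma\nabla^H)^2$ you get $\Gamma[\beta,\nabla^H]\Gamma\nabla^H+\Gamma\nabla^H\Gamma[\beta,\nabla^H]$, and after opening the commutators and using the trace property (as in \eqref{E:fvh3}--\eqref{E:fvh6}) one obtains \emph{two} separate contributions, one involving $\Gamma\beta\Gamma$ and one involving $\beta$. The paper's computation shows that on the super-trace these are equal (because $\Gamma$ commutes with $(\B^H)^2$, so conjugation by $\Gamma$ exchanges the $\bar 0$- and $\bar 1$-pieces), and they \emph{add} to give an overall coefficient $2$. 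Your displayed formula
\[
\frac{d}{dv}g(s,v)=-s\sum_{k}(-1)^k\int_0^\infty u^{s-1}\Tr\big[\beta\,e^{-u(\B^H_{\bar k,(\lambda,\infty)})^2}\big]du+(\cdots)
\]
has coefficient $-s$ rather than $\pm 2s$; feeding this into \eqref{E:plva11} would produce $-\tfrac12\sum_k(-1)^k\Tr(\beta|_{[0,\lambda]})$, off by a factor $-2$ from the statement. The ``$O(s)$ and regular'' term you mention cannot fix this, since it vanishes at $s=0$. So the proof as written does not close: you need to actually carry out the analogue of \eqref{E:fvh3}--\eqref{E:fvha2} and exhibit the doubling.

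Your ``cleanest route'' is a good idea but not quite as you describe it. Conjugation by $\varepsilon_B$ intertwines $\nabla^{H(v)}$ with a fixed $\nabla^{H_0}$, but it does \emph{not} fix $\Gamma$; rather it replaces $\Gamma$ by $\Gamma_v:=\varepsilon_B^{-1}\Gamma\varepsilon_B$, which is still a chirality operator. Since $\xi_\lambda$ is invariant under simultaneous conjugation, this reduces the problem to the setting of Lemma~\ref{L:meind} with $\dot\Gamma_v\Gamma_v=\Gamma_v\beta\Gamma_v-\beta$ (a local $0$-th order operator, so the odd-dimension argument still applies). Plugging this into \eqref{E:xivarfor} and using $\sum_k(-1)^k\Tr(\Gamma\beta\Gamma|_{\bar k})=-\sum_k(-1)^k\Tr(\beta|_{\bar k})$ gives the result with the correct coefficient. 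This is a legitimate alternative to the paper's direct computation, but note that it does \emph{not} reduce the variation to ``purely the finite-dimensional $[0,\lambda]$-piece'' as you suggest --- the whole chirality operator changes, and one must invoke the full analytic content of Lemma~\ref{L:meind}.
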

\begin{proof}
As in the proof of Lemma~\ref{L:meind}, we set
\begin{equation}\label{E:fvh1}
f(s,v)  =  \sum_{k=0,1}(-1)^k \int_0^\infty u^{s-1} \Tr \Big[\exp \big(-u(\Gamma \nabla^H)^2 \big|_{\Omega^{\bar k}_{+,(\lambda,\infty)}(M,E)}\big) \Big]du.\\
\end{equation}
We note that $B$, hence $\beta$, is real. By \eqref{E:fvh1} and the fact that 
\[
\frac{d}{dv} \nabla^H = [\beta, \nabla^H],
\]
we have
\begin{align}\label{E:fvh2}
& \frac{d}{dv}f(s,v) \nonumber \\ & =  \sum_{k=0,1}(-1)^k \int_0^\infty u^{s-1} \Tr \Big[-u \big(\Gamma [\beta, \nabla^H] \Gamma \nabla^H \big)\big|_{\Omega^{\bar k}_{+,(\lambda,\infty)}(M,E)}\exp \big(-u(\Gamma \nabla^H)^2 \big|_{\Omega^{\bar k}_{+,(\lambda,\infty)}(M,E)}\big) \Big]du  \nonumber \\
 &+ \sum_{k=0,1}(-1)^k \int_0^\infty u^{s-1} \Tr \Big[-u  \big(\Gamma \nabla^H \Gamma [\beta, \nabla^H]\big)\big|_{\Omega^{\bar k}_{+,(\lambda,\infty)}(M,E)} \exp \big(-u(\Gamma \nabla^H)^2 \big|_{\Omega^{\bar k}_{+,(\lambda,\infty)}(M,E)}\big) \Big]du.
\end{align}
Using the fact that $\Gamma^2=1$, we have
\begin{align}\label{E:fvh3}
& \Tr \Big[ \big(\Gamma \beta \nabla^H \Gamma \nabla^H \big)\big|_{\Omega^{\bar k}_{+,(\lambda,\infty)}(M,E)}  \exp \big(-u(\Gamma \nabla^H)^2 \big|_{\Omega^{\bar k}_{+,(\lambda,\infty)}(M,E)}\big) \Big] \nonumber  \\
& = \Tr \Big[ \big(\Gamma \beta \Gamma \big)\big|_{\Omega^{\bar k}_{+,(\lambda,\infty)}(M,E)} \cdot \big( \Gamma \nabla^H \big)^2 \big|_{\Omega^{\bar k}_{+,(\lambda,\infty)}(M,E)}  \exp \big(-u(\Gamma \nabla^H)^2 \big|_{\Omega^{\bar k}_{+,(\lambda,\infty)}(M,E)}\big) \Big]. 
\end{align}
By using the trace property, $\Gamma^2=1$, and the semi-group property of the heat operator, we have
\begin{align}\label{E:fvh4}
& \Tr \Big[ \big(\Gamma  \nabla^H \beta \Gamma \nabla^H \big)\big|_{\Omega^{\bar k}_{+,(\lambda,\infty)}(M,E)} \exp \big( -u(\Gamma \nabla^H)^2 \big|_{\Omega^{\bar k}_{+,(\lambda,\infty)}(M,E)} \big) \Big] \nonumber  \\
& =  \Tr \Big[ \big(\Gamma  \nabla^H \Gamma \cdot \Gamma \beta \Gamma \nabla^H \big)\big|_{\Omega^{\bar k}_{+,(\lambda,\infty)}(M,E)}  \exp \big(-\frac{1}{2}u(\Gamma \nabla^H)^2 \big|_{\Omega^{\bar k}_{+,(\lambda,\infty)}(M,E)}\big) \nonumber\\ 
& \qquad \cdot \exp \big(-\frac{1}{2}u(\Gamma \nabla^H)^2 \big|_{\Omega^{\bar k}_{+,(\lambda,\infty)}(M,E)}\big) \Big]  \nonumber \\
&= \Tr \Big[ \exp \big(-\frac{1}{2}u(\Gamma \nabla^H)^2 \big|_{\Omega^{\bar k}_{+,(\lambda,\infty)}(M,E)}\big) \cdot \big(\Gamma  \nabla^H \Gamma \cdot \Gamma \beta \Gamma \nabla^H \big)\big|_{\Omega^{\bar k}_{+,(\lambda,\infty)}(M,E)} \nonumber \\ & \qquad  \cdot \exp \big(-\frac{1}{2}u(\Gamma \nabla^H)^2 \big|_{\Omega^{\bar k}_{+,(\lambda,\infty)}(M,E)}\big)  \Big] \nonumber  \\
& = \Tr \Big[ \big( \Gamma \beta \Gamma \nabla^H \big)\big|_{\Omega^{\bar k}_{+,(\lambda,\infty)}(M,E)}  \exp \big(-\frac{1}{2}u(\Gamma \nabla^H)^2 \big|_{\Omega^{\bar k}_{+,(\lambda,\infty)}(M,E)}\big)\nonumber \\ & \qquad \cdot \exp \big(-\frac{1}{2}u(\Gamma \nabla^H)^2 \big|_{\Omega^{\bar k}_{+,(\lambda,\infty)}(M,E)}\big) \cdot \big(\Gamma  \nabla^H \Gamma \big)\big|_{\Omega^{\overline{k+1}}_{-,(\lambda,\infty)}(M,E)} \Big]  \nonumber \\
& = \Tr \Big[ \big( \Gamma \beta \Gamma \big)\big|_{\Omega^{\overline{k+1}}_{-,(\lambda,\infty)}(M,E)} \cdot \big( \nabla^H \Gamma \big)^2\big|_{\Omega^{\overline{k+1}}_{-,(\lambda,\infty)}(M,E)} \exp \big(-u( \nabla^H \Gamma)^2 \big|_{\Omega^{\overline{k+1}}_{-,(\lambda,\infty)}(M,E)}\big) \Big].   
\end{align}
For the last equality of \eqref{E:fvh4}, we used the fact that
\begin{align*}
& \nabla^H\big|_{\Omega^{\bar k}_{+,(\lambda,\infty)}(M,E)} \exp \big(-u(\Gamma \nabla^H)^2 \big|_{\Omega^{\bar k}_{+,(\lambda,\infty)}(M,E)}\big)\big(\Gamma  \nabla^H \Gamma \big)\big|_{\Omega^{\overline{k+1}}_{-,(\lambda,\infty)}(M,E)}\\
& = \big( \nabla^H \Gamma \big)^2\big|_{\Omega^{\overline{k+1}}_{-,(\lambda,\infty)}(M,E)} \exp \big(-u( \nabla^H \Gamma)^2 \big|_{\Omega^{\overline{k+1}}_{-,(\lambda,\infty)}(M,E)}\big).
\end{align*}
By combining \eqref{E:fvh3} with \eqref{E:fvh4}, we obtain
\begin{align}\label{E:fvha1}
& \sum_{k=0,1}(-1)^k \int_0^\infty u^{s}  \Tr \Big[ \big(\Gamma [\beta, \nabla^H] \Gamma \nabla^H \big)\big|_{\Omega^{\bar k}_{+,(\lambda,\infty)}(M,E)}  \exp \big(-u(\Gamma \nabla^H)^2 \big|_{\Omega^{\bar k}_{+,(\lambda,\infty)}(M,E)}\big) \Big] du \nonumber \\
& = \sum_{k=0,1}(-1)^k \int_0^\infty u^{s} \Big( \, \Tr \Big[ \big(\Gamma \beta \Gamma \big)\big|_{\Omega^{\bar k}_{+,(\lambda,\infty)}(M,E)} \cdot \big( \Gamma \nabla^H \big)^2 \big|_{\Omega^{\bar k}_{+,(\lambda,\infty)}(M,E)}  \exp \big(-u(\Gamma \nabla^H)^2 \big|_{\Omega^{\bar k}_{+,(\lambda,\infty)}(M,E)}\big) \Big] \nonumber \\
& \qquad - \Tr \Big[ \big( \Gamma \beta \Gamma \big)\big|_{\Omega^{\overline{k+1}}_{-,(\lambda,\infty)}(M,E)} \cdot \big( \nabla^H \Gamma \big)^2\big|_{\Omega^{\overline{k+1}}_{-,(\lambda,\infty)}(M,E)} \exp \big(-u( \nabla^H \Gamma)^2 \big|_{\Omega^{\overline{k+1}}_{-,(\lambda,\infty)}(M,E)}\big) \Big] \, \Big) du \nonumber \\
 & = \sum_{k=0,1}(-1)^k \int_0^\infty u^{s} \Tr \Big[ \big(\Gamma \beta \Gamma \big)\big|_{\Omega^{\bar k}_{(\lambda,\infty)}(M,E)}\cdot (\B^H_{(\lambda,\infty)})^2\exp \big(-u(\B^H_{(\lambda,\infty)})^2  \Big]du.
\end{align}
Similarly, we have
\begin{align}\label{E:fvh5}
& \Tr \Big[ \big(\Gamma \nabla^H \Gamma \nabla^H \beta  \big)\big|_{\Omega^{\bar k}_{+,(\lambda,\infty)}(M,E)}  \exp \big(-u(\Gamma \nabla^H)^2 \big|_{\Omega^{\bar k}_{+,(\lambda,\infty)}(M,E)}\big) \Big] \nonumber  \\
& = \Tr \Big[  \beta\big|_{\Omega^{\bar k}_{+,(\lambda,\infty)}(M,E)}  \cdot \big( \Gamma \nabla^H \big)^2 \big|_{\Omega^{\bar k}_{+,(\lambda,\infty)}(M,E)}  \exp \big(-u(\Gamma \nabla^H)^2 \big|_{\Omega^{\bar k}_{+,(\lambda,\infty)}(M,E)}\big) \Big]
\end{align}
and
\begin{align}\label{E:fvh6}
& \Tr \Big[ \big(\Gamma  \nabla^H  \Gamma \beta \nabla^H \big)\big|_{\Omega^{\bar k}_{+,(\lambda,\infty)}(M,E)} \exp \big( -u(\Gamma \nabla^H)^2 \big|_{\Omega^{\bar k}_{+,(\lambda,\infty)}(M,E)} \big) \Big] \nonumber  \\
& = \Tr \Big[  \beta\big|_{\Omega^{\overline{k+1}}_{-,(\lambda,\infty)}(M,E)}  \cdot \big( \nabla^H \Gamma \big)^2\big|_{\Omega^{\overline{k+1}}_{-,(\lambda,\infty)}(M,E)} \exp \big(-u( \nabla^H \Gamma)^2 \big|_{\Omega^{\overline{k+1}}_{-,(\lambda,\infty)}(M,E)}\big) \Big]. 
\end{align}
By combining \eqref{E:fvh5} with \eqref{E:fvh6}, we have
\begin{align}\label{E:fvha2}
& \sum_{k=0,1}(-1)^k \int_0^\infty u^{s}  \Tr \Big[ \big(\Gamma \nabla^H \Gamma [\beta, \nabla^H]  \big)\big|_{\Omega^{\bar k}_{+,(\lambda,\infty)}(M,E)}  \exp \big(-u(\Gamma \nabla^H)^2 \big|_{\Omega^{\bar k}_{+,(\lambda,\infty)}(M,E)}\big) \Big] du \nonumber \\
& = \sum_{k=0,1}(-1)^k \int_0^\infty u^{s} \Big( \, \Tr \Big[  \beta\big|_{\Omega^{\bar k}_{+,(\lambda,\infty)}(M,E)}  \cdot \big( \Gamma \nabla^H \big)^2 \big|_{\Omega^{\bar k}_{+,(\lambda,\infty)}(M,E)}  \exp \big(-u(\Gamma \nabla^H)^2 \big|_{\Omega^{\bar k}_{+,(\lambda,\infty)}(M,E)}\big) \Big] \nonumber \\
& \qquad - \Tr \Big[  \beta\big|_{\Omega^{\overline{k+1}}_{-,(\lambda,\infty)}(M,E)} \cdot \big( \nabla^H \Gamma \big)^2\big|_{\Omega^{\overline{k+1}}_{-,(\lambda,\infty)}(M,E)} \exp \big(-u( \nabla^H \Gamma)^2 \big|_{\Omega^{\overline{k+1}}_{-,(\lambda,\infty)}(M,E)}\big) \Big] \, \Big) du \nonumber \\
&  = \sum_{k=0,1}(-1)^k \int_0^\infty u^{s} \Tr \Big[ \beta\big|_{\Omega^{\bar k}_{(\lambda,\infty)}(M,E)} \cdot (\B^H_{(\lambda,\infty)})^2\exp \big(-u(\B^H_{(\lambda,\infty)})^2  \Big]du.
\end{align}
Combining \eqref{E:fvha1}, \eqref{E:fvha2}, with \eqref{E:fvh2}, we obtain
\begin{align*}
\frac{d}{dv}f(s,v) & =  -\sum_{k=0,1}(-1)^k \int_0^\infty u^{s} \Tr \Big[ \big(\Gamma \beta \Gamma \big)\big|_{\Omega^{\bar k}_{(\lambda,\infty)}(M,E)}\cdot (\B^H_{(\lambda,\infty)})^2\exp \big(-u(\B^H_{(\lambda,\infty)})^2  \Big]du \nonumber \\
 &  - \sum_{k=0,1}(-1)^k \int_0^\infty u^{s} \Tr \Big[  \beta\big|_{\Omega^{\bar k}_{(\lambda,\infty)}(M,E)} \cdot (\B^H_{(\lambda,\infty)})^2 \exp \big(-u(\B^H_{(\lambda,\infty)})^2 \big) \Big]du \\
 &= - 2\sum_{k=0,1}(-1)^k \int_0^\infty u^{s} \Tr \Big[  \beta\big|_{\Omega^{\bar k}_{(\lambda,\infty)}(M,E)} \cdot (\B^H_{(\lambda,\infty)})^2 \exp \big(-u(\B^H_{(\lambda,\infty)})^2 \big) \Big]du,
\end{align*}
where for the latter equality we used the fact that $\Tr (\beta \big|_{\Omega^{\bar k}_{[0,\lambda]}(M,E)})=\Tr (\Gamma \beta \Gamma  \big|_{\Omega^{\bar k}_{[0,\lambda]}(M,E)})$.
The rest is similar to the proof of Lemma \ref{L:meind}. 
\end{proof}

\begin{lemma}\label{L:flva2}
Under the same assumptions, along any one parameter deformation of $H$ that fixes the cohomology class $[H]$, the element can be chosen so that
\[
\frac{d}{dv} \rho_{\Gamma_{[0,\lambda]}} = - \sum_{k=0,1}(-1)^k \Tr (\beta \big|_{\Omega^{\bar k}_{[0,\lambda]}(M,E)}) \rho_{\Gamma_{[0,\lambda]}},
\]
where we identify $\Det\big(H^\bullet(M,E,H)\big)$ along the deformation using \eqref{E:epsiso}.
\end{lemma}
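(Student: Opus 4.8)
The plan is to reduce the statement to the finite-dimensional variation formula \eqref{E:varfor} by a gauge transformation. Write $\beta_v = B_v\wedge\cdot$ and $\widehat{B}_v := \int_{v_0}^{v}B_{v'}\,dv'$. Since $B_v$ is of even degree, $\beta_v$ is an even operator on $\Omega^{\bullet}(M,E)$, and a short computation gives $\frac{d}{dv}\nabla^H = \dot H\wedge\cdot = -dB_v\wedge\cdot = [\beta_v,\nabla^H]$. Hence the family $\psi_v := \varepsilon_{\widehat{B}_v}$ (well defined because the operators $B_{v'}\wedge\cdot$ mutually commute) solves $\frac{d}{dv}\psi_v = \beta_v\psi_v$, $\psi_{v_0} = \operatorname{Id}$, and conjugates the differentials, $\psi_v^{-1}\nabla^{H(v)}\psi_v = \nabla^{H(v_0)}$. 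First I would use $\psi_v$ to transport the whole picture back to the \emph{fixed} complex $\big(\Omega^{\bullet}(M,E),\nabla^{H(v_0)}\big)$.

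Set $\widetilde{\Gamma}_v := \psi_v^{-1}\Gamma\psi_v$. Since $\psi_v$ preserves the $\mathbb{Z}_2$-grading and $\Gamma$ interchanges $\Omega^{\bar0}(M,E)$ and $\Omega^{\bar1}(M,E)$, each $\widetilde{\Gamma}_v$ is again a chirality operator on the fixed complex, with $\widetilde{\Gamma}_{v_0} = \Gamma$ and signature operator $\widetilde{\B}_v := \widetilde{\Gamma}_v\nabla^{H(v_0)} + \nabla^{H(v_0)}\widetilde{\Gamma}_v = \psi_v^{-1}\B^{H}(v)\psi_v$. As $\psi_v$ is an invertible bundle endomorphism, $\widetilde{\B}_v$ is elliptic, and $\psi_v$ carries the spectral subcomplex $\Omega^{\bullet}_{[0,\lambda],\widetilde{\B}_v}(M,E)$ isomorphically onto $\Omega^{\bullet}_{[0,\lambda]}(M,E)$ (the one built from $\B^{H}(v)$), intertwining the differentials, the chirality operators, and --- on cohomology --- the map $\varepsilon_{\widehat{B}_v}$ of \eqref{E:epsiso}. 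By the naturality of the construction in Definition~\ref{D:elerho1} (an isomorphism of $\mathbb{Z}_2$-graded complexes with chirality operators carries one refined torsion to the other), under the identification \eqref{E:epsiso} the element $\rho_{\Gamma_{[0,\lambda]}}$ for $\nabla^{H(v)}$ coincides with the refined torsion $\rho_{\widetilde{\Gamma}_{v,[0,\lambda]}}$ of $\big(\Omega^{\bullet}_{[0,\lambda],\widetilde{\B}_v}(M,E),\nabla^{H(v_0)},\widetilde{\Gamma}_v\big)$. Thus it suffices to differentiate $\rho_{\widetilde{\Gamma}_{v,[0,\lambda]}}$ inside the fixed line $\Det\big(H^{\bullet}(M,E,H(v_0))\big)$.

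This is now exactly the situation handled in the proof of Lemma~\ref{L:meind}: a fixed elliptic complex, a smooth family $\widetilde{\Gamma}_v$ of chirality operators, and the attendant finite-dimensional spectral subcomplexes. Applying \eqref{E:varfor} (in the form used there),
\[
\frac{d}{dv}\rho_{\widetilde{\Gamma}_{v,[0,\lambda]}} = \tfrac12\,\Tr_s\!\Big(\dot{\widetilde{\Gamma}}_v\,\widetilde{\Gamma}_v\Big|_{\Omega^{\bullet}_{[0,\lambda],\widetilde{\B}_v}(M,E)}\Big)\cdot\rho_{\widetilde{\Gamma}_{v,[0,\lambda]}}.
\]
Next I would evaluate this supertrace. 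From $\psi_v^{-1}\beta_v\psi_v = \beta_v$ one gets $\dot{\widetilde{\Gamma}}_v = \psi_v^{-1}[\Gamma,\beta_v]\psi_v = [\widetilde{\Gamma}_v,\beta_v]$, so $\dot{\widetilde{\Gamma}}_v\widetilde{\Gamma}_v = \widetilde{\Gamma}_v\beta_v\widetilde{\Gamma}_v - \beta_v$, using $\widetilde{\Gamma}_v^2 = \operatorname{Id}$. Because $\widetilde{\Gamma}_v$ preserves $\Omega^{\bullet}_{[0,\lambda],\widetilde{\B}_v}(M,E)$ and interchanges its two graded pieces, conjugation by it reverses the supertrace, so $\Tr_s(\widetilde{\Gamma}_v\beta_v\widetilde{\Gamma}_v|_{\cdot}) = -\Tr_s(\beta_v|_{\cdot})$, and the supertrace above equals $-2\,\Tr_s\big(\beta_v|_{\Omega^{\bullet}_{[0,\lambda],\widetilde{\B}_v}(M,E)}\big) = -2\sum_{k=0,1}(-1)^k\Tr\big(\beta_v|_{\Omega^{\bar k}_{[0,\lambda],\widetilde{\B}_v}(M,E)}\big)$. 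Transporting back by $\psi_v$, which commutes with $\beta_v$ and maps $\Omega^{\bar k}_{[0,\lambda],\widetilde{\B}_v}(M,E)$ onto $\Omega^{\bar k}_{[0,\lambda]}(M,E)$, rewrites these traces as $\Tr\big(\beta_v|_{\Omega^{\bar k}_{[0,\lambda]}(M,E)}\big)$, which is the asserted identity.

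The main obstacle is the point already suppressed in the proof of Lemma~\ref{L:meind}: \eqref{E:varfor} is stated for a \emph{fixed} finite-dimensional complex, whereas $\Omega^{\bullet}_{[0,\lambda],\widetilde{\B}_v}(M,E)$ moves with $v$. I would dispose of this by trivializing the family of spectral subcomplexes through a smooth family of isomorphisms that commute with $\nabla^{H(v_0)}$ and induce the identity on cohomology; the extra term produced by the derivative of such a trivialization is the supertrace of an endomorphism that commutes with the differential and induces $0$ on cohomology, hence vanishes, since the supertrace of a chain endomorphism equals the supertrace of the map it induces on cohomology. I would also recheck the sign in $\Tr_s(\widetilde{\Gamma}_v\beta_v\widetilde{\Gamma}_v|_{\cdot}) = -\Tr_s(\beta_v|_{\cdot})$ against the grading conventions of Section~\ref{S:rtrev}.
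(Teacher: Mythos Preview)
Your approach is correct and is essentially the same idea as the paper's: transport everything to a fixed complex via the gauge map $\varepsilon_B$, then differentiate. The paper's proof is extremely terse --- it simply asserts that, because $\varepsilon_B=e^{\beta}$, differentiating $(\Det(\varepsilon_B))^{-1}\rho_{\Gamma_{[0,\lambda]}}$ produces the trace term --- whereas you actually carry out the computation: you conjugate $\Gamma$ to $\widetilde{\Gamma}_v=\psi_v^{-1}\Gamma\psi_v$, invoke the finite-dimensional variation formula \eqref{E:varfor}, and evaluate $\Tr_s(\dot{\widetilde{\Gamma}}_v\widetilde{\Gamma}_v)=-2\Tr_s(\beta_v)$ explicitly. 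This is exactly the content the paper suppresses. Your treatment of the moving spectral subcomplex (trivializing by chain isomorphisms that induce the identity on cohomology, so the extra term is a supertrace of a null-homotopic chain map and vanishes) is also more careful than anything in the paper, which tacitly relies on the same device already in Lemma~\ref{L:meind}. Your caution about the sign in $\Tr_s(\widetilde{\Gamma}_v\beta_v\widetilde{\Gamma}_v)=-\Tr_s(\beta_v)$ is well placed but the sign is correct: $\widetilde{\Gamma}_v$ swaps the graded pieces of the invariant subcomplex, so conjugation by it interchanges the two traces in \eqref{E:supertrace}.
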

\begin{proof}
In order to compare the elements $\rho_{\Gamma_{[0,\lambda]}} \in \Det\big(H^\bullet(M,E,H)\big)$ at different values of $v$. We choose a reference point, say $v=0$, and let $H^{(0)}$, $\rho^{(0)}_{\Gamma_{[0,\lambda]}}$ be the values of $H$, $\rho_{\Gamma_{[0,\lambda]}}$, respectively, at $v=0$. By \eqref{E:epsiso}, we have the isomorphism
\[
\Det(\varepsilon_B): \Det\big(H^\bullet(M,E,H^{(0)})\big) \to \Det\big(H^\bullet(M,E,H)\big).
\]
Since $\varepsilon_B=e^\beta$ on $\Omega^\bullet(M,E)$, we have, for $k=0,1$,
\[
\frac{d}{dv}(\Det(\varepsilon_B))^{-1}\rho_{\Gamma_{[0,\lambda]}}\, = \, - \sum_{k=0,1}(-1)^k \Tr (\beta \big|_{\Omega^{\bar k}_{[0,\lambda]}(M,E)})(\Det(\varepsilon_B))^{-1} \rho_{\Gamma_{[0,\lambda]}}.
\]  
The result follows. 
\end{proof} 

The argument of the following lemma is similar to the argument of Lemma 9.4 of \cite{BK3}. 
\begin{lemma}\label{L:flva0}
For any $v_1,v_2 \in (v_0-\delta, v_0 + \delta)$, we have
\[
\eta_\lambda(\nabla^H,v_1)-\eta_\lambda(\nabla^H,v_2) \equiv \eta(\B^{H}_{\bar{0}}(v_1)) - \eta(\B^{H}_{\bar{0}}(v_2)), \quad \operatorname{mod} \ \mathbb{Z}. 
\]
\end{lemma}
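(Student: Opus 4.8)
The plan is to follow the strategy of Lemma~9.4 of \cite{BK3}: reduce the statement to the finite-dimensional part $\Omega^{\bar 0}_{[0,\lambda]}(M,E)$ of the spectral decomposition and show that that part contributes only an integer to the difference. Since $(\B^H(v))^2$ commutes with $\B^H(v)$, the spectral projection $\Pi_{(\B^H(v))^2,[0,\lambda]}$ commutes with $\B^H(v)$, so $\B^H_{\bar 0}(v)$ preserves the decomposition $\Omega^{\bar 0}(M,E)=\Omega^{\bar 0}_{[0,\lambda]}(M,E)\oplus\Omega^{\bar 0}_{(\lambda,\infty)}(M,E)$, and the pseudodifferential projections $\Pi_>,\Pi_<$ entering the definition of the $\eta$-function respect this splitting as well. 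Hence the $\eta$-function and the multiplicities $m_\pm,m_0$ are additive over the two summands; moreover, for $\lambda$ chosen as in the setup of this subsection, every purely imaginary eigenvalue of $\B^H_{\bar 0}(v)$, as well as the eigenvalue $0$, lies in the $[0,\lambda]$-part (a purely imaginary eigenvalue $\mu$ would give an eigenvalue $\mu^2$ of $(\B^H)^2$ with $\re\mu^2<0$, which is excluded from the $(\lambda,\infty)$-part by hypothesis). Therefore Definition~\ref{D:etainv} yields the exact identity
\[
\eta(\B^H_{\bar 0}(v)) \;=\; \eta(\B^H_{\bar 0,[0,\lambda]}(v)) \;+\; \eta_\lambda(\nabla^H,v), \qquad v\in(v_0-\delta,v_0+\delta),
\]
so it suffices to prove that $\eta(\B^H_{\bar 0,[0,\lambda]}(v_1))-\eta(\B^H_{\bar 0,[0,\lambda]}(v_2))\in\mathbb{Z}$.

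For this I would use that $\B^H_{\bar 0,[0,\lambda]}(v)$ is an endomorphism of the finite-dimensional space $\Omega^{\bar 0}_{[0,\lambda]}(M,E)$, whose dimension $N$ is independent of $v\in(v_0-\delta,v_0+\delta)$: by the choice of $\lambda$ and $\delta$ there is no eigenvalue of $(\B^H(v))^2$ of absolute value $\lambda$ for any such $v$, so the rank of $\Pi_{(\B^H(v))^2,[0,\lambda]}$ on $\Omega^{\bar 0}(M,E)$ depends continuously, hence is constant, on the interval. For a finite-dimensional operator $D$ on an $N$-dimensional space, writing $D$ in Jordan form one computes directly from the definition of the $\eta$-function that $\eta_\theta(0,D)=n_+(D)-n_-(D)$, where $n_+(D)$ (resp. $n_-(D)$) is the number of eigenvalues of $D$, counted with algebraic multiplicity, with positive (resp. negative) real part. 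Combining this with Definition~\ref{D:etainv} and with $N=n_+(D)+n_-(D)+m_+(D)+m_-(D)+m_0(D)$ gives
\[
\eta(D)\;=\;\frac{N}{2}-\bigl(n_-(D)+m_-(D)\bigr),
\]
so $\eta(D)\equiv \tfrac{N}{2}\pmod{\mathbb{Z}}$. Applying this to $D=\B^H_{\bar 0,[0,\lambda]}(v_1)$ and $D=\B^H_{\bar 0,[0,\lambda]}(v_2)$, which have the same underlying dimension $N$, shows that their $\eta$-invariants differ by an integer; together with the splitting above this is exactly the assertion of the lemma.

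The argument contains no serious obstacle. The two points to check with some care are (i) the additivity $\eta(\B^H_{\bar 0}(v))=\eta(\B^H_{\bar 0,[0,\lambda]}(v))+\eta_\lambda(\nabla^H,v)$ as an \emph{equality} rather than merely a congruence, which rests on the spectral projections respecting the $\B^H_{\bar 0}(v)$-invariant splitting and on all imaginary-axis eigenvalues (and the zero eigenvalue) lying in the $[0,\lambda]$-part; and (ii) the elementary identity $\eta_\theta(0,D)=n_+(D)-n_-(D)$ in the finite-dimensional case. Both are routine, and the real content of the lemma is the observation that the finite-dimensional $\eta$-invariant is, modulo $\mathbb{Z}$, governed solely by the locally constant dimension $N$, exactly as in \cite[Subsection~9.3]{BK3}.
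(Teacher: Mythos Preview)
Your proposal is correct and follows precisely the approach the paper itself indicates: the paper gives no proof of its own but says the argument is ``similar to the argument of Lemma~9.4 of \cite{BK3}'', and what you have written is exactly that argument adapted to the present $\mathbb{Z}_2$-graded setting. The two checkpoints you flag---exact additivity of $\eta$ over the $[0,\lambda]/(\lambda,\infty)$ splitting (using that all imaginary-axis eigenvalues and the zero eigenvalue fall in the $[0,\lambda]$ part by the choice of $\lambda,\delta$) and the finite-dimensional identity $\eta(D)\equiv N/2\pmod{\mathbb{Z}}$---are handled correctly and are the same ingredients \cite{BK3} uses.
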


Again we need to study the dependence of $\eta(\B^{H}_{\bar{0}})$ on the parameter $v$. As Lemma \ref{L:rhopm}, this was essentially done in \cite{APS2} and \cite[P. 52]{Gi}.
\begin{lemma}\label{L:flva3}
The function $\eta(\B^{H}_{\bar{0}}(v))-\operatorname{rank}(E)\eta_{\trivial}(v)$ is, modulo $\mathbb{Z}$, independent of $v \in \mathbb{R}$.
\end{lemma}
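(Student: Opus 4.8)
The plan is to mimic the proof of the metric anomaly Lemma~\ref{L:rhopm}, with the flux parameter $v$ in place of the Riemannian metric. First I would invoke the local variation property of the $\eta$-invariant, cf. \cite{APS2} and \cite[p.~52]{Gi}: if $D_v$ is a smooth one-parameter family of first order elliptic differential operators on the closed manifold $M$ whose leading symbol is self-adjoint and $v$-independent, then $v\mapsto\eta(D_v)\bmod\mathbb{Z}$ is smooth (the integer jumps coming from eigenvalues crossing the imaginary axis or the origin are absorbed in the terms $m_{\pm},m_0$ of Definition~\ref{D:etainv}), and
\[
\frac{d}{dv}\bigl(\eta(D_v)\bmod\mathbb{Z}\bigr)=\tfrac12\,\frac{d}{dv}\,\eta_\theta(0,D_v)=\int_M\alpha_v,
\]
where $\alpha_v$ is a density on $M$ that is \emph{locally computable}: at each $x\in M$ it is a universal expression in the jets at $x$ of the total symbols of $D_v$ and of $\dot D_v$, traced over the fibre of $E$. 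I would apply this to $D_v=\B^H_{\bar 0}(v)$: its leading symbol is that of the de Rham odd signature operator $\Gamma d+d\Gamma$ tensored with $\operatorname{Id}_E$, hence self-adjoint, invertible off the zero section, and independent of $v$ (since $H\wedge\cdot$ is of order zero); moreover $\dot\B^H_{\bar 0}(v)=\Gamma(\dot H\wedge\cdot)+(\dot H\wedge\cdot)\Gamma=\Gamma[\beta,\nabla^H]+[\beta,\nabla^H]\Gamma$ with $\dot H=-dB$ is also of order zero, so the leading symbol stays fixed along the family and, by the choice of $\delta$ made above, the Agmon angle $\theta$ may be kept fixed on $(v_0-\delta,v_0+\delta)$. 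This gives $\frac{d}{dv}\bigl(\eta(\B^H_{\bar 0}(v))\bmod\mathbb{Z}\bigr)=\int_M\alpha^E_v$.

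The key step is then to identify $\alpha^E_v$ with $\operatorname{rank}(E)$ times the density $\alpha^{\trivial}_v$ obtained the same way from the trivial line bundle with the trivial connection and the same flux $H$. Because $\nabla$ is flat, every point $x\in M$ has a neighbourhood $U$ carrying a $\nabla$-parallel frame of $E|_U$; relative to such a frame, under the resulting identification $\Omega^\bullet(U,E)\cong\Omega^\bullet(U)\otimes\mathbb{C}^{\operatorname{rank}(E)}$ one has $\nabla^H|_U=\nabla^H_{\trivial}|_U\otimes\operatorname{Id}$ and $\Gamma|_U=\Gamma_{\trivial}|_U\otimes\operatorname{Id}$, hence $\B^H_{\bar 0}(v)|_U=\B^H_{\trivial}(v)|_U\otimes\operatorname{Id}$ and likewise for the $v$-derivatives. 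Since $\alpha_v$ depends only on the germ of these operators at $x$, and is invariantly defined (so the choice of parallel frame is irrelevant), this forces the pointwise identity $\alpha^E_v(x)=\operatorname{rank}(E)\cdot\alpha^{\trivial}_v(x)$ on all of $M$. Integrating,
\[
\frac{d}{dv}\bigl(\eta(\B^H_{\bar 0}(v))\bmod\mathbb{Z}\bigr)=\operatorname{rank}(E)\int_M\alpha^{\trivial}_v=\operatorname{rank}(E)\cdot\frac{d}{dv}\,\eta_{\trivial}(v),
\]
the last equality because $\eta_{\trivial}(v)=\tfrac12\,\eta_\theta(0,\B^H_{\trivial}(v))$ is smooth with derivative $\int_M\alpha^{\trivial}_v$. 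Hence $\frac{d}{dv}\bigl(\eta(\B^H_{\bar 0}(v))-\operatorname{rank}(E)\,\eta_{\trivial}(v)\bigr)\equiv 0\pmod{\mathbb{Z}}$; being smooth modulo $\mathbb{Z}$ with vanishing derivative on the connected interval $(v_0-\delta,v_0+\delta)$, the function is locally constant, and as $v_0\in\mathbb{R}$ was arbitrary it is constant modulo $\mathbb{Z}$ on all of $\mathbb{R}$.

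The step I expect to be the main obstacle is the first one: justifying the local variation formula for $\eta_\theta(0,\cdot)$ in the present \emph{non-self-adjoint} situation — only the leading symbol of $\B^H_{\bar 0}$ is self-adjoint — and checking that the integer jumps built into the definition of the $\eta$-invariant do not spoil smoothness modulo $\mathbb{Z}$; for this one uses the same extension of Gilkey's argument already employed in \cite[Sect.~9]{BK3}. Once that is in hand, the pointwise identity $\alpha^E_v=\operatorname{rank}(E)\,\alpha^{\trivial}_v$ is immediate from the locality of $\alpha_v$ together with the existence of local $\nabla$-parallel frames, exactly as in the metric-anomaly Lemma~\ref{L:rhopm}.
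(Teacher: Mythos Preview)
Your proposal is correct and is precisely the argument the paper has in mind: the paper gives no proof beyond the remark ``As Lemma~\ref{L:rhopm}, this was essentially done in \cite{APS2} and \cite[p.~52]{Gi},'' and your write-up spells out exactly that argument --- Gilkey's local variation formula for $\eta$ modulo $\mathbb{Z}$ applied to the family $\B^H_{\bar 0}(v)$ (whose leading symbol is $v$-independent since $H\wedge\cdot$ is of order zero), followed by the local trivialisation of $E$ via $\nabla$-parallel frames to pull out the factor $\operatorname{rank}(E)$. Your flagged obstacle, the non-self-adjointness of $\B^H_{\bar 0}$, is indeed the only delicate point and is handled exactly as you say, by the extension in \cite[Section~9]{BK3} of Gilkey's argument to operators with merely self-adjoint leading symbol.
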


Now we have the main theorem of this subsection.
\begin{theorem}
Let $M$ be an odd dimensional oriented closed Riemannian manifold. Let $(E,\nabla,h^E)$ be a flat complex vector bundle over $M$. Suppose that $H$ and $H'$ are closed differential forms on $M$ of odd degrees representing the same deRham cohomology class, and let $B$ be an even form so that $H'=H-dB$. Then the refined analytic torsion $\rho_{\operatorname{an}}(\nabla^{H'})=\Det (\varepsilon_B)(\rho_{\operatorname{an}}(\nabla^{H}))$.
\end{theorem}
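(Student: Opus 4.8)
The plan is to mimic, almost verbatim, the argument used to remove the metric anomaly (the theorem just before Definition~\ref{D:maindef}), replacing the deformation of the metric $g^M_t$ by a deformation of the flux $H$. Since $\rho_{\operatorname{an}}(\nabla^H)$ is already known to be independent of $g^M$, I fix a Riemannian metric once and for all. I then join $H$ to $H'$ by the affine path $H_v := H - v\,dB$, $v\in[0,1]$; it stays in the de Rham class $[H]\in H^{\bar 1}(M,\mathbb{R})$, satisfies $\tfrac{d}{dv}H_v=-dB$, and has $H_0=H$, $H_1=H'$. Writing $\beta:=B\wedge\cdot$ and using the cumulative intertwiner $\varepsilon_{vB}=e^{vB\wedge\cdot}$ of \eqref{E:epsiso}, with induced isomorphism $\Det(\varepsilon_{vB})$ on determinant lines, the theorem is equivalent to the constancy in $v$ of
\[
\sigma(v) \;:=\; \Det(\varepsilon_{vB})^{-1}\,\rho_{\operatorname{an}}(\nabla^{H_v})\;\in\;\Det\big(H^\bullet(M,E,H)\big);
\]
evaluating at $v=0$ and $v=1$ then yields $\rho_{\operatorname{an}}(\nabla^H)=\Det(\varepsilon_B)^{-1}\rho_{\operatorname{an}}(\nabla^{H'})$, which is the assertion.

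For the constancy, I fix $v_0\in[0,1]$ and choose $\lambda>0$ and $\delta>0$ exactly as in the paragraph preceding Lemma~\ref{L:flva1}: on $(v_0-\delta,v_0+\delta)$ the number $\lambda$ avoids the spectrum of $(\B^{H_v})^2$, the operator $(\B^{H_v}_{(\lambda,\infty)})^2$ has spectrum with positive real part, and the integers $d^-_{\bar k,\lambda}$ of \eqref{E:dno} are constant. Picking an Agmon angle $\theta_0\in(-\pi/2,0)$ as in \eqref{E:ldetetarell}, formulas \eqref{E:rhoh} and \eqref{E:ldetetarell} together with Definition~\ref{D:maindef} give, on that interval, the expression
\[
\sigma(v)=\pm\,e^{\xi_\lambda(v)}\,e^{-i\pi\eta_\lambda(\nabla^{H_v})}\,e^{\frac{i\pi}{2}\sum_{k=0,1}(-1)^k d^-_{\bar k,\lambda}}\,e^{i\pi(\operatorname{rank}E)\eta_{\trivial}(v)}\,\Det(\varepsilon_{vB})^{-1}\rho_{\Gamma_{[0,\lambda]}}(v),
\]
the analogue of \eqref{E:pmfor}. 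I then kill the factors one by one. Lemma~\ref{L:flva1} and Lemma~\ref{L:flva2} combine to show that $e^{\xi_\lambda(v)}\,\Det(\varepsilon_{vB})^{-1}\rho_{\Gamma_{[0,\lambda]}}(v)$ has vanishing $v$-derivative, hence is constant; the $d^-_{\bar k,\lambda}$ factor is constant by construction; and Lemma~\ref{L:flva0} together with Lemma~\ref{L:flva3} shows that $\eta_\lambda(\nabla^{H_v})-(\operatorname{rank}E)\eta_{\trivial}(v)$ is constant modulo $\mathbb{Z}$, so $e^{-i\pi\eta_\lambda(\nabla^{H_v})}e^{i\pi(\operatorname{rank}E)\eta_{\trivial}(v)}$ is constant up to a sign. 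Thus $\sigma(v)$ equals a fixed element of $\Det(H^\bullet(M,E,H))$ times $\pm1$ on $(v_0-\delta,v_0+\delta)$; since $v\mapsto\sigma(v)$ is continuous and nowhere zero, the sign is locally constant, so $\sigma$ is locally constant, and connectedness of $[0,1]$ finishes the argument.

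I expect the main difficulty to be the same one that appears in the metric-anomaly theorem: each of the pieces $\xi_\lambda$, $\rho_{\Gamma_{[0,\lambda]}}$, $\eta_\lambda$, $\eta_{\trivial}$ is controlled only up to sign or $\pi i$-ambiguities (coming from the congruences modulo $\mathbb{Z}$ in Lemmas~\ref{L:flva0} and \ref{L:flva3} and from the $\pm$ in the $\eta$-invariant identity \eqref{E:ldetetarell}), so the cancellations must be organized so that the genuine $v$-dependence drops out exactly and only a sign survives, which is then eliminated by continuity. A secondary point needing care is that $\eta_{\trivial}$ itself depends on the flux $H_v$ through $\nabla^{H_v}_{\trivial}$, so it truly varies with $v$ and must be tracked together with $\eta_\lambda(\nabla^{H_v})$; this is precisely what Lemma~\ref{L:flva3} supplies.
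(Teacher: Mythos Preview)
Your proposal is correct and follows essentially the same approach as the paper: deform $H$ along the affine path, pull back via $\Det(\varepsilon_{vB})^{-1}$, use Lemmas~\ref{L:flva1}--\ref{L:flva3} to show the result is locally constant up to sign, and remove the sign by continuity. In fact you are more explicit than the paper, which omits the reference to Lemma~\ref{L:flva0} (needed to pass from $\eta_\lambda$ to $\eta(\B^{H}_{\bar 0})$ before invoking Lemma~\ref{L:flva3}); your inclusion of it is correct and necessary.
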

\begin{proof}
Again we choose a reference point, say $v=0$, and let $H^{(0)}$, $\rho^{(0)}_{\Gamma_{[0,\lambda]}}$ be the values of $H$, $\rho_{\Gamma_{[0,\lambda]}}$, respectively, at $v=0$. By \eqref{E:epsiso}, we have the isomorphism
\[
\Det(\varepsilon_B): \Det\big(H^\bullet(M,E,H^{(0)})\big) \to \Det\big(H^\bullet(M,E,H)\big).
\]
Recall that $\varepsilon_B=e^\beta$ on $\Omega^\bullet(M,E)$. By combining Lemma \ref{L:flva1}, Lemma \ref{L:flva2} with Lemma \ref{L:flva3}, we conclude that $(\Det(\varepsilon_B))^{-1}\rho_{\operatorname{an}}(\nabla^{H})$ is, up to sign, invariant along the deformation. Since the function $(\Det(\varepsilon_B))^{-1}\rho_{\operatorname{an}}(\nabla^{H})$ is continuous and nonzero, the sign in the right hand side of the equality must be positive. This proves the statement.
\end{proof}

As pointed out by Mathai and Wu, \cite{MW}, that when $H$ is a $3$-form on $M$, the deformation of the Riemannian metric $g^M$ and that of the flux $H$ within its cohomology class can be interpreted as a deformation of generalized metrics on $M$ and the analytic torsion should be defined for generalized metrics so that the deformations of $g^M$ and $H$ are unified. Similarly, the the refined analytic torsion should be also defined for generalized metrics so that the deformations of $g^M$ and $H$ are unified. 

\section{A duality theorem for the twisted refined analytic torsion}\label{S:dttrat}


In this section we first review the concept of the dual of a complex and construct a natural isomorphism between the determinant lines of a $\mathbb{Z}_2$-graded complex and its dual. We then show that this isomorphism is compatible with the canonical isomorphism \eqref{E:isomorphism}. Finally we establish a relationship between the twisted refined analytic torsion corresponding to a flat connection and that of its dual. The contents of this section are $\mathbb{Z}_2$-graded analogues of Sections 3 and 10 of \cite{BK3}. Throughout this section, $\bf{k}$ is a field of characteristic zero endowed with an involutive automorphism $\tau:\bf{k} \to \bf{k}.$
The main examples are $\bf{k}=\mathbb{C}$ with $\tau$ being the complex conjugation and $\bf{k}=\mathbb{R}$ with $\tau$ being the identity map.

\subsection{The $\mathbb{Z}_2$-graded $\tau$-dual space}\label{SS:duakcplx}
If $V,W$ are $\bf{k}$-vector spaces, a map $f:V \to W$ is said to be $\tau-linear$ if 
\[
f(x_1v_1+x_2v_2)=\tau(x_1)v_1+\tau(x_2)v_2, \, \text{for any} \, v_1,v_2 \in V, x_1,x_2 \in \bf{k}.
\]
The linear space $V^*=V^{*_\tau}$ of all $\tau$-linear maps $V \to \bf{k}$ is called the $\tau-dual \ space \  to \ V$. There are natural $\tau$-linear isomorphisms, cf. \cite[Subsection 3.1]{BK3}, 
\begin{equation}\label{E:albev}
\alpha_V:\Det(V^*) \rightarrow \Det(V)^{-1}, \quad \beta_V:\Det(V) \rightarrow \Det(V^*)^{-1}.
\end{equation}
Then for any $v \in \Det(V)$, we have, cf. \cite[(3-8)]{BK3},
\begin{equation}\label{E:alinvb}
\big( \alpha_V^{-1}(v^{-1}) \big)^{-1} = (-1)^{\dim V} \beta_V(v),
\end{equation}
Let $V$ and $W$ be $\bf{k}$-vector spaces, then, for any $v \in \Det(V),w \in \Det(W)$, we have, cf. \cite[(3-9)]{BK3}, 
\begin{equation}\label{E:dualmu}
\big( \mu_{V,W}(v \otimes w)\big)^{-1}= \alpha_{V \oplus W} \circ \mu_{V^*,W^*}\big( \alpha_V^{-1}(v^{-1}) \otimes \alpha_W^{-1}(w^{-1}) \big).
\end{equation}

Let $T:V \to W$ be a $\bf{k}$-linear map. The $\tau-adjoint \ of \ T$ is the linear map
\[
T^*:W^* \rightarrow V^*
\]
such that 
\[
(T^*w^*)(v) = w^*(Tv), \ \text{for all} \ v \in V, w^* \in W^*. 
\]
If $T$ is bijective, then, for any nonzero $v \in \Det(V)$, we have, cf. \cite[(3-11)]{BK3},
\begin{equation}\label{E:tastal}
T^*\alpha_W^{-1}\big( (Tv)^{-1} \big) = \alpha_V^{-1}(v^{-1}).
\end{equation}

Let $V^0, V^1, \cdots, V^m$ be finite dimensional $\bf{k}$-vector space, where $m=2r-1$ is an odd integer. Denote by $V^{\bar 0}=\bigoplus_{i=0}^{r-1} V^{2i}$ and $V^{\bar 1}=\bigoplus_{i=0}^{r-1} V^{2i+1}$. Let $V^{\bullet} = V^{\bar 0} \oplus V^{\bar 1}$ be a finite dimensional $\mathbb{Z}_2$-graded $\bf{k}$-vector space. We define the  $\mathbb{Z}_2-graded \ (\tau-) \ dual \ space\ \widehat{V} = \widehat{V}^{\bar 0} \oplus \widehat{V}^{\bar 1}$ by
\[
\widehat{V}^{\bar k} := (V^{\overline{k+1}})^*, \quad k=0,1.
\]
Then \eqref{E:albev} induces a $\tau$-linear isomorphism
\begin{equation}\label{E:alvhatv}
\alpha_{V^\bullet}:\Det(V^\bullet) \rightarrow \Det(\widehat{V}^\bullet),
\end{equation}
defined by
\begin{equation}\label{E:allinve1}
\alpha_{V^\bullet}(v_{\bar 0} \otimes (v_{\bar 1})^{-1}) = (-1)^{\mathcal{M}(V^\bullet)} \cdot \alpha^{-1}_{V^{\bar 1}}(v_{\bar 1}^{-1}) \otimes \alpha_{V^{\bar 0}}(v_{\bar 0}),
\end{equation}
where $v_{\bar k} \in \Det(V^{\bar k}), k=0,1$ and, cf. \eqref{E:mvw123}, 
\[
\mathcal{M}(V^\bullet) = \mathcal{M}(V^\bullet, V^\bullet)= \dim V^{\bar 0} \cdot \dim V^{\bar 1}.
\]

\subsection{The dual complex of a $\mathbb{Z}_2$-graded complex}
Consider the $\mathbb{Z}_2$-graded complex \eqref{E:detline} of finite dimensional $\bf{k}$-vector spaces. The dual complex of the $\mathbb{Z}_2$-graded complex \eqref{E:detline} is the complex
\begin{equation}\label{E:detline1} 
 (\widehat{C}^\bullet,d^*) \  : \  \cdots \stackrel{d^*}{\longrightarrow} \ \widehat{C}^{\bar{0}} \ \stackrel{d^*}{\longrightarrow}\ \widehat{C}^{\bar{1}}\ \stackrel{d^*}{\longrightarrow} \ \widehat{C}^{\bar{0}} \ \stackrel{d^*}{\longrightarrow} \cdots,
\end{equation}
where $\widehat{C}^{\bar k}=(C^{\overline{k+1}})^*$ and $d^*$ is the $\tau$-adjoint of $d$. Then the cohomology $H^{\bar k}(d^*)$ of $\widehat{C}^\bullet$ is natural isomorphic to the $\tau$-dual space to $H^{\overline{k+1}}(d) \, (k=0,1)$. Hence by \eqref{E:alvhatv}, we obtain $\tau$-linear isomorphisms
\begin{equation}\label{E:alphah0}
\begin{array}{l}
\alpha_{C^\bullet}:  \Det(C^\bullet) \rightarrow \Det(\widehat{C}^\bullet),
\\
\alpha_{H^\bullet(d)}:  \Det(H^\bullet(d)) \rightarrow \Det(H^\bullet(d^*)).
\end{array}
\end{equation}

The following lemma is the $\mathbb{Z}_2$-graded analogue of Lemma 3.6 of \cite{BK3}. The proof is similar to the proof of Lemma 3.6 of \cite{BK3}. We skip the proof.

\begin{lemma}\label{L:comm22}
Let $(C^\bullet,d)$ be a $\mathbb{Z}_2$-graded complex of finite dimensional $\bf{k}$-vector spaces, defined as \eqref{E:detline}. Further, assume that the Euler characteristics $\chi(C^\bullet)=\chi(\widehat{C}^\bullet)=0$. Then the following diagramm commutes:
\begin{equation}\label{E:comm2}
\begin{CD}
\Det(C^\bullet)     & @>\phi_{C^\bullet} >> & \Det(H^\bullet(d))     \\
  @V \alpha_{C^\bullet} VV &        & @VV  \alpha_{H^\bullet(d)} V   \\
  \Det(\widehat{C}^\bullet)     & @> \phi_{\widehat{C}^\bullet} >> & \Det\big( H^\bullet(d^*) \big) 
\end{CD}
\end{equation}
where $\phi_{C^\bullet}$ and $\phi_{\widehat{C}^\bullet}$ are defined as in \eqref{E:phic}. 
\end{lemma}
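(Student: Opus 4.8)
The plan is to adapt the proof of Lemma 3.6 of \cite{BK3} to the $\mathbb{Z}_2$-graded setting, using the algebraic identities \eqref{E:alinvb}, \eqref{E:dualmu} and \eqref{E:tastal} recorded in Subsection~\ref{SS:duakcplx}. Fix, for $k=0,1$, a direct sum decomposition $C^{\bar k}=B^{\bar k}\oplus H^{\bar k}\oplus A^{\bar k}$ as in \eqref{E:decompbha}. Dualizing, one obtains an induced decomposition of $\widehat{C}^{\bar k}=(C^{\overline{k+1}})^*$ into $(B^{\overline{k+1}})^*\oplus (H^{\overline{k+1}})^*\oplus (A^{\overline{k+1}})^*$. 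The first observation to check is that this is an admissible decomposition for the dual complex $(\widehat C^\bullet,d^*)$ in the sense of \eqref{E:decompbha}: namely that $d^*$ vanishes on $(B^{\overline{k+1}})^*\oplus(H^{\overline{k+1}})^*$, that it maps $(A^{\overline{k+1}})^*$ isomorphically onto $(B^{\overline{k}})^*$ (this is just the $\tau$-adjoint of $d_{\overline k}\colon A^{\overline k}\to B^{\overline{k+1}}$ being bijective), and that $(H^{\overline{k+1}})^*$ represents $H^{\bar k}(d^*)$. This is where the shift $\widehat C^{\bar k}=(C^{\overline{k+1}})^*$ and the identity $\dim A^{\overline{k+1}}=\dim B^{\bar k}$ enter.

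Next I would trace an element $c_{\bar 0}\otimes c_{\bar 1}^{-1}\in\Det(C^\bullet)$ through both routes of the square \eqref{E:comm2}. Going right then down: apply $\phi_{C^\bullet}$ using \eqref{E:cbahce} and \eqref{E:phic} to get $(-1)^{\mathcal N(C^\bullet)}h_{\bar 0}\otimes h_{\bar 1}^{-1}$, then apply $\alpha_{H^\bullet(d)}$ via \eqref{E:allinve1}. Going down then right: apply $\alpha_{C^\bullet}$ via \eqref{E:allinve1} to land in $\Det(\widehat C^\bullet)$, then apply $\phi_{\widehat C^\bullet}$; here one must express $\alpha_{C^\bullet}(c_{\bar 0}\otimes c_{\bar 1}^{-1})$ in the decomposition-adapted form \eqref{E:cbahce} for $\widehat C^\bullet$, which is exactly where \eqref{E:dualmu} is used to push $\alpha$ through the fusion isomorphisms $\mu_{B,H,A}$, and \eqref{E:tastal} is used to identify $d^*$ of the dualized $a$-element with the dual of $d(a)$. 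Both routes then produce an element of $\Det(H^\bullet(d^*))=\Det((H^{\bar 1}(d))^*)\otimes\Det((H^{\bar 0}(d))^*)^{-1}$ equal to a universal sign times $\alpha_{H^{\bar 1}}^{-1}(h_{\bar 1}^{-1})\otimes\alpha_{H^{\bar 0}}(h_{\bar 0})$; the content of the lemma is that these two signs agree.

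So the proof reduces to a modular-arithmetic identity among the exponents. Collecting contributions: route one yields $\mathcal N(C^\bullet)+\mathcal M(H^\bullet)$ (the latter from \eqref{E:allinve1}) plus a sign from \eqref{E:alinvb} applied to $h_{\bar 0}$; route two yields $\mathcal M(C^\bullet)$ plus $\mathcal N(\widehat C^\bullet)$ plus signs coming from each application of \eqref{E:dualmu} (one factor $(-1)^{\dim B^{\bar k}\cdot(\dim H^{\bar k}+\dim A^{\bar k})}$ and similar, applied to the triple fusion $\mu_{B^{\bar k},H^{\bar k},A^{\bar k}}$, for each $k$) and from \eqref{E:alinvb}/\eqref{E:tastal}. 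Using $\dim \widehat C^{\bar k}=\dim C^{\overline{k+1}}$, the hypothesis $\chi(C^\bullet)=\chi(\widehat C^\bullet)=0$, i.e. $\sum_k\dim H^{\bar k}\equiv 0\ (\operatorname{mod}2)$, the relations \eqref{E:cbha1} of the form $\dim C^{\bar k}=\dim A^{\bar k}+\dim A^{\overline{k+1}}+\dim H^{\bar k}$, and the elementary fact $x(x+1)\equiv 0\ (\operatorname{mod}2)$, one checks the two exponents coincide $\operatorname{mod}2$. The main obstacle is bookkeeping: getting every sign from the three auxiliary lemmas \eqref{E:alinvb}, \eqref{E:dualmu}, \eqref{E:tastal} in the right place with the correct dimensions (in particular remembering the degree shift built into $\widehat C^{\bar k}$ and into $\mathcal N$'s asymmetric formula \eqref{E:ncbullet}), and then carrying the parity computation through cleanly — there is no conceptual difficulty beyond faithfully transcribing the $\mathbb Z$-graded argument of \cite[Lemma 3.6]{BK3} with the even/odd grading in place of the integer grading, which is why the authors are content to omit it.
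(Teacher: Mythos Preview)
Your approach is exactly what the paper indicates (it simply cites \cite[Lemma~3.6]{BK3} and omits the proof), and the reduction to a parity identity using $\chi(C^\bullet)=0$ is the right endgame. There is, however, one bookkeeping slip worth flagging before you carry out the sign count: in the dual decomposition
\[
\widehat C^{\bar k}=(C^{\overline{k+1}})^*=(B^{\overline{k+1}})^*\oplus(H^{\overline{k+1}})^*\oplus(A^{\overline{k+1}})^*,
\]
the roles of the $B$- and $A$-summands are swapped relative to what you wrote. The differential $d^*$ on $\widehat C^{\bar k}$ is the adjoint $d_{\bar k}^*:(C^{\overline{k+1}})^*\to(C^{\bar k})^*$; its kernel is the annihilator of $\operatorname{im}d_{\bar k}=B^{\overline{k+1}}$, namely $(H^{\overline{k+1}})^*\oplus(A^{\overline{k+1}})^*$, and it maps $(B^{\overline{k+1}})^*$ isomorphically onto $(A^{\bar k})^*$. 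Thus $\widehat B^{\bar k}=(A^{\overline{k+1}})^*$ and $\widehat A^{\bar k}=(B^{\overline{k+1}})^*$, not the other way around. Your own parenthetical remark already points to this: the adjoint of the isomorphism $d_{\overline k}\colon A^{\overline k}\to B^{\overline{k+1}}$ is an isomorphism $(B^{\overline{k+1}})^*\to(A^{\overline k})^*$, confirming that $(B^{\overline{k+1}})^*$ is the ``$A$-part'' of $\widehat C^{\bar k}$. With this correction in place the rest of your outline (pushing $\alpha$ through the fusion isomorphisms via \eqref{E:dualmu}, matching the $d^*$-images via \eqref{E:tastal}, and comparing the accumulated signs $\mathcal N(C^\bullet)$, $\mathcal N(\widehat C^\bullet)$, $\mathcal M(C^\bullet)$, $\mathcal M(H^\bullet)$ modulo~$2$) goes through.
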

\subsection{The refined torsion of the $\mathbb{Z}_2$-graded dual complex}
Suppose now that $\bf{k}$ is endowed with an involutive endomorphism $\tau$. Let $\widehat{C}^\bullet$ be the $\tau$-dual complex of $C$ and let $\alpha_{C^\bullet}: \Det(C^\bullet) \to \Det(\widehat{C}^\bullet)$ denote the $\tau$-isomorphism defined in \eqref{E:alphah0}. Let $\Gamma^*$ be the $\tau$-adjoint of $\Gamma$. Then $\Gamma^*$ is a chirality operator for the complex $\widehat{C}^\bullet$.

The following lemma is the $\mathbb{Z}_2$-graded analogue of Lemma 4.11 of \cite{BK3}.
\begin{lemma}\label{L:rhostarrho}
In the situation described above,
\begin{equation}\label{E:lega00}
\rho_{\Gamma^*} = \alpha_{H^\bullet(d)}(\rho_\Gamma).
\end{equation}
\end{lemma}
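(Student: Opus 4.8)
The plan is to reduce the statement to the level of the chosen elements $c_\Gamma$, exactly as was done in the proof of the direct-sum compatibility lemma. Recall that $\rho_\Gamma = \phi_{C^\bullet}(c_\Gamma)$ and $\rho_{\Gamma^*} = \phi_{\widehat C^\bullet}(c_{\Gamma^*})$, where $c_{\Gamma^*}$ is built from a chosen nonzero element of $\Det(\widehat C^{\bar 0}) = \Det((C^{\bar 1})^*)$. By Lemma~\ref{L:comm22} the square relating $\phi_{C^\bullet}$, $\phi_{\widehat C^\bullet}$, $\alpha_{C^\bullet}$ and $\alpha_{H^\bullet(d)}$ commutes (the Euler characteristic hypotheses are automatic here since $\Gamma$, resp.\ $\Gamma^*$, identifies $C^{\bar 0}$ with $C^{\bar 1}$, resp.\ $\widehat C^{\bar 0}$ with $\widehat C^{\bar 1}$, forcing $\chi = 0$). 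Hence
\[
\alpha_{H^\bullet(d)}(\rho_\Gamma) = \alpha_{H^\bullet(d)}\big(\phi_{C^\bullet}(c_\Gamma)\big) = \phi_{\widehat C^\bullet}\big(\alpha_{C^\bullet}(c_\Gamma)\big),
\]
so \eqref{E:lega00} will follow once I show $\alpha_{C^\bullet}(c_\Gamma) = c_{\Gamma^*}$ as elements of $\Det(\widehat C^\bullet)$, with the correct sign.

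The heart of the computation is therefore: pick a nonzero $c_{\bar 0} \in \Det(C^{\bar 0})$, so $c_\Gamma = (-1)^{\mathcal R(C^\bullet)} c_{\bar 0} \otimes (\Gamma c_{\bar 0})^{-1}$ with $\Gamma c_{\bar 0} \in \Det(C^{\bar 1})$. Apply $\alpha_{C^\bullet}$ using formula \eqref{E:allinve1}: this produces $(-1)^{\mathcal M(C^\bullet) + \mathcal R(C^\bullet)} \, \alpha^{-1}_{C^{\bar 1}}\big((\Gamma c_{\bar 0})^{-1}\big)^{-1}$... — more precisely $\alpha_{C^\bullet}(c_{\bar 0}\otimes(\Gamma c_{\bar 0})^{-1}) = (-1)^{\mathcal M(C^\bullet)} \alpha^{-1}_{C^{\bar 1}}((\Gamma c_{\bar 0})^{-1}) \otimes \alpha_{C^{\bar 0}}(c_{\bar 0})$. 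Now set $\widehat c_{\bar 0} := \alpha^{-1}_{C^{\bar 1}}\big((\Gamma c_{\bar 0})^{-1}\big) \in \Det(\widehat C^{\bar 0})$; this is the natural choice of nonzero generator with which to form $c_{\Gamma^*}$. Then $c_{\Gamma^*} = (-1)^{\mathcal R(\widehat C^\bullet)} \widehat c_{\bar 0} \otimes (\Gamma^* \widehat c_{\bar 0})^{-1}$, and the task becomes to identify $\alpha_{C^{\bar 0}}(c_{\bar 0})^{-1}$ with $\Gamma^* \widehat c_{\bar 0}$ up to sign. For this I would use \eqref{E:tastal} with $T = \Gamma : C^{\bar 0} \to C^{\bar 1}$ (bijective since $\Gamma$ is an involution exchanging the two summands), which gives $\Gamma^* \alpha^{-1}_{C^{\bar 1}}\big((\Gamma c_{\bar 0})^{-1}\big) = \alpha^{-1}_{C^{\bar 0}}(c_{\bar 0}^{-1})$, i.e.\ $\Gamma^* \widehat c_{\bar 0} = \alpha^{-1}_{C^{\bar 0}}(c_{\bar 0}^{-1})$; then \eqref{E:alinvb} converts $\alpha^{-1}_{C^{\bar 0}}(c_{\bar 0}^{-1})$ into $\big((-1)^{\dim C^{\bar 0}}\beta_{C^{\bar 0}}(c_{\bar 0})\big)^{-1}$, and matching this against $\alpha_{C^{\bar 0}}(c_{\bar 0})^{-1}$ — one must be careful whether $\alpha$ or $\beta$ appears — is where the bookkeeping lives.

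Collecting the exponents, $\alpha_{C^\bullet}(c_\Gamma)$ equals $(-1)^{\epsilon} c_{\Gamma^*}$ where $\epsilon$ is an explicit combination of $\mathcal R(C^\bullet)$, $\mathcal R(\widehat C^\bullet)$, $\mathcal M(C^\bullet)$, $\dim C^{\bar 0}$, and $\dim C^{\bar 1}$. Using $\dim C^{\bar 0} = \dim C^{\bar 1} =: n$ (from the chirality operator), one has $\dim \widehat C^{\bar k} = n$ as well, so $\mathcal R(C^\bullet) = \mathcal R(\widehat C^\bullet) = \tfrac12 n(n+1)$ and $\mathcal M(C^\bullet) = n^2$; the remaining additive terms collapse mod $2$ to $n^2 + n = n(n+1) \equiv 0$. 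Thus $\epsilon \equiv 0 \pmod 2$ and $\alpha_{C^\bullet}(c_\Gamma) = c_{\Gamma^*}$, completing the proof. The main obstacle I anticipate is purely the sign accounting: one must thread \eqref{E:allinve1}, \eqref{E:tastal}, and \eqref{E:alinvb} in the right order and keep track of which of $\alpha$, $\alpha^{-1}$, $\beta$ appears at each stage, since a stray $(-1)^n$ would break the final cancellation; once the identity $\alpha_{C^\bullet}(c_\Gamma) = c_{\Gamma^*}$ is in hand, Lemma~\ref{L:comm22} does the rest mechanically.
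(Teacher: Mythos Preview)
Your proposal is correct and follows essentially the same approach as the paper's proof: both reduce via Lemma~\ref{L:comm22} to showing $\alpha_{C^\bullet}(c_\Gamma)=c_{\Gamma^*}$, make the same choice $\widehat c_{\bar 0}=\alpha^{-1}_{C^{\bar 1}}((\Gamma c_{\bar 0})^{-1})$, apply \eqref{E:tastal} with $T=\Gamma$ and then \eqref{E:alinvb}, and cancel the resulting sign $\mathcal M(C^\bullet)+\dim C^{\bar 0}=n^2+n\equiv 0\pmod 2$. Your caution about whether $\alpha$ or $\beta$ appears in the second tensor factor of \eqref{E:allinve1} is well placed---the map there should indeed be read as $\beta_{V^{\bar 0}}$ (landing in $\Det((V^{\bar 0})^*)^{-1}=\Det(\widehat V^{\bar 1})^{-1}$), which is exactly what makes the $(-1)^{\dim C^{\bar 0}}$ from \eqref{E:alinvb} enter and then cancel.
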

\begin{proof}
Fix $c_{\bar 0} \in \Det(C^{\bar 0})$ and set 
\begin{equation}\label{E:lega1}
\widehat{c}_{\bar 0} = \alpha^{-1}_{C^{\bar 0}}\big( (\Gamma c_{\bar 0})^{-1} \big) \in \Det(\widehat{C}^{\bar 0}).
\end{equation}
Then, by \eqref{E:tastal}, 
\begin{equation}\label{E:lega2}
\Gamma^* \widehat{c}_{\bar 0} = \alpha^{-1}_{C^{\bar 0}}(c_{\bar 0}^{-1}) \in \Det(\widehat{C}^{\bar 0}).
\end{equation}
Using \eqref{E:alinvb}, we obtain from \eqref{E:lega1} and \eqref{E:lega2}, that
\begin{equation}\label{E:lega3}
\beta_{C^{\bar 0}}(c_{\bar 0}) = (-1)^{\dim C^{\bar 0}} \cdot (\Gamma^* \widehat{c}_{\bar 0})^{-1}, \quad \beta_{C^{\bar 1}}(\Gamma c_{\bar 0}) = (-1)^{\dim C^{\bar 0}} \cdot \widehat{c}_{\bar 0}^{-1}. 
\end{equation}
Combining \eqref{E:allinve1}, \eqref{E:elec}, \eqref{E:lega1}, \eqref{E:lega2} and \eqref{E:lega3}, we have
\begin{equation}\label{E:lega4}
\alpha_{C^\bullet}(c_\Gamma) = (-1)^{\mathcal{M}(C^\bullet)+\dim C^{\bar 0}} \cdot c_{\Gamma^*}.
\end{equation}
By \eqref{E:elerho}, $\rho_\Gamma= \phi_{C^\bullet}(c_\Gamma)$. Therefore, from Lemma \ref{L:comm22}, we obtain
\begin{equation}\label{E:lega5}
\alpha_{H^\bullet(d)}(\rho_\Gamma) = \phi_{\widehat{C}^\bullet} \circ \alpha_{C^\bullet}(c_\Gamma) = (-1)^{\mathcal{M}(C^\bullet)+\dim C^{\bar 0}} \cdot \rho_{\Gamma^*}.
\end{equation}
By \eqref{E:mvw123} and the fact $\dim C^{\bar 0} = \dim C^{\bar 1}$, we get
\begin{equation}\label{E:lega6}
\mathcal{M}(C^\bullet)+\dim C^{\bar 0} = \dim C^{\bar 1} \cdot \dim C^{\bar 0} + \dim C^{\bar 0} = \dim C^{\bar 0} \cdot (\dim C^{\bar 0} + 1) \equiv 0, \quad \text{mod} \ 2. 
\end{equation}
Combining \eqref{E:lega5} with \eqref{E:lega6}, we obtain \eqref{E:lega00}.
\end{proof}

\subsection{The duality theorem}\label{SS:duaconne1}
Suppose that $M$ is a closed oriented manifold of odd dimension $m=2r-1$. Let $E \to M$ be a complex vector bundle over $M$ and let $\nabla$ be a flat connection on $E$. Fix a Hermitian metric $h^E$ on $E$. Denote by $\nabla'$ the connection on  $E$ dual to the connection $\nabla$, cf. \cite[Subsection 10.1]{BK3}. We denote by $E'$ the flat bundle $(E,\nabla')$, referring to $E'$ as the dual of the flat vector bundle $E$. Using the construction of Section \ref{SS:duakcplx}, with $\tau: \mathbb{C} \to \mathbb{C}$ be the complex conjugation, we have the canonical anti-linear isomorphism 
\begin{equation}\label{E:antiiso}
\alpha : \Det \big( H^\bullet(M,E,H) \big) \rightarrow \Det \big( H^\bullet(M,E',H) \big).
\end{equation}
  
The following theorem is the main result of this section and is the twisted analogue of Theorem 10.3 of \cite{BK3}.
\begin{theorem}\label{T:duality0}
Let $E \to M$ be a complex vector bundle over a closed oriented odd dimensional manifold $M$ and let $\nabla$ be a flat connection on $E$. Denote by $\nabla'$ the connection dual to $\nabla$ with respect to a Hermitian metric $h^E$ on $E$ and let $H$ be a odd-degree cloed form, other than one form, on $M$. Then 
\begin{equation}\label{E:aloh15}
\alpha(\rho_{\an}(\nabla^H)) = \rho_{\an}(\nabla'^H)\cdot e^{2 \pi i \big( \bar{\eta}(\nabla^H,g^M)-(\operatorname{rank}E)\eta_{\trivial}(g^M) \big)},
\end{equation}
where $\alpha$ is the anti-linear isomorphism \eqref{E:antiiso} and $g^M$ is any Riemannian metric on $M$.
\end{theorem}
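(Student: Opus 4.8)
The plan is to reduce the duality statement \eqref{E:aloh15} to the finite-dimensional duality result of Lemma \ref{L:rhostarrho} together with the relationship between the graded determinant and the $\eta$-invariant established in the proposition around \eqref{E:ldetetarell}. First I would fix a Riemannian metric $g^M$, choose $\lambda \ge 0$ so that $\lambda$ is not the absolute value of an eigenvalue of $(\B^H)^2$, and work with the splitting $\Omega^\bullet(M,E)=\Omega^\bullet_{[0,\lambda]}(M,E)\oplus\Omega^\bullet_{(\lambda,\infty)}(M,E)$. The key algebraic observation is that the chirality operator $\Gamma=\Gamma(g^M)$ for $E$ and the chirality operator $\Gamma'$ for the dual bundle $E'$ are related by the $\tau$-adjoint (with $\tau$ the complex conjugation): under the natural pairing $\Omega^\bullet(M,E)\otimes\Omega^\bullet(M,E')\to\Omega^\bullet(M)\to\mathbb{C}$ induced by $h^E$, integration, and wedge product, one has $\Gamma' = \Gamma^*$ and also $(\nabla'^H)=(\nabla^H)^*$ because $H$ is real and $\nabla'$ is the dual connection. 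Consequently the twisted odd signature operator $\B'^H$ for $E'$ is, up to the identification of $\Omega^\bullet(M,E')$ with the $\tau$-dual of $\Omega^\bullet(M,E)$, the $\tau$-adjoint $(\B^H)^*$, and the finite-dimensional subcomplex $\Omega^\bullet_{[0,\lambda]}(M,E')$ is the $\tau$-dual of $\Omega^\bullet_{[0,\lambda]}(M,E)$.

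Next I would combine the pieces. On the finite-dimensional part, Lemma \ref{L:rhostarrho} gives directly $\rho_{\Gamma'_{[0,\lambda]}}=\alpha_{H^\bullet}(\rho_{\Gamma_{[0,\lambda]}})$, after identifying $H^\bullet_{[0,\lambda]}(M,E',H)$ with the $\tau$-dual of $H^\bullet_{[0,\lambda]}(M,E,H)$ (which is naturally isomorphic to $H^\bullet(M,E',H)$). On the large-eigenvalue part, I would compare the graded determinants $\Det_{\gr,\theta}(\B^H_{\bar 0,(\lambda,\infty)})$ and $\Det_{\gr,\theta}(\B'^H_{\bar 0,(\lambda,\infty)})$. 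Here the $\zeta$-regularized determinant of an operator and of its $\tau$-adjoint are complex conjugates of each other up to the contribution of the Agmon-angle bookkeeping; using the formula \eqref{E:ldetetarell} expressing $\LDet_{\gr,\theta}$ in terms of $\xi_\lambda$, $\eta_\lambda(\nabla^H)$, and the dimension counts $d^-_{\bar k,\lambda}$, and noting that $\xi_\lambda$ is real (it is a difference of $\LDet_{2\theta}$ of squares of operators whose spectra are complex-conjugation invariant), one sees that the modulus of the graded determinant is unchanged while the imaginary part flips sign in the $\eta_\lambda$ term. This produces a factor $e^{2\pi i\,\eta_\lambda(\nabla^H)}$ (plus a $d^-_{\bar k,\lambda}$ correction) relating $\rho_H(g^M)$ for $E$ and $E'$, exactly as in the proof of Theorem 10.3 of \cite{BK3}.

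Then I would assemble the global identity: $\alpha(\rho_H(\nabla^H,g^M)) = \rho_H(\nabla'^H,g^M)\cdot e^{2\pi i\eta_\lambda(\nabla^H)}\cdot(\text{sign/phase from }d^-_{\bar k,\lambda})$, pass from $\eta_\lambda(\nabla^H)$ to the full $\eta$-invariant $\eta(\nabla^H)=\eta(\B^H_{\bar 0})$ by absorbing the finite-dimensional spectral contributions in $[0,\lambda]$ (this is where the $d^-_{\bar k,\lambda}$ terms, $m_\pm$, $m_0$ from Definition \ref{D:etainv}, and the sign ambiguities get reconciled, using that the Euler characteristic vanishes so the $\mathbb{Z}_2$-graded dimension counts are even), and finally multiply both sides by the metric-anomaly factor $e^{i\pi(\operatorname{rank}E)\eta_{\trivial}}$ from Definition \ref{D:maindef}. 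Since $\alpha$ is anti-linear and $\eta_{\trivial}$ for the trivial bundle is metric-dependent but real, the correction assembles into $e^{2\pi i(\bar\eta(\nabla^H,g^M)-(\operatorname{rank}E)\eta_{\trivial}(g^M))}$, where $\bar\eta$ denotes the appropriate real part normalization as in \cite{BK3}. A continuity/connectedness argument in $\lambda$ (or in a path of metrics) fixes the overall sign to be $+1$, exactly as in the metric-independence proof above.

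The main obstacle I anticipate is the careful bookkeeping of the half-integer and sign contributions: tracking how the purely imaginary and zero eigenvalues of $\B^H$ on the finite part $[0,\lambda]$ interact with the definition \eqref{E:etainv11} of $\eta(D)$, verifying that the $\pi i$-ambiguities in $\xi_\lambda$ (from different Agmon angles) and the $\frac{i\pi}{2}\sum(-1)^k d^-_{\bar k,\lambda}$ terms combine correctly with the anti-linearity of $\alpha$, and confirming that $\eta(\B^H_{\bar 0})$ for $E$ and for $E'$ differ by exactly the expected amount (using that $\B'^H=(\B^H)^*$ so their spectra are complex-conjugate, hence $m_+$ and $m_-$ swap). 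This is the $\mathbb{Z}_2$-graded, $H$-twisted analogue of the most delicate part of the proof of Theorem 10.3 in \cite{BK3}, and the flux form $H$ enters only mildly (it is real and commutes with the relevant adjunction), so the structure of that argument should carry over with the modifications above.
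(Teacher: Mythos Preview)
Your approach is essentially the same as the paper's: split into the finite-dimensional piece $[0,\lambda]$ (handled by Lemma~\ref{L:rhostarrho}), compare the large-eigenvalue graded determinants via \eqref{E:ldetetarell} using $(\B^H)^*=\B'^H$, and then convert $\eta_\lambda$ to the full $\eta$ using the small-eigenvalue dimension counts. The paper organizes these same steps into Lemma~\ref{L:xiducong} and the identities \eqref{E:ddequ1}--\eqref{E:ddequ6}.

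Two corrections you should make. First, your claim that $\xi_\lambda$ is real is not correct in general: the operators $(\B^{H,+}_{\bar k,(\lambda,\infty)})^2$ are not self-adjoint when $\nabla$ is not Hermitian, so their $\zeta$-determinants need not be positive real. What is true (and what the paper proves in Lemma~\ref{L:xiducong}) is $\xi_\lambda(\nabla'^H,g^M,\theta)=\overline{\xi_\lambda(\nabla^H,g^M,\theta)}$ and $\eta_\lambda(\nabla'^H)=\overline{\eta_\lambda(\nabla^H)}$; this is exactly what you need when you apply the anti-linear $\alpha$ and compare with the formula for $\rho_{\an}(\nabla'^H)$. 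Second, and relatedly, the symbol $\bar\eta$ in the statement denotes the complex conjugate of the (possibly non-real) $\eta$-invariant, not a ``real part normalization.'' With these two points fixed, your bookkeeping plan (matching the $d^-_{\bar k,\lambda}$ terms via \eqref{E:ddequ1}--\eqref{E:ddequ3} and absorbing them into $2\eta$ via \eqref{E:ddequ5}) goes through exactly as in the paper, and no separate continuity argument is needed for the sign.
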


The rest of this section is concerned about the proof of Thoerem \ref{T:duality0}.

\subsection{A choice of $\lambda$}\label{SS:anglecho}
Assume that no eigenvalue of $\B^H_{(\lambda,\infty)}$ lies in the solid angles $L_{[-\theta-\pi,\theta]}$ and $L_{[-\theta,\theta+\pi]}$, cf. \cite[Subsection 10.4 and 10.5]{BK3}, then it follows that no eigenvalue of $(\B^H_{(\lambda,\infty)})^2$ lies in the solid angles $L_{[-2\theta,2\theta+2\pi]}$. 

Let $\B'^H$ denote the twisted odd signature operator associated to the connection $\nabla'$, the odd-degree form $H$ and the Riemannian metric $g^M$. One can check that 
\begin{equation}\label{E:nahdu1}
(\nabla^H)^*=\Gamma \nabla'^H \Gamma \ \text{and} \ (\nabla'^H)^*=\Gamma \nabla^H \Gamma.
\end{equation}
Using \eqref{E:toso}, \eqref{E:nahdu1} and the equality $\Gamma^*=\Gamma$, one can see that the adjoint $\B'^H$ of $\B^H$ satisfies 
\begin{equation}\label{E:nahdu2}
(\B^H)^*=\B'^H.
\end{equation}
The choice of the angle $\theta$ guarantees that $\pm 2 \theta$ are Agmon angles for the operator $(\Gamma \nabla'^H)^2=\big((\Gamma \nabla^H)^2\big)^*$. In particular, for each $\lambda \ge 0$, the number $\xi_\lambda(\nabla'^H,g^M,\theta)$ can be defined by the formula \eqref{E:xino}, with the same angle $\theta$ and with $\nabla^H$ replaced by $\nabla'^H$ everywhere.

The following lemma is twisted analogue of Lemma 10.6 of \cite{BK3} and the proof is similar to the proof of Lemma 10.6 of \cite{BK3}. We skip the proof.
\begin{lemma}\label{L:xiducong}  
Let $\theta$ be as above and let $\lambda \ge 0$ be big enough so that the operator $\B^H_{(\lambda,\infty)}$ does not have purely imaginary eigenvalues, cf. \cite[Subsection 10.5]{BK3}. Then
\[
\xi_\lambda(\nabla'^H,g^M,\theta) = \bar{\xi}_\lambda(\nabla^H,g^M,\theta),
\]
and
\begin{equation}\label{E:etanooo}
\eta_\lambda(\nabla'^H)=\bar{\eta}_\lambda(\nabla^H),
\end{equation}
where $\bar{z}$ denotes the complex conjugate of the number $z \in \mathbb{C}$.
\end{lemma}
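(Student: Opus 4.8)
The plan is to deduce both identities from the single fact that $\B'^H$ is the formal adjoint of $\B^H$, cf.\ \eqref{E:nahdu2}, together with the standard behaviour of $\zeta$- and $\eta$-functions under passage to the adjoint: passing to the adjoint conjugates the function and reflects the Agmon angle across the real axis. The choice of $\theta$ and $\lambda$ in Subsection~\ref{SS:anglecho} is arranged precisely so that this reflection of the angle costs nothing, and the hypothesis that $\B^H_{(\lambda,\infty)}$ has no purely imaginary eigenvalues removes the integer correction terms in the $\eta$-invariant. This is the twisted analogue of \cite[Lemma 10.6]{BK3}, and the proof follows it step by step.

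First I would record the adjointness relations for the pieces of the signature operator. From \eqref{E:nahdu1} and $\Gamma^*=\Gamma$ one gets $(\Gamma\nabla^H)^*=\Gamma\nabla'^H$ and $(\nabla^H\Gamma)^*=\nabla'^H\Gamma$, whence $\big((\Gamma\nabla^H)^2\big)^*=(\Gamma\nabla'^H)^2$ on $\Omega^{\bar k}(M,E)=\Omega^{\bar k}(M,E')$ (one and the same Hilbert space, since $h^E$ is unchanged). Using the splitting \eqref{E:split} one checks that $(\Gamma\nabla^H)^2$ annihilates $\Ker\nabla^H$ and preserves $\Gamma(\Ker\nabla^H)$, so that $\Omega^{\bar k}_{+,(\lambda,\infty)}(M,E)$ is exactly the spectral subspace of $(\Gamma\nabla^H)^2$ for the eigenvalues of absolute value $>\lambda$; correspondingly $\Omega^{\bar k}_{+,(\lambda,\infty)}(M,E')$ is the spectral subspace of the adjoint $(\Gamma\nabla'^H)^2$ for those eigenvalues, and if $\Pi$ denotes the spectral projection of $(\Gamma\nabla^H)^2$ onto $\Omega^{\bar k}_{+,(\lambda,\infty)}(M,E)$ then $\Pi^*$ is the corresponding spectral projection of $(\Gamma\nabla'^H)^2$. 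In this precise sense $(\B'^{H,+}_{\bar k,(\lambda,\infty)})^2$ is the adjoint of $(\B^{H,+}_{\bar k,(\lambda,\infty)})^2$.

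For $\xi_\lambda$, I would then invoke the standard relation between the $\zeta$-function of an operator and that of its adjoint, in the form $\zeta_{-2\theta}\big(s,\Pi^*,(\Gamma\nabla'^H)^2\big)=\overline{\zeta_{2\theta}\big(\bar s,\Pi,(\Gamma\nabla^H)^2\big)}$ (here the multiplicities entering the traces are positive integers and $\Tr(T^*)=\overline{\Tr\,T}$), and differentiate at $s=0$ to obtain $\LDet_{-2\theta}\big((\B'^{H,+}_{\bar k,(\lambda,\infty)})^2\big)=\overline{\LDet_{2\theta}\big((\B^{H,+}_{\bar k,(\lambda,\infty)})^2\big)}$ for $k=0,1$. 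By the choice of $\theta$ and $\lambda$ in Subsection~\ref{SS:anglecho} there are no eigenvalues of $(\B^H_{(\lambda,\infty)})^2$ — hence, after conjugation, none of $(\B'^H_{(\lambda,\infty)})^2$ — in the solid angle $L_{[-2\theta,\,2\theta+2\pi]}$, so $2\theta$ and $-2\theta$ yield the same $\LDet$, cf.\ \cite[(6-16),(6-17)]{BK3}. Feeding this into \eqref{E:xino} and combining $k=0$ and $k=1$ with the appropriate signs gives $\xi_\lambda(\nabla'^H,g^M,\theta)=\bar\xi_\lambda(\nabla^H,g^M,\theta)$.

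For \eqref{E:etanooo} I would repeat the argument with the first-order operators $\B^H_{\bar 0}$ and $\B'^H_{\bar 0}=(\B^H_{\bar 0})^*$. Additivity of the $\eta$-function over the spectral splitting $\Omega^{\bar 0}(M,E)=\Omega^{\bar 0}_{[0,\lambda]}(M,E)\oplus\Omega^{\bar 0}_{(\lambda,\infty)}(M,E)$, the relation $\eta_{-\theta}(0,\B'^H_{\bar 0})=\overline{\eta_\theta(0,\B^H_{\bar 0})}$, and the observation that on the finite-dimensional piece $\eta_{\pm\theta}(0,\cdot)$ is the real, integer-valued difference between the numbers of eigenvalues with positive and with negative real part, together give $\eta_{-\theta}\big(0,\B'^{H}_{\bar 0,(\lambda,\infty)}\big)=\overline{\eta_\theta\big(0,\B^H_{\bar 0,(\lambda,\infty)}\big)}$. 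Since $\lambda$ is chosen so large that $\B^H_{(\lambda,\infty)}$, and therefore also $\B'^H_{(\lambda,\infty)}$, has no purely imaginary (in particular no zero) eigenvalue, the correction terms $m_+,m_-,m_0$ in Definition~\ref{D:etainv} all vanish, so $\eta_\lambda(\nabla^H)=\tfrac12\eta_\theta(0,\B^H_{\bar 0,(\lambda,\infty)})$ and $\eta_\lambda(\nabla'^H)=\tfrac12\eta_{-\theta}(0,\B'^{H}_{\bar 0,(\lambda,\infty)})$, which yields $\eta_\lambda(\nabla'^H)=\bar\eta_\lambda(\nabla^H)$. The step I expect to demand the most care is the angle bookkeeping built into Subsection~\ref{SS:anglecho} — making sure that reflecting $2\theta$ to $-2\theta$ leaves $\LDet$ genuinely unchanged for both $\nabla^H$ and $\nabla'^H$ — together with the precise identification, in the second paragraph, of the relevant spectral projections for $\nabla'^H$ as the adjoints of those for $\nabla^H$; everything else is the routine conjugation calculus for $\zeta$- and $\eta$-functions.
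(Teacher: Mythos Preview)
Your proposal is correct and follows exactly the route the paper indicates: the paper does not write out a proof but simply states that the argument is the twisted analogue of \cite[Lemma 10.6]{BK3}, which is precisely the adjointness-plus-conjugation argument you carry out. Your careful identification of $(\B'^{H,+}_{\bar k,(\lambda,\infty)})^2$ as the adjoint of $(\B^{H,+}_{\bar k,(\lambda,\infty)})^2$ via the spectral projections, together with the angle bookkeeping from Subsection~\ref{SS:anglecho} and the vanishing of the $m_\pm,m_0$ corrections, is exactly what that reference does in the untwisted case.
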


\subsection{Small eigenvalues of $\B^H$ and $\B'^H$}
We define $\Omega^{\bar k}_{\pm,\mathcal{I}}(M,E'), \Omega^{\bar k}_{\pm}(M,E')$, and $\Omega^{\bar k}(M,E')$ in similar ways as $\Omega^{\bar k}_{\pm,\mathcal{I}}(M,E), \Omega^{\bar k}_{\pm}(M,E)$ and $\Omega^{\bar k}(M,E)$, respectively. As \eqref{E:dno}, for $k=0,1$, set
\begin{equation}\label{E:dno1}
d_{\bar{k},\lambda}^{\pm} = \dim \Omega^{\bar k}_{\pm,[0,\lambda]}(M,E), \quad {d'}_{\bar{k},\lambda}^{\pm} = \dim \Omega^{\bar k}_{\pm,[0,\lambda]}(M,E').
\end{equation}
From the fact that
$\Gamma \B_{\bar{k},(\lambda, \infty)}^{H,\pm} \Gamma = \B_{\overline{k+1},(\lambda, \infty)}^{H,\mp}$, we conclude that
\begin{equation}\label{E:aaa}
\Gamma \Big( \Omega^{\bar k}_{\pm,[0,\lambda]}(M,E) \Big) = \Big( \Omega^{\overline{k+1}}_{\mp,[0,\lambda]}(M,E) \Big), \quad k=0,1.
\end{equation}
Therefore 
\[
d_{\bar{k},\lambda}^{\pm} = d_{\overline{k+1},\lambda}^{\mp}, \quad k=0,1.
\]
Hence
\begin{equation}\label{E:ddequ1}
\sum_{k=0,1}(-1)^k d_{\bar{k},\lambda}^- = d_{\bar{0},\lambda}^- - d_{\bar{1},\lambda}^- \equiv d_{\bar{0},\lambda}^- + d_{\bar{0},\lambda}^+ = \dim \Omega^{\bar 0}_{[0,\lambda]}(M,E), \quad \text{mod} \ 2\mathbb{Z}.
\end{equation}
From \eqref{E:nahdu2} and \eqref{E:aaa}, we obtain
\begin{equation}\label{E:ddequ2}
\dim \Omega^{\bar 0}_{[0,\lambda]}(M,E)=\dim \Omega^{\bar 1}_{[0,\lambda]}(M,E)=\dim \Omega^{\bar 0}_{[0,\lambda]}(M,E')=\dim \Omega^{\bar 1}_{[0,\lambda]}(M,E').
\end{equation}
Hence by \eqref{E:ddequ1} and \eqref{E:ddequ2}, we have
\begin{equation}\label{E:ddequ3}
\sum_{k=0,1}(-1)^k {d'}_{\bar{k},\lambda}^- = \dim \Omega^{\bar 0}_{[0,\lambda]}(M,E), \quad \text{mod} \ 2\mathbb{Z}.
\end{equation}

By the definition \eqref{E:etainv11}, we obtain 
\begin{equation}\label{E:ddequ4}
2\eta(\B^H_{\bar{0},[0,\lambda]}) \equiv \dim \Omega^{\bar 0}_{[0,\lambda]}(M,E), \quad \text{mod} \ 2\mathbb{Z}.
\end{equation}

From \eqref{E:etano}, \eqref{E:ddequ3} and \eqref{E:ddequ4}, we obtain, modulo $2\mathbb{Z}$,
\begin{align}\label{E:ddequ5}
2\eta(\B^H_{\bar{0}}(\nabla^H)) & =  2 \eta(\B^H_{\bar{0},(\lambda, \infty)}(\nabla^H)) + 2\eta(\B^H_{\bar{0},[0,\lambda]}) \nonumber \\
& \equiv  2 \eta_\lambda(\nabla^H) + \sum_{k=0,1}d^-_{\bar{k},\lambda}.
\end{align}
Similarly,
\begin{equation}\label{E:ddequ6}
2\eta(\B^H_{\bar{0}}(\nabla'^H)) \equiv 2 \eta_\lambda(\nabla'^H) + \sum_{k=0,1}d^-_{\bar{k},\lambda}, \quad \text{mod} \ 2\mathbb{Z}.
\end{equation}

\subsection{Proof of Theorem \ref{T:duality0}}
Let $\rho'_{\Gamma_{[0,\lambda]}}$ be the twisted refined torsion of the complex $\Omega_{[0,\lambda]}^\bullet(M,E')$ associated to the restriction of $\Gamma$ to $\Omega_{[0,\lambda]}^\bullet(M,E')$.

By Lemma \ref{L:rhostarrho}, \eqref{E:antiiso} and the fact that $\Gamma^*=\Gamma$, cf. \cite[Proposition 3.58]{BGV},
we obtain 
\begin{equation}\label{E:rhpri5}
\rho'_{\Gamma_{[0,\lambda]}} = \alpha(\rho_{\Gamma_{[0,\lambda]}})
\end{equation}
From \eqref{E:rhoh}, \eqref{E:ldetetarell} and Definition \ref{D:maindef}, we obtain 
\begin{equation}\label{E:ldetetarell123}
\rho_{\operatorname{an}}(\nabla^H) = \rho_{\Gamma_{[0,\lambda]}} \cdot \exp\Big(\xi_\lambda(\nabla^H,g^M,\theta) - i \pi \eta_\lambda(\nabla^H) -\frac{i \pi}{2}\sum_{k=0,1}(-1)^k d^-_{\bar{k},\lambda} + i \pi (\operatorname{rank} (E))\eta_{\trivial} \Big).
\end{equation} 
Since $\alpha$ is an anti-linear isomorphism, $\alpha(\rho_{\an} \cdot z)=\alpha(\rho_{\an}) \cdot \bar{z}$ for any $z \in \mathbb{C}$. Hence, from \eqref{E:rhpri5} and \eqref{E:ldetetarell123}, we get
\begin{equation}\label{E:rhpri3}
\alpha \big( \rho_{\operatorname{an}}(\nabla^H) \big) = \rho'_{\Gamma_{[0,\lambda]}} \cdot \exp\Big(\bar{\xi}_\lambda(\nabla^H,g^M,\theta) + i \pi \bar{\eta}_\lambda(\nabla^H) + \frac{i \pi}{2}\sum_{k=0,1}(-1)^k d^-_{\bar{k},\lambda} - i \pi (\operatorname{rank} (E))\eta_{\trivial} \Big).
\end{equation}
Using Lemma \ref{L:xiducong} and the analogue of \eqref{E:ldetetarell123} for $\rho_{\an}(\nabla'^H)$, we obtain from \eqref{E:rhpri3}
\begin{equation}\label{E:rhpri30}
\alpha \big( \rho_{\operatorname{an}}(\nabla^H) \big) = \rho_{\operatorname{an}}(\nabla'^H) \cdot \exp\Big(2 i \pi \bar{\eta}_\lambda(\nabla^H) + i \pi \sum_{k=0,1}(-1)^k d^-_{\bar{k},\lambda} - i \pi (\operatorname{rank} (E))\eta_{\trivial} \Big).
\end{equation}
From \eqref{E:rhpri30} and \eqref{E:ddequ6}, we obtain \eqref{E:aloh15}.


\section{Comparison with the twisted analytic torsion}\label{S:cwtat}
In this section we first define the twisted Ray-Singer metric $\| \cdot \|^{\RS}_{\Det ( H^\bullet(M,E,H) )}$ and then calculate the twisted Ray-Singer norm $\| \rho_{\an}(\nabla^H) \|^{\RS}_{\Det ( H^\bullet(M,E,H) )}$ of the twisted refined analytic torsion. In particular, we show that, if $\nabla$ is a Hermitian metric, then $\| \rho_{\an}(\nabla^H) \|^{\RS}_{\Det ( H^\bullet(M,E,H) )}=1$.

\subsection{The twisted analytic torsion}\label{SS:twrst1}
Let $E \to M$ be a complex vector bundle over a cloed oriented manifold $M$ of odd dimension $m=2r-1$. Let $\nabla$ be a flat connection on $E$ and $H$ be a odd degree closed form, other than one form, on $M$. Fix a Riemannian metric $g^M$ on $M$ and a Hermitian metric $h^E$ on $E$. Let ${\nabla^H}^*$ denote the adjoint of $\nabla^H:=\nabla +H \wedge \cdot$ with respect to the scalar product $<\cdot, \cdot>_M$ on $\Omega^\bullet(M,E)$ defined by $h^E$ and the Riemannian metric $g^M$.


Now let
\[
\Delta^H = {\nabla^H}^* \nabla^H + \nabla^H {\nabla^H}^*
\]
be the Laplacian twisted by the form $H$. We denote by $\Delta^H_{\bar k}$ the restriction of $\Delta^H$ to $\Omega^{\bar k}(M,E), k=0,1$. Assume that $\mathcal{I}$ is an interval of the form $[0,\lambda], (\lambda, \mu], (\lambda, \infty) (\mu \ge \lambda \ge 0)$ and let $\Pi_{\Delta^H_{\bar{k}},\mathcal{I}}$ be the spectral projection of $\Delta^H_{\bar k}$ corresponding to $\mathcal{I}$, cf. Subsection~\ref{SS:gdto}. Set
\[
\check{\Omega}^{\bar k}_{\mathcal{I}}(M,E):= \Pi_{\Delta^H_{\bar k},\mathcal{I}} \big( \Omega^\bullet(M,E) \big) \subset \Omega^{\bullet}(M,E).
\]
Let $\Delta^{H,\mathcal{I}}_k$ denote the restriction of $\Delta^H_{\bar k}$ to $\check{\Omega}^{\bar k}_{\mathcal{I}}(M,E)$ and define
\begin{equation}\label{E:trs11}
\trs_{\mathcal{I}} = \trs_{\mathcal{I}}(\nabla^H):=\exp \Big( \frac{1}{2} \sum_{k=0,1}(-1)^{k+1} \LDet'_{-\pi}\big( {\nabla^H}^* \nabla^H  \big)\big|_{\Omega^{\bar k}_{\mathcal{I}}(M,E)} \Big).
\end{equation}
It is not difficult to check that, for any non-negative, real numbers $\mu \ge \lambda \ge 0$,
\begin{equation}\label{E:spli2}
\trs_{(\lambda,\infty)} = \trs_{(\lambda,\mu]}\cdot \trs_{[\mu,\infty)}.
\end{equation}

Note that if $\eta_{\bar k}$ is the unit volume element of $H^{\bar k}(M,E,H), k=0,1$, then 
\begin{equation}\label{E:tatmw}
\tau(M,E,H):= \big(\trs_{(0,\infty)}\big)^{-1} \cdot \eta_{\bar 0} \otimes \eta_{\bar 1}^{-1} \in \Det \big( H^\bullet(M,E,H) \big)
\end{equation} 
is the {\em twisted analytic torsion}, introduced by V. Mathai and S. Wu in \cite{MW}.

For each $\lambda >0$, the cohomology of the finite dimensional complex $\big( \check{\Omega}^\bullet_{[0,\lambda]}(M,E),\nabla^H \big)$ is naturally isomorphic to $H^\bullet(M,E,H)$. Identifying these two cohomology spaces, we then obtain from \eqref{E:phic} an isomorphism
\begin{equation}\label{E:philam}
\phi_\lambda = \phi_{\check{\Omega}^\bullet_{[0,\lambda]}(M,E)} : \Det \big( \check{\Omega}^\bullet_{[0,\lambda]}(M,E) \big) \rightarrow \Det \big( H^\bullet(M,E,H) \big).
\end{equation}
The scalar product $<\cdot,\cdot>$ on $\check{\Omega}^\bullet_{[0,\lambda]}(M,E) \subset \Omega^\bullet(M,E)$ defined by $g^M$ and $h^E$ induces a metric ${\| \cdot \|}_{\Det \big( \check{\Omega}^\bullet_{[0,\lambda]}(M,E)\big)}$ on the determinant line $\Det \big( \check{\Omega}^\bullet_{[0,\lambda]}(M,E)\big)$. Let ${\|\cdot \|_{\lambda}}$ denote the metric on the determinant line $\Det\big(H^\bullet(M,E,H) \big)$ such that the isomorphism \eqref{E:philam} is an isometry. Then, for $c \in \Det \big( \check{\Omega}^\bullet_{[0,\lambda]}(M,E) \big)$, we have
\begin{equation}\label{E:isoequiv}
{\| c \|}_{\Det \big( \check{\Omega}^\bullet_{[0,\lambda]}(M,E)\big)} = {\|\phi_\lambda(c) \|_{\lambda}}.
\end{equation}

Using the Hodge theory, we have the canonical identification 
\[
H^{\bar k}(M,E,H) \cong \Ker \Delta^H_{\bar k}, \quad k=0,1.
\]
By their inclusion in $\Omega^{\bar k}(M,E)$, the space of twisted harmonic forms $\Ker \Delta^H_{\bar k}$ inherits a metric. We denote by $|\cdot|_{\Det\big( H^\bullet(M,E,H) \big)}$ the corresponding metric on $\Det\big( H^\bullet(M,E,H) \big)$. By definition 
\begin{equation}\label{E:metricequ}
{\| \cdot \|}_{\Det \big( \check{\Omega}^\bullet_{\{0\}}(M,E)\big)} = |\cdot|_{\Det\big( H^\bullet(M,E,H) \big)}.
\end{equation}

The following is twisted analogue of \cite[Proposition 1.5]{BGS}. See also \cite{RuSe} for the contact version.
\begin{proposition}
\begin{equation}\label{E:metrican}
\|\cdot \|_\lambda = |\cdot|_{\Det\big( H^\bullet(M,E,H) \big)} \cdot \trs_{(0,\lambda]}.
\end{equation}
\end{proposition}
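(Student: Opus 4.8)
The plan is to decompose the finite-dimensional complex $\big(\check{\Omega}^\bullet_{[0,\lambda]}(M,E),\nabla^H\big)$ into its harmonic part and an acyclic remainder, and to reduce \eqref{E:metrican} to a computation of the ``torsion norm'' of the acyclic part. Since $\Delta^H_{\bar k}$ is self-adjoint on $\check{\Omega}^{\bar k}_{[0,\lambda]}(M,E)$ with $\{0\}$-eigenspace $\Ker\Delta^H_{\bar k}\cong H^{\bar k}(M,E,H)$, there is an $L^2$-orthogonal decomposition of $\mathbb{Z}_2$-graded complexes
\[
\big(\check{\Omega}^\bullet_{[0,\lambda]}(M,E),\nabla^H\big)\;=\;\big(\check{\Omega}^\bullet_{\{0\}}(M,E),0\big)\;\oplus\;\big(\check{\Omega}^\bullet_{(0,\lambda]}(M,E),\nabla^H\big),
\]
where the second summand is acyclic (the twisted Laplacian is invertible on it). Both Euler characteristics vanish, so Lemma~\ref{L:commu} applies. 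First I would note that the fusion isomorphism $\mu$ is, up to a sign that is immaterial for norms, an isometry for the metrics induced by an orthogonal direct sum of inner-product spaces. Combined with Lemma~\ref{L:commu}, with the fact that $\phi_{\check{\Omega}^\bullet_{\{0\}}(M,E)}$ is the canonical identification $\Det\big(\check{\Omega}^\bullet_{\{0\}}(M,E)\big)=\Det\big(H^\bullet(M,E,H)\big)$, and with $\Det\big(H^\bullet(\nabla^H|_{(0,\lambda]})\big)=\mathbb{C}$, the metric $\|\cdot\|_\lambda$ of \eqref{E:philam}--\eqref{E:isoequiv} factors as
\[
\|\cdot\|_\lambda \;=\; |\cdot|_{\Det(H^\bullet(M,E,H))}\cdot t_\lambda,\qquad t_\lambda:=\frac{\|v\|_{L^2}}{\big|\phi_{\check{\Omega}^\bullet_{(0,\lambda]}(M,E)}(v)\big|},
\]
using \eqref{E:metricequ} for the harmonic factor; here $t_\lambda$ is independent of the choice of $0\ne v\in\Det\big(\check{\Omega}^\bullet_{(0,\lambda]}(M,E)\big)$. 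It then remains to prove $t_\lambda=\trs_{(0,\lambda]}$.

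To evaluate $t_\lambda$, I would set $V_{\bar k}:=\check{\Omega}^{\bar k}_{(0,\lambda]}(M,E)$ and use the Hodge decomposition of the acyclic complex $V_{\bar k}=A_{\bar k}\oplus B_{\bar k}$, with $B_{\bar k}=\im(\nabla^H|_{V_{\overline{k+1}}})=\Ker(\nabla^H|_{V_{\bar k}})$ and $A_{\bar k}=B_{\bar k}^\perp$, so that $\nabla^H$ induces isomorphisms $D_{\bar k}\colon A_{\bar k}\xrightarrow{\ \sim\ }B_{\overline{k+1}}$. Taking $v=c_{\bar 0}\otimes c_{\bar 1}^{-1}$ with $c_{\bar k}$ a unit volume element of $V_{\bar k}$ (so $\|v\|_{L^2}=1$), choosing orthonormal bases adapted to the decompositions $V_{\bar k}=A_{\bar k}\oplus B_{\bar k}$, and evaluating \eqref{E:cbahce} (with $H^{\bar k}=0$) followed by \eqref{E:phic} with $A^{\bar k}=A_{\bar k}$, $B^{\bar k}=B_{\bar k}$, a short exterior-algebra computation yields
\[
\big|\phi_{\check{\Omega}^\bullet_{(0,\lambda]}(M,E)}(v)\big|\;=\;\frac{\lvert\det D_{\bar 0}\rvert}{\lvert\det D_{\bar 1}\rvert}\;=\;\left(\frac{\det\big((\nabla^H)^*\nabla^H\big|_{A_{\bar 0}}\big)}{\det\big((\nabla^H)^*\nabla^H\big|_{A_{\bar 1}}\big)}\right)^{1/2}.
\]
Because $\nabla^H$ vanishes on $B_{\bar k}$, the operator $(\nabla^H)^*\nabla^H$ vanishes on $B_{\bar k}$, so $\det\big((\nabla^H)^*\nabla^H|_{A_{\bar k}}\big)=\det'\big((\nabla^H)^*\nabla^H|_{V_{\bar k}}\big)$. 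Hence $t_\lambda=\big(\det'((\nabla^H)^*\nabla^H|_{V_{\bar 1}})/\det'((\nabla^H)^*\nabla^H|_{V_{\bar 0}})\big)^{1/2}$, which is precisely $\trs_{(0,\lambda]}$ by \eqref{E:trs11} (on finite-dimensional spaces $\LDet'$ is $\log\det'$). Together with the first paragraph this gives \eqref{E:metrican}.

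I expect the main obstacle to be that last computation: pinning down the abstract determinant-line expression \eqref{E:phic}--\eqref{E:cbahce} for the acyclic complex and matching it, with the correct factor in the numerator, to the analytic definition \eqref{E:trs11} of $\trs_{(0,\lambda]}$. Everything here is a $\mathbb{Z}_2$-graded, twisted version of the classical argument behind \cite[Proposition~1.5]{BGS}; the sign ambiguities in Lemma~\ref{L:commu} and in the various fusion isomorphisms are harmless because \eqref{E:metrican} is an identity of metrics.
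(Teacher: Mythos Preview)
Your proposal is correct and follows essentially the same strategy as the paper: both split $\check{\Omega}^\bullet_{[0,\lambda]}$ orthogonally into the harmonic piece $\check{\Omega}^\bullet_{\{0\}}$ and the acyclic piece $\check{\Omega}^\bullet_{(0,\lambda]}$, use the Hodge decomposition $\check{\Omega}^{\bar k}_{(0,\lambda]}=\Ker\nabla^H\oplus\Ker(\nabla^H)^*$ on the acyclic part, and identify the resulting ratio of determinants with $\trs_{(0,\lambda]}$.

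The only difference is packaging. You invoke Lemma~\ref{L:commu} to factor $\|\cdot\|_\lambda$ cleanly as $|\cdot|_{\Det(H^\bullet)}\cdot t_\lambda$ and then compute $t_\lambda=\lvert\det D_{\bar 1}\rvert/\lvert\det D_{\bar 0}\rvert$ abstractly via \eqref{E:cbahce}--\eqref{E:phic}. The paper instead works directly with the norms: it chooses $c''_{\bar k}=\nabla^H(a_{\overline{k+1}})\wedge a_{\bar k}$, further decomposes $\check{\Omega}^{\bar k}_{(0,\lambda]}$ into eigenspaces of $\Delta^H$, and uses the elementary identity $\|\nabla^H v\|=\nu^{1/2}\|v\|$ on the $\nu$-eigenspace to read off the determinant ratio. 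Your route is slightly more conceptual and reuses machinery already set up in Section~\ref{S:rtrev}; the paper's route is more hands-on and avoids appealing to Lemma~\ref{L:commu}. Both lead to the same formula, and your observation that the sign ambiguities are irrelevant for a statement about metrics is exactly right.
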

\begin{proof}
For $k=0,1$, fix $c'_{\bar k} \in  \Det \big( \check{\Omega}_{\{0\}}^{\bar k}(M,E)\big) \cong \Det \big( H^{\bar k}(M,E,H) \big)$ and $c''_{\bar k} \in \Det \big( \check{\Omega}_{(0,\lambda]}^{\bar k}(M,E)\big)$. Then, using the natural isomorphism
\begin{align*}
& \Det \big( \check{\Omega}_{\{0\}}^{\bar k}(M,E)\big) \otimes \Det \big( \check{\Omega}_{(0,\lambda]}^{\bar k}(M,E)\big) \nonumber \\  
& \cong  \Det \big(\check{\Omega}_{\{0\}}^{\bar k}(M,E) \oplus \check{\Omega}_{(0,\lambda]}^{\bar k}(M,E) \big) \nonumber \\
& =  \Det \big( \check{\Omega}_{[0,\lambda]}^{\bar k}(M,E) \big),
\end{align*}
we can regard the tensor product $c_{\bar k}:= c'_{\bar k} \otimes c''_{\bar k}$ as an element of $\Det \big( \check{\Omega}_{[0,\lambda]}^{\bar k}(M,E) \big)$. Denote by 
\[
\check{\Omega}^{\bar{k},-}_{\mathcal{I}}(M,E) = \Ker \nabla^H \cap \check{\Omega}^{\bar{k}}_{\mathcal{I}}(M,E), \, \quad
\check{\Omega}^{\bar{k},+}_{\mathcal{I}}(M,E) = \Ker {\nabla^H}^* \cap \check{\Omega}^{\bar{k}}_{\mathcal{I}}(M,E).
\]
Since the complexes $\check{\Omega}^{\bar k}_{(0,\lambda]}(M,E), k=0,1$, are acyclic, it is not difficult to see that each $\check{\Omega}^{\bar k}_{(0,\lambda]}(M,E)$ splits orthogonally into 
\begin{equation}\label{E:splittt}
\check{\Omega}^{\bar k}_{(0,\lambda]}(M,E) = \check{\Omega}^{\bar{k},-}_{(0,\lambda]}(M,E) \oplus \check{\Omega}^{\bar{k},+}_{(0,\lambda]}(M,E).
\end{equation}
Take $a_{\bar k} \in \Det \big(\check{\Omega}^{\bar{k},+}_{(0,\lambda]}(M,E)\big)$ so that $c''_{\bar k}=\nabla^H (a_{\overline{k+1}}) \wedge a^{\bar k}$. Denote by $c=c'\otimes c''$, where $c'=c'_{\bar 0} \otimes {c'_{\bar 1}}^{-1}$ and $c''=c''_{\bar 0} \otimes {c''}^{-1}_{\bar 1}$. Then 
\begin{align}\label{E:split1}
\|c \|_{\Det \big( \check{\Omega}^{\bullet}_{[0,\lambda]}(M,E) \big)} & =  \|c'\|_{\Det\big( \check{\Omega}_{\{0\}}^\bullet(M,E) \big)} \times \| c'' \|_{\Det \big( \check{\Omega}_{(0,\lambda]}^{\bar k}(M,E)\big)}\nonumber \\ 
& = \|c'\|_{\Det\big( \check{\Omega}_{\{0\}}^\bullet(M,E) \big)} \times \|a_{\bar 0}\|_{\Det \big( \check{\Omega}^{\bar{0},+}_{(0,\lambda]}(M,E) \big)}\times \|\nabla^H(a_{\bar 1})\|_{\Det \big( \check{\Omega}^{\bar{0},-}_{(0,\lambda]}(M,E) \big)}
\nonumber  \\ 
& \times \Big(\|a_{\bar 1}\|_{\Det \big( \check{\Omega}^{\bar{1},+}_{(0,\lambda]}(M,E) \big)} \Big)^{-1}
  \times \Big(\|\nabla^H(a_{\bar 0})\|_{\Det \big( \check{\Omega}^{\bar{1},-}_{(0,\lambda]}(M,E) \big)} \Big)^{-1}.
\end{align}
where $\| \cdot \|_{V}$ denotes the naturally induced norm on the subspace $V$. The space $\check{\Omega}^{\bar k}_{(0,\lambda]}(M,E)$ splits orthogonally into eigenspaces.
\[
\check{\Omega}^{\bar k}_{(0,\lambda]}(M,E)= \oplus_{\nu \le \lambda}\check{\Omega}^{\bar k}_{\{\nu\}}(M,E)
\]
Given $\nu \in (0,\lambda]$, we choose an orthogonal basis $(v_1, \cdots, v_{n_k})$ of each eigenspace $\check{\Omega}^{\bar{k},+}_{\{ \nu \}}(M,E)$ and choose the element $a_{\bar{k},\nu}=v_1 \wedge \cdots \wedge v_{n_k} \in \Det \big( H^{\bar{k},+}_{\{\nu \}}(M,E,H) \big)$, where $n_k=\dim \check{\Omega}^{\bar{k},+}_{\{ \nu \}}(M,E)$. Then
\begin{align}\label{E:split2}
\| \nabla^H (a_{\bar{k},\nu}) \|_{\Det\big( \check{\Omega}^{\overline{k+1},-}_{\{ \nu \}}(M,E) \big)} & =  \| \nabla^H v_1 \wedge \cdots \wedge \nabla^H v_{n_k}\|_{\Det\big( \check{\Omega}^{\overline{k+1},-}_{\{ \nu \}}(M,E) \big)} \nonumber \\
& =  \| \nabla^H v_1\|_{\Det\big( \check{\Omega}^{\overline{k+1},-}_{\{ \nu \}}(M,E) \big)} \times  \cdots \times \| \nabla^H v_{n_k}\|_{\Det\big( \check{\Omega}^{\overline{k+1},-}_{\{ \nu \}}(M,E) \big)} \nonumber \\ 
& =   \nu^{\frac{n_k}{2}} \|v_1\|_{\Det\big( \check{\Omega}^{\bar{k},+}_{\{ \nu \}}(M,E) \big)} \times \cdots \times \|v_{n_k}\|_{\Det\big( \check{\Omega}^{\bar{k},+}_{\{ \nu \}}(M,E) \big)} \nonumber \\
 & = \nu^{\frac{n_k}{2}}\| a_{\bar{k},\nu}\|_{\Det\big( \check{\Omega}^{\bar{k},+}_{\{ \nu \}}(M,E) \big)}. 
\end{align}
By combining \eqref{E:split1} with \eqref{E:split2}, we obtain
\begin{equation}\label{E:split3}
{\|c \|}_{\Det\big( \check{\Omega}_{[0,\lambda]}^\bullet(M,E) \big)} = \|c'\|_{\Det\big( \check{\Omega}_{\{0\}}^\bullet(M,E) \big)} \times \prod_{k=0,1} \big(\Det_{-\pi}\big( \nabla^H {\nabla^H}^* \big) \big|_{ \Omega^{\bar k}_{(0,\lambda]}(M,E)} \big)^{(-1)^{k+1}/2}.
\end{equation}
By \eqref{E:trs11}, \eqref{E:isoequiv}, \eqref{E:metricequ} and \eqref{E:split3}, we obtain the result.
\end{proof}
By \eqref{E:metrican}, we have, for $0 \le \lambda \le \mu$,
\begin{equation}\label{E:spli1}
\|\cdot \|_\mu = \| \cdot \|_\lambda \cdot \trs_{(\lambda,\mu]}.
\end{equation} 
The {\em twisted Ray-Singer metric} on $\Det \big( H^\bullet(M,E,H) \big)$ is defined by the formula
\begin{equation}\label{E:metric}
\| \cdot \|^{\RS}_{\Det \big( H^\bullet(M,E,H) \big)}:= \| \cdot \|_\lambda \cdot \trs_{(\lambda,\infty)}, \quad \lambda \ge 0.
\end{equation}
It follows immediately from \eqref{E:spli2} and \eqref{E:spli1} that $\| \cdot \|_{\Det \big( H^\bullet(M,E,H) \big)}$ is independent of the choice of $\lambda \ge 0$. Note that for $\lambda =0$, by \eqref{E:tatmw} and \eqref{E:metric}, we have
\[
\| \tau(M,E,H) \|^{\RS}_{\Det \big( H^\bullet(M,E,H) \big)}=1.
\]

\begin{theorem}\label{T:comthm1}
Let $E$ be a complex vector bundle over a closed oriented odd dimensional manifold $M$ and let $\nabla$ be a flat connection on $E$. Further, let $H$ be a odd-degree closed form on $M$ and denote by $\nabla^H:=\nabla+H \wedge \cdot$. Then
\begin{equation}\label{E:comthm2}
\| \rho_{\an}(\nabla^H) \|^{\RS}_{\Det\big( H^\bullet(M,E,H) \big)}= e^{\pi \operatorname{Im} \eta(\nabla^H,g^M)},
\end{equation}
where 
\[
\eta(\nabla^H,g^M) = \eta\big( \B_{\bar 0}(\nabla^H,g^M) \big).
\]
In particular, if $\nabla$ is a Hermitian connection, then $\| \rho_{\an}(\nabla^H) \|^{\RS}_{\Det\big( H^\bullet(M,E,H) \big)}=1$.
\end{theorem}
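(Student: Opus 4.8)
The plan is to follow the strategy of \cite[Section 11]{BK3}, adapting it to the $\mathbb{Z}_2$-graded twisted setting. The basic idea is to compute the twisted Ray-Singer norm of $\rho_{\an}(\nabla^H)$ by splitting the spectrum of $(\B^H)^2$ at some $\lambda > 0$, using the finite-dimensional part for the refined torsion and the large-eigenvalue part for the analytic torsion factor $\trs$. First I would fix $\lambda > 0$ avoiding the absolute values of eigenvalues of $(\B^H)^2$ and use formula \eqref{E:rhoh} together with \eqref{E:ldetetarell} to write
\[
\rho_{\an}(\nabla^H) = \rho_{\Gamma_{[0,\lambda]}} \cdot \exp\Big( \xi_\lambda - i\pi \eta_\lambda(\nabla^H) - \tfrac{i\pi}{2}\textstyle\sum_{k=0,1}(-1)^k d^-_{\bar k,\lambda} + i\pi(\operatorname{rank} E)\eta_{\trivial}\Big),
\]
exactly as in \eqref{E:ldetetarell123}. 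Then by \eqref{E:metric} with the same $\lambda$,
\[
\| \rho_{\an}(\nabla^H) \|^{\RS}_{\Det(H^\bullet(M,E,H))} = \|\rho_{\Gamma_{[0,\lambda]}}\|_\lambda \cdot \trs_{(\lambda,\infty)} \cdot e^{\re \xi_\lambda + \pi\,\im \eta_\lambda(\nabla^H) + \tfrac{\pi}{2}\sum_{k}(-1)^k d^-_{\bar k,\lambda}},
\]
since $\eta_{\trivial}$ and the $d^-_{\bar k,\lambda}$ are real (so the terms $e^{i\pi(\cdots)}$ contribute nothing to the absolute value) and $\|\cdot\|_\lambda$ is an honest norm.

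The computation then splits into two pieces. For the finite-dimensional piece I would compute $\|\rho_{\Gamma_{[0,\lambda]}}\|_\lambda$ directly: using the isometry \eqref{E:philam}, \eqref{E:isoequiv} and the definition \eqref{E:elerho}, \eqref{E:elec} of $\rho_{\Gamma_{[0,\lambda]}}$ via a unit-norm element $c_{\bar 0} \in \Det(\Omega^{\bar 0}_{[0,\lambda]}(M,E))$, one has that $\|\rho_{\Gamma_{[0,\lambda]}}\|_\lambda$ equals the norm in $\Det(\Omega^\bullet_{[0,\lambda]}(M,E))$ of $c_{\bar 0} \otimes (\Gamma c_{\bar 0})^{-1}$, which is $\|\Gamma|_{\Omega^{\bar 0}_{[0,\lambda]}}\|^{-1}$ in the determinant sense, i.e. $|\Det(\Gamma^*\Gamma|_{\Omega^{\bar 0}_{[0,\lambda]}})|^{-1/2}$. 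Since $\Gamma$ is in general \emph{not} unitary (it is unitary precisely when $h^E$ is flat), this factor is the source of the nontrivial right-hand side. For the infinite part, the key identity is
\[
\re \xi_\lambda = \tfrac12 \big( \re \LDet_{2\theta}((\B^{H,+}_{\bar 0,(\lambda,\infty)})^2) - \re \LDet_{2\theta}((\B^{H,+}_{\bar 1,(\lambda,\infty)})^2)\big),
\]
and on the large-eigenvalue subspace $\B^{H,\pm}_{\bar k}$ squares to $\Delta^H_{\bar k}$ restricted to $\Omega^{\bar k}_{\pm}$; combining with \eqref{E:trs11} and the relation $\Gamma\Omega^{\bar k}_{+} = \Omega^{\overline{k+1}}_{-}$ one rewrites $\re\xi_\lambda + \log\trs_{(\lambda,\infty)}$ in terms of $\zeta$-determinants of $\Delta^H_{\bar k}$ on $\Omega^{\bar k}_{(\lambda,\infty)}(M,E)$ and a contribution from the non-unitarity of $\Gamma$ there.

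The main obstacle, and the heart of the argument, will be bookkeeping: tracking how the non-unitarity of $\Gamma$ enters in \emph{both} the finite-dimensional factor $\|\rho_{\Gamma_{[0,\lambda]}}\|_\lambda$ and the regularized-determinant factor $e^{\re\xi_\lambda}\cdot\trs_{(\lambda,\infty)}$, and showing that everything \emph{except} a clean imaginary-$\eta$ contribution cancels. The mechanism is the same as in \cite[Theorem 11.3]{BK3}: one relates $\Det'_\theta(\B^{H,+}_{\bar k})$ to $\Det'_\theta((\B^{H,+}_{\bar k})^*) = \Det'_{\bar\theta}(\B'^{H,+}_{\bar k})$ via complex conjugation, uses that $\|\rho_{\Gamma_{[0,\lambda]}}\|_\lambda^2 = \langle \rho_{\Gamma_{[0,\lambda]}}, \rho_{\Gamma_{[0,\lambda]}}\rangle$ pairs against $\alpha(\rho_{\Gamma_{[0,\lambda]}}) = \rho'_{\Gamma_{[0,\lambda]}}$ (Lemma \ref{L:rhostarrho}, \eqref{E:rhpri5}), and invokes the duality relations \eqref{E:nahdu2} together with Lemma \ref{L:xiducong}. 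Concretely, I would show
\[
\big(\| \rho_{\an}(\nabla^H) \|^{\RS}\big)^2 = \rho_{\an}(\nabla^H) \cdot \overline{\rho_{\an}(\nabla^H)} \Big/ (\text{norm of a unit cohomology element})
\]
and then use the duality theorem \ref{T:duality0} to identify the right-hand side with $e^{2\pi\,\im\eta(\nabla^H,g^M)}$: indeed \eqref{E:aloh15} gives $\alpha(\rho_{\an}(\nabla^H)) = \rho_{\an}(\nabla'^H)\cdot e^{2\pi i(\bar\eta(\nabla^H,g^M) - (\operatorname{rank}E)\eta_{\trivial})}$, and pairing the twisted analytic torsion normalization ($\|\tau(M,E,H)\|^{\RS}=1$) against this, together with $\|\tau(M,E',H)\|^{\RS} = 1$ for the dual, forces \eqref{E:comthm2}. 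Finally, when $\nabla$ is a Hermitian connection, $\nabla' = \nabla$, so $\B'^H = \B^H$ and \eqref{E:nahdu2} shows $\B^H$ is self-adjoint; hence $\eta(\nabla^H,g^M)$ is real, $\im\eta = 0$, and $\| \rho_{\an}(\nabla^H) \|^{\RS}_{\Det(H^\bullet(M,E,H))} = 1$.
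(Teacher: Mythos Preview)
Your overall strategy---split at $\lambda$, take norms in \eqref{E:ldetetarell123}, and invoke the duality theorem---matches the paper's, but there is a genuine gap at the end and a conceptual slip in the middle.

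First the slip: $\Gamma$ \emph{is} unitary (in fact $\Gamma^* = \Gamma$, as the paper uses explicitly in \eqref{E:rhpri5}, citing \cite[Proposition~3.58]{BGV}); this holds regardless of whether $h^E$ is parallel. So $\|\rho_{\Gamma_{[0,\lambda]}}\|_\lambda$ is \emph{not} $|\Det(\Gamma^*\Gamma)|^{-1/2}$ in any useful sense, and the ``non-unitarity of $\Gamma$'' is not the source of the right-hand side in \eqref{E:comthm2}. The nontrivial contribution comes from the fact that $\B^H$ fails to be self-adjoint when $\nabla$ is not Hermitian (equivalently, $\nabla \neq \nabla'$), which makes $\eta(\nabla^H,g^M)$ genuinely complex.

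Now the gap. The duality theorem together with the isometry property of $\alpha$ gives you exactly one relation, namely \eqref{E:rhoimet}:
\[
\|\rho_{\an}(\nabla^H)\|^{\RS} \;=\; \|\rho_{\an}(\nabla'^H)\|^{\RS}\cdot e^{2\pi\,\im\eta(\nabla^H,g^M)}.
\]
This is one equation in two unknowns; knowing $\|\tau(M,E,H)\|^{\RS}=1$ and $\|\tau(M,E',H)\|^{\RS}=1$ does not ``force'' \eqref{E:comthm2}, because you have no a priori control on the ratio $\rho_{\an}(\nabla^H)/\tau(M,E,H)$. The paper closes this gap by a second, independent step you omit: form the direct sum connection $\widetilde{\nabla} = \nabla \oplus \nabla'$ on $E\oplus E$, use multiplicativity to get
\[
\|\rho_{\an}(\widetilde{\nabla}^H)\|^{\RS} \;=\; \big(\|\rho_{\an}(\nabla^H)\|^{\RS}\big)^2\cdot e^{-2\pi\,\im\eta(\nabla^H,g^M)},
\]
and then prove $\|\rho_{\an}(\widetilde{\nabla}^H)\|^{\RS}=1$ directly. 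That last equality is where the real work is: $\widetilde{\nabla}$ admits a compatible flat Hermitian metric, so $\B^{\widetilde H}$ is self-adjoint, and one runs a deformation argument (adapted from \cite[Section~11, pp.~205--211]{BK3}) comparing $\xi_\lambda$, $\|\rho_{\Gamma_{[0,\lambda]}}\|_\lambda$, and $\trs_{(\lambda,\infty)}$ along a path in the Hermitian locus. Your outline never reaches this reduction, and without it the argument does not close.
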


The rest of this section is concerned with the proof of Theorem \ref{T:comthm1}.

\subsection{Comparison between the twisted Ray-Singer metrics associated to a connection and to its dual}
We assume that $\theta \in (-\pi/2,0)$ is any Agmon angle for the twisted odd signature operator $\B^H$ such that no eigenvalue of $\B^H_{(\lambda,\infty)}$ lies in the solid angles $L_{[-\theta-\pi,-\pi/2]}$, $L_{(-\pi/2,\theta]}$, $L_{[-\theta,\pi/2)}$ and $L_{(\pi/2,\theta+\pi]}$, cf. \cite[Subsection 11.4]{BK3}

As in Subsection \ref{SS:duaconne1}, let $\nabla'$ be the connection dual to $\nabla$ with respect to the Hermitian metric $h^E$ and let $E'$ denote the flat bundle $(E,\nabla')$. Let $H$ be an odd degree closed form, other than one form, on $M$ and denote by $\nabla^H:=\nabla + H \wedge \cdot$. Let
\[
{\Delta'}^H=({\nabla'^H})^* \nabla'^H + \nabla'^H (\nabla'^H)^*,
\]
denote the twisted Laplacian of the connection $\nabla'$ twisted by the form $H$. For any $\lambda \ge 0$, we denote by
\[
\check{\Omega}^\bullet_{[0,\lambda]}(M,E') \subset \Omega^\bullet(M,E')
\]
the image of the spectral projection $\Pi_{\Delta'^H,[0,\lambda]}$, cf. Subsection \ref{SS:gdto}. As in Subsection \ref{SS:twrst1}, we use the scalar product induced by $g^M$ and $h^E$ on $\check{\Omega}^\bullet_{[0,\lambda]}(M,E')$ to construct a metric $\| \cdot \|'_\lambda$ on $\Det \big( H^\bullet(M,E',H) \big)$ and we define the twisted Ray-Singer metric on $\Det \big( H^\bullet(M,E',H) \big)$ by the formula
\begin{equation}\label{E:metricdual}
\| \cdot \|^{\RS}_{\Det \big( H^\bullet(M,E',H) \big)}:= \| \cdot \|'_\lambda \cdot \trs_{(\lambda,\infty)}(\nabla'^H), \quad \lambda \ge 0.
\end{equation}
As the untwisted case, cf. \cite[Subsection 11.6]{BK3}, we have the following identification,
\[
\check{\Omega}^\bullet_{[0,\lambda]}(M,E') \cong \check{\Omega}^{m-\bullet}_{[0,\lambda]}(M,E)^*,
\]
which preserves the scalar products induced by $g^M$ and $h^E$ on $\check{\Omega}^\bullet_{[0,\lambda]}(M,E')$ and $\check{\Omega}^{m-\bullet}_{[0,\lambda]}(M,E)^*$. Hence, the anti-linear isomorphism $\alpha$, cf. \eqref{E:antiiso}, is an isometry with respect to the metrics $\|\cdot \|_\lambda$ and $\|\cdot \|'_\lambda$. In particular,
\[
\|\rho_{\an}(\nabla^H) \|_\lambda = \|\alpha(\rho_{\an}(\nabla^H)) \|'_\lambda.
\] 
It follows from \eqref{E:aloh15} that 
\begin{equation}\label{E:rholamim}
\|\rho_{\an}(\nabla^H) \|_\lambda = \| \rho_{\an}({\nabla}'^H) \|'_\lambda \cdot e^{2\pi \operatorname{Im} \eta(\nabla^H,g^M)}.
\end{equation}

We need the following lemma. For untwisted case, cf. for example \cite[Lemma 8.8]{BK1}.
\begin{lemma}
\begin{equation}\label{E:trl32}
\trs_{(\lambda,\infty)}(\nabla'^H)=\trs_{(\lambda,\infty)}(\nabla^H).
\end{equation}
\end{lemma}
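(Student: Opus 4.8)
The plan is to reduce the identity $\trs_{(\lambda,\infty)}(\nabla'^H)=\trs_{(\lambda,\infty)}(\nabla^H)$ to a statement about $\zeta$-regularized determinants of the twisted Laplacians restricted to high-frequency subspaces, and then exploit the fact that the two twisted Laplacians are adjoint to each other. Recall from \eqref{E:trs11} that
\[
\trs_{(\lambda,\infty)}(\nabla^H)=\exp\Big(\tfrac12\sum_{k=0,1}(-1)^{k+1}\LDet'_{-\pi}\big({\nabla^H}^*\nabla^H\big)\big|_{\Omega^{\bar k}_{(\lambda,\infty)}(M,E)}\Big),
\]
and similarly for $\nabla'^H$ on $\Omega^{\bar k}_{(\lambda,\infty)}(M,E')$. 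So it suffices to show that for each $k=0,1$,
\[
\LDet'_{-\pi}\big((\nabla'^H)^*\nabla'^H\big)\big|_{\Omega^{\bar k}_{(\lambda,\infty)}(M,E')}=\LDet'_{-\pi}\big({\nabla^H}^*\nabla^H\big)\big|_{\Omega^{\bar k}_{(\lambda,\infty)}(M,E)}.
\]

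First I would record the basic algebraic relations. By \eqref{E:nahdu1} we have $(\nabla^H)^*=\Gamma\nabla'^H\Gamma$ and $(\nabla'^H)^*=\Gamma\nabla^H\Gamma$, where $\Gamma$ is the (metric) chirality operator, so that ${\nabla^H}^*\nabla^H=\Gamma\nabla'^H\Gamma\nabla^H$ and, conjugating by $\Gamma$ and using $\Gamma^2=\mathrm{Id}$,
\[
\Gamma\big({\nabla^H}^*\nabla^H\big)\Gamma=\nabla'^H\Gamma\nabla^H\Gamma=\nabla'^H(\nabla'^H)^*.
\]
Since $\Gamma$ interchanges $\Omega^{\bar k}$ and $\Omega^{\overline{k+1}}$ and commutes with $\B^H$ (hence preserves the splitting into $\Omega^\bullet_{(\lambda,\infty)}$), conjugation by $\Gamma$ gives an isomorphism of $\Omega^{\bar k}_{(\lambda,\infty)}(M,E)$ with $\Omega^{\overline{k+1}}_{(\lambda,\infty)}(M,E)$ intertwining ${\nabla^H}^*\nabla^H$ with $\nabla^H{\nabla^H}^*$, which shows that the two partial Laplacians $\Delta^H_{\bar k}$ have the same nonzero spectrum with multiplicities across the two $\mathbb{Z}_2$-degrees; but more to the point, I would instead use the dual structure. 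The identification $\check\Omega^\bullet_{[0,\lambda]}(M,E')\cong\check\Omega^{m-\bullet}_{[0,\lambda]}(M,E)^*$ recalled just below (the twisted analogue of \cite[Subsection 11.6]{BK3}), together with $(\Delta^H)^*={\Delta'}^H$ under this identification (which follows from \eqref{E:nahdu2}, since $(\B^H)^*=\B'^H$ and $(\B^H)^2$, $(\B'^H)^2$ are the relevant Laplacians up to the sign conventions), shows that $\nabla'^H(\nabla'^H)^*$ on $\Omega^{\bar k}(M,E')$ is unitarily equivalent (via an anti-linear isometry) to ${\nabla^H}^*\nabla^H$ on $\Omega^{\overline{k+1}}(M,E)$ — and similarly with the roles of $+$ and $-$ parts swapped. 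In either route, the conclusion is that the nonzero eigenvalues of ${\nabla'^H}^*\nabla'^H$ restricted to $\Omega^{\bar k}_{(\lambda,\infty)}(M,E')$ coincide, with multiplicities, with those of ${\nabla^H}^*\nabla^H$ restricted to $\Omega^{\bar k}_{(\lambda,\infty)}(M,E)$ — here one uses that the nonzero eigenvalues of ${\nabla^H}^*\nabla^H$ on $\Omega^{\bar k}$ equal the nonzero eigenvalues of $\nabla^H{\nabla^H}^*$ on $\Omega^{\overline{k+1}}$ (a standard consequence of $\nabla^H$ being a quasi-isomorphism between the acyclic subcomplexes), so that passing through $\Gamma$-conjugation returns the same $\mathbb{Z}_2$-degree. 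Since all these eigenvalues are real and positive, the Agmon angle $\theta=-\pi$ is admissible for all of them and the $\zeta$-functions agree termwise; hence their $\zeta$-regularized determinants agree, giving the displayed equality for each $k$ and therefore \eqref{E:trl32}.

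The main obstacle I anticipate is bookkeeping the $\mathbb{Z}_2$-degree correctly: a naive application of $\Gamma$-conjugation sends the $\bar k$ piece to the $\overline{k+1}$ piece, so one must pair it with the spectral identity "nonzero $\mathrm{spec}\,{\nabla^H}^*\nabla^H|_{\Omega^{\bar k}}=$ nonzero $\mathrm{spec}\,\nabla^H{\nabla^H}^*|_{\Omega^{\overline{k+1}}}$" to land back in degree $\bar k$, and one must check that these two identifications are mutually compatible (i.e. their composition is the identity on eigenvalues in degree $\bar k$) rather than accidentally reshuffling degrees. Once the degree-preservation is pinned down, the equality of the $\LDet'_{-\pi}$ terms is immediate since the underlying finite-multiplicity sequences of nonzero eigenvalues in $\mathcal{I}=(\lambda,\infty)$ are literally the same, and the alternating sum in \eqref{E:trs11} is preserved. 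Everything else — meromorphic continuation, regularity at $0$, independence of the representative of the heat-kernel expansion — is inherited from the general theory already invoked in the paper, so no new analytic input is needed.
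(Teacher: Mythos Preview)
Your argument is correct and follows essentially the same route as the paper: combine the $\Gamma$-conjugation identity $\Gamma\big((\nabla^H)^*\nabla^H\big)\Gamma=\nabla'^H(\nabla'^H)^*$ (which swaps $\bar k\leftrightarrow\overline{k+1}$) with the differential intertwining $(\nabla'^H)^*\nabla'^H\big|_{\bar k}\simeq\nabla'^H(\nabla'^H)^*\big|_{\overline{k+1}}$ on nonzero spectrum, so that the composite preserves the $\mathbb{Z}_2$-degree and yields equality of the $\LDet'_{-\pi}$ terms. One small slip: where you say $\Gamma$-conjugation intertwines $(\nabla^H)^*\nabla^H$ with $\nabla^H(\nabla^H)^*$, your own displayed computation shows it intertwines with $\nabla'^H(\nabla'^H)^*$, and the relevant spectral subspaces here are the $\Delta^H$-spectral spaces $\check\Omega^\bullet_{(\lambda,\infty)}$ (so the preservation under $\Gamma$ comes from $\Gamma\Delta^H\Gamma=\Delta'^H$, not from $\Gamma$ commuting with $\B^H$).
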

\begin{proof}
From \eqref{E:nahdu1}, we have
\begin{equation}\label{E:nahnainterw}
(\nabla^H)^*\nabla^H = \Gamma \nabla'^H \Gamma \nabla^H = \Gamma (\nabla'^H \Gamma \nabla^H \Gamma) \Gamma = \Gamma \nabla'^H (\nabla'^H)^* \Gamma.
\end{equation}
As in \eqref{E:splittt}, we have
\begin{equation}\label{E:splittt0}
\check{\Omega}^{\bar k}_{(\lambda,\infty)}(M,E) = \check{\Omega}^{\bar{k},-}_{(\lambda,\infty)}(M,E) \oplus \check{\Omega}^{\bar{k},+}_{(\lambda,\infty)}(M,E).
\end{equation}
The operator $\nabla^H$ maps $\check{\Omega}^{\bar{k},+}_{(\lambda,\infty)}(M,E)$ isomorphically onto $\check{\Omega}^{\overline{k+1},-}_{(\lambda,\infty)}(M,E)$ and
\begin{equation}\label{E:intewin2} 
\nabla^H\big|_{\check{\Omega}^{\bar{k},+}_{(\lambda,\infty)}(M,E)} \big((\nabla^H)^* \nabla^H\big) \big|_{\check{\Omega}^{\bar{k},+}_{(\lambda,\infty)}(M,E)} =  \big(\nabla^H (\nabla^H)^* \big|_{\check{\Omega}^{\overline{k+1},-}_{(\lambda,\infty)}(M,E)}\big) \cdot \nabla^H\big|_{\check{\Omega}^{\bar{k},+}_{(\lambda,\infty)}(M,E)}.
\end{equation}
Hence, by \eqref{E:intewin2}, we have 
\begin{equation}\label{E:interwi1}
\LDet'_{-\pi}\big(  (\nabla'^H)^* \nabla'^H   \big)\big|_{\Omega^{\bar{k},+}_{(\lambda,\infty)}(M,E)} =  \LDet'_{-\pi}\big(   \nabla'^H (\nabla'^H)^*  \big)\big|_{\Omega^{\overline{k+1},-}_{(\lambda,\infty)}(M,E)}.
\end{equation}
From \eqref{E:nahnainterw}, we obtain
\begin{align*}
&\log \trs_{(\lambda,\infty)}(\nabla^H)\\
 &= \frac{1}{2} \sum_{k=0,1}(-1)^{k+1} \LDet'_{-\pi}\big( (\nabla^H)^* \nabla^H  \big)\big|_{\Omega^{\bar{k},+}_{(\lambda,\infty)}(M,E)} \\
& = \frac{1}{2} \sum_{k=0,1}(-1)^{k+1} \LDet'_{-\pi}\big( \Gamma \nabla'^H (\nabla'^H)^* \Gamma \big)\big|_{\Omega^{\bar{k},+}_{(\lambda,\infty)}(M,E)} \\
& = \frac{1}{2} \sum_{k=0,1}(-1)^{k+1} \LDet'_{-\pi}\big(  \nabla'^H (\nabla'^H)^*  \big)\big|_{\Omega^{\overline{m-k},-}_{(\lambda,\infty)}(M,E)} \\
 &= \frac{1}{2} \sum_{k=0,1}(-1)^k \LDet'_{-\pi}\big(  \nabla'^H (\nabla'^H)^*  \big)\big|_{\Omega^{\bar{k},-}_{(\lambda,\infty)}(M,E)} \\
&= \frac{1}{2} \sum_{k=0,1}(-1)^{k+1} \LDet'_{-\pi}\big(  (\nabla'^H)^* \nabla'^H   \big)\big|_{\Omega^{\bar{k},+}_{(\lambda,\infty)}(M,E)},
\end{align*}
where we use \eqref{E:interwi1} for the last equality. This proves the lemma.
\end{proof}

Hence, from \eqref{E:metric}, \eqref{E:metricdual}, \eqref{E:rholamim} and \eqref{E:trl32}, we conclude that
\begin{equation}\label{E:rhoimet}
\|\rho_{\an}(\nabla^H) \|^{\RS}_{\Det \big( H^\bullet(M,E,H) \big)} = \| \rho_{\an}({\nabla}'^H) \|^{\RS}_{\Det \big( H^\bullet(M,E',H) \big)} \cdot e^{2\pi \operatorname{Im} \eta(\nabla^H,g^M)}.
\end{equation}

\subsection{Direct sum of a connection and its dual}
Let 
\[
\widetilde{\nabla} =  \left(
\begin{array}{cc}
   \nabla & 0  \\
   0 & \nabla' 
\end{array}
\right)
\]
denote the flat connection on $E \oplus E$ obtained as a direct sum of the connections $\nabla$ and $\nabla'$. Denote by $$\widetilde{\nabla}^H:= \left(
\begin{array}{cc}
   \nabla^H & 0  \\
   0 & \nabla'^H 
\end{array}
\right).$$ 
Then a discussion similar to \cite[Subsection 11.7]{BK3}, where the untwisted case was treated, one easily obtains that, 
\[
\rho_{\an}(\widetilde{\nabla}^H)= \mu_{H^\bullet(M,E,H),H^\bullet(M,E',H)}\big( \rho_{\an}(\nabla^H) \otimes \rho_{\an}(\nabla'^H) \big)
\]
and
\[
\| \rho_{\an}(\widetilde{\nabla}^H) \|^{\RS}_{\Det \big( H^\bullet(M,E \oplus E',H \big)}=\| \rho_{\an}(\nabla^H) \|^{\RS}_{\Det \big( H^\bullet(M,E,H \big)} \cdot \| \rho_{\an}(\nabla'^H) \|^{\RS}_{\Det \big( H^\bullet(M, E',H \big)}.
\]
Combining this later equality with \eqref{E:rhoimet}, we get
\[
\| \rho_{\an}(\widetilde{\nabla}^H) \|^{\RS}_{\Det \big( H^\bullet(M,E \oplus E',H \big)} = \big( \| \rho_{\an}(\nabla^H) \|^{\RS}_{\Det \big( H^\bullet(M,E,H \big)} \big)^2 \cdot e^{-2\pi \operatorname{Im} \eta(\nabla^H,g^M)}.
\]
Hence, \eqref{E:comthm2} is equivalent to the equality
\begin{equation}\label{E:comrart}
\| \rho_{\an}(\widetilde{\nabla}^H) \|^{\RS}_{\Det \big( H^\bullet(M,E \oplus E',H \big)} = 1.
\end{equation}
By a slight modification of the deformation argument in \cite[Section 11, P.205-211]{BK3}, where the untwisted case was treated, we can obtain \eqref{E:comrart}. Hence, we finish the proof of Theorem \ref{T:comthm1}.

\end{document}